\newcommand{\given}{\bigm|}
\newcommand{\natsz}{\ensuremath{\mathds{N}_0}\xspace}
\newcommand{\CM}{\texttt{Cle\-ver\-Ma\-ker}\xspace}
\newcommand{\RM}{\texttt{Ran\-dom\-Ma\-ker}\xspace}
\newcommand{\CB}{\texttt{Cle\-ver\-Br\-eaker}\xspace}
\newcommand{\RB}{\texttt{Ran\-dom\-Br\-eaker}\xspace}
\newcommand{\size}[1]{\left|#1\right|}
\newcommand{\prob}[1]{ Pr \left[#1\right]}
\newcommand{\Hgraph}[1]{ H\left(#1\right)}
\newcommand{\HsigmaS}[1]{ H_{\sigma,S}\left(#1\right)}
\newcommand{\G}[1]{ G\left(#1\right)}
\newcommand{\Gsigma}[2]{ G_{#1}\left(#2\right)}
\declaretheorem[numberwithin=section, name=Theorem]{thm}
\declaretheorem[sibling=thm, name=Lemma]{lem}
\declaretheorem[sibling=thm, name=Proposition]{prop}
\declaretheorem[sibling=thm, name=Corollary]{cor}
\declaretheorem[sibling=thm, style=definition, name=Definition]{defin}
\title{Sharp thresholds for half-random games I}
\date{}
\author{Jonas Groschwitz\\ Universit\"at Potsdam  \and Tibor Szab\'o \\ Freie Universit\"at Berlin}
\begin{document}

\maketitle

\begin{abstract}
We study biased Maker-Breaker positional games between two players, one of whom is playing 
randomly against  an opponent with an optimal strategy. 
In this paper we consider the scenario when Maker plays randomly and
Breaker is ``clever'', and determine the sharp threshold bias of classical graph
games, such as connectivity, Hamiltonicity, and minimum degree-$k$. 
We treat the other case, that is when Breaker plays randomly, in a separate paper.

The traditional, deterministic version of these games, with two optimal players playing,
are known to obey the so-called probabilistic intuition. That is,
the threshold bias of these games is asymptotically equal to the threshold bias of their random counterpart, where players just 
take edges uniformly at random.
We find, that despite this remarkably precise agreement of the results of the deterministic and the random games, 
playing randomly against an optimal opponent is not a good idea: the
threshold bias becomes significantly more tilted towards the random player. 
An important qualitative aspect of the probabilistic intuition carries through nevertheless: the bottleneck for Maker to occupy a connected graph 
is still the ability to avoid isolated vertices in her graph.
\end{abstract}

\section{Introduction} 
Let us be given a finite hypergraph ${\cal F} \subseteq 2^X$ on a vertex set $X$.
In the Maker-Breaker positional game ${\cal F}$ two players, Maker and 
Breaker, alternately take turns in occupying free elements of $X$,
with Maker going first, until no free element is left.
Maker is the winner if he completely occupied a hyperedge of the hypergraph ${\cal F}$, 
otherwise Breaker wins. 
Such a game is of perfect information with no chance moves, so one of
the players has a winning strategy. That which one, depends on the hypergraph ${\cal F}$. 
A standard method, introduced by Chv\'atal and Erd\H os~\cite{chvaterd78}, to measure 
the robustness of this winning strategy 
is to give the ``disadvantaged'' player a {\em bias}, that is to allow him 
to occupy more than one element of $X$ in each turn. 
In an $(a:b)$ {\em biased game} Maker occupies $a$ elements of $X$ in each turn 
and Breaker occupies $b$ elements. 

For our investigation we will be concerned mostly with graph games, where the 
{\em board} $X$ is the edge set $E(K_n)$ of the complete graph and the game hypergraph
${\cal P} \subseteq 2^{E(K_n)}$ describes a graph property. 
In the present paper we study properties fundamental both in terms of
graph theory and  positional games. These include connectivity, having a perfect matching, Hamilton cycle or 
minimum degree $k$. For this, let ${\cal C}(n)$, ${\cal H}(n)$, ${\cal D}_k(n)$ denote the family of edge sets of $n$-vertex graphs
that are connected, contain a Hamiltonian cycle, have minimum
degree $k$, respectively. Subsequently we suppress the parameter $n$ in the notation.

\subsection{Threshold bias and probabilistic intuition}
As it turns out, many of the natural graph games are relatively easy wins for Maker
if the game is played $(1:1)$. 
Chv\'{a}tal and Erd\H{o}s~\cite{chvaterd78} were the first to study 
how large of a bias $b$ Breaker needs in various graph games in order to win
the $(1:b)$ biased game.
For a game hypergraph ${\cal F}$ we define $b_{\cal F}$ to be the smallest integer $b$ 
such that Breaker has a winning strategy in the $(1:b)$ biased game ${\cal F}$ and 
$b_{\cal F}$ is called the \textit{threshold bias} of the game. 

Chv\'{a}tal and Erd\H{o}s \cite{chvaterd78} determined the order of magnitude of the 
threshold bias of the connectivity game ${\cal C}$ and the triangle building game 
${\cal K}_{K_3}$. They have shown that $b_{\cal C} = \Theta \left(\frac{n}{\ln n}\right)$ 
and $b_{{\cal K}_{K_3}} = \Theta \left( \sqrt{n}\right)$. The constant factor
in the lower bound for $b_{\cal C}$ was first improved by Beck~\cite{beck82}.
Later Gebauer and Szab\'{o} \cite{gebsza09} established 
$b_{\cal C} = (1+o(1)) \frac{n}{\ln n}$, showing that the upper bound of Chv\'atal and 
Erd\H os is asymptotically tight.
For the Hamiltonicity game ${\cal H}$ Chv\'atal and Erd\H os only showed that 
$b_{\cal H} > 1$. This was subsequently improved in a series of papers by 
Bollob\'as and Papaioannou~\cite{BollobasPapa},
Beck~\cite{BeckHamilton}, Krivelevich and Szab\'o~\cite{SZKHamilton}, 
until Krivelevich \cite{kriv09} proved that $b_{\cal H} = (1+o(1)) \frac{n}{\ln n}$.
In other words, building a Hamiltonian cycle is possible for
Maker against essentially the same bias as building just a connected graph.

Erd\H os and Chv\'atal's winning strategy for Breaker in the
connectivity game actually isolates a vertex of Maker's graph, and
thus wins the minimum degree-$1$ game as well. Further, since a win
for Maker in the connectivity game also is a win for him in the
minimum degree-$1$ game, the results for Maker's win of the
connectivity game carry over. 
Thus, in the minimum degree-$1$ game too, the threshold bias is asymptotically equal to $\frac{n}{\ln{n}}$.
The message of this is that in positional games, having an isolated
vertex turns out to be the bottleneck for having a connected graph.
This phenomenon is familiar from the theory of random graphs,
where  Erd\H os and R\'enyi established that the sharp threshold edge number
to have a connected graph in the uniform random graph
model $\mathcal{G}(n,m)$ is the same as the one to have 
a graph with minimum degree $1$.

In fact, as already Chv\'atal and Erd\H os realized, the similarities between
random graphs and positional games are even
closer. In a positional game players are playing ``cleverly'', according to
optimal strategies and exactly one of the players has a deterministic
winning strategy, which wins against {\em any} strategy of the other player.
The situation is different if both players play ``randomly'', that is,
if both Maker and Breaker determine their moves by 
picking a uniformly random edge out of the currently free edges; then we can only talk about the ``typical'' result of the game.
The graph of this \RM will be a uniform random graph $\mathcal{G}(n,m)$ with
$m= \left\lceil \frac{{n\choose 2}}{b+1}\right\rceil$ edges. Therefore
\RM wins a particular game involving graph property ${\cal P}$
asymptotically almost surely (a.a.s.) if and only if the random graph $\mathcal{G}(n,m)$ possesses
property ${\cal P}$ a.a.s. 
Hence the classic theorem of Erd\H os and R\'enyi about the sharp connectivity threshold 
in random graphs can be reformulated in positional game theoretic terms.

\begin{thm}[Erd\H{o}s-R\'enyi, \cite{erdrenyi59}]\label{thmErdRen}
For every  $\epsilon>0$, the following holds.
\begin{enumerate}[(i)]
\item $\prob{ \mbox{\RM wins the }\left(1:(1-\epsilon)\frac{n}{\ln n}\right) \mbox{ connectivity game ${\cal C}$ } } \rightarrow 1, $
\item $\prob{ \mbox{\RB wins the }\left(1:(1+\epsilon)\frac{n}{\ln n}\right)\mbox{ connectivity game ${\cal C}$} } \rightarrow 1.$
\end{enumerate}
\end{thm}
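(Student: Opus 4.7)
The plan is to reduce both statements to the classical Erd\H os--R\'enyi connectivity theorem for $\mathcal{G}(n,m)$. As already noted in the paragraph preceding the theorem, when both players act uniformly at random the set of edges eventually claimed by \RM\ is a uniformly random $m$-subset of $E(K_n)$ with $m = \lceil \binom{n}{2}/(b+1)\rceil$, hence distributed as $\mathcal{G}(n,m)$. Since $b$ is of order $n/\ln n$, a short arithmetic check gives $m = (1\pm\delta)\frac{n\ln n}{2}$, where the $+$ sign corresponds to $b = (1-\epsilon)n/\ln n$ in~(i), the $-$ sign to $b = (1+\epsilon)n/\ln n$ in~(ii), and $\delta = \delta(\epsilon) > 0$. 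It therefore suffices to prove the two corresponding statements for $\mathcal{G}(n,m)$.

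First I would treat part~(ii). Let $X$ denote the number of isolated vertices of $\mathcal{G}(n,m)$. The probability that a fixed vertex is isolated equals $\binom{\binom{n}{2}-(n-1)}{m}/\binom{\binom{n}{2}}{m}$, which by the standard asymptotic estimate is $(1+o(1))\,e^{-2m/n}$. Plugging in $m=(1-\delta)\frac{n\ln n}{2}$ yields $\mathbb{E}[X] = n^{\delta + o(1)} \to \infty$. A routine second moment computation, using that the events ``$u$ isolated'' and ``$v$ isolated'' are only mildly correlated because $m \ll \binom{n}{2}$, then gives $\mathrm{Var}(X) = o(\mathbb{E}[X]^2)$, so $X > 0$ a.a.s.\ by Chebyshev's inequality, and the graph is disconnected.

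For part~(i) the same first moment now gives $\mathbb{E}[X] = n^{-\delta + o(1)} \to 0$, so by Markov's inequality $\mathcal{G}(n,m)$ a.a.s.\ has no isolated vertex. The main remaining, and technically most delicate, step is to rule out connected components of size $k$ for $2 \leq k \leq n/2$. For this I would estimate the expected number of such components by the union bound over $k$-subsets $S$ of $V(K_n)$ and spanning trees of $S$: it is at most $\binom{n}{k} k^{k-2} \binom{\binom{n}{2}-k(n-k)}{m-(k-1)}/\binom{\binom{n}{2}}{m}$, and the classical two-regime split at $k = O(\ln n)$ versus $k = \omega(\ln n)$ shows that the sum over all such $k$ is $o(1)$. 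Together with the absence of isolated vertices, this shows that $\mathcal{G}(n,m)$ is a.a.s.\ connected, completing part~(i).
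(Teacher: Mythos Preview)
The paper does not give its own proof of this statement: Theorem~\ref{thmErdRen} is presented as a reformulation of the classical Erd\H os--R\'enyi connectivity threshold and is simply cited from~\cite{erdrenyi59}. The only argument the paper supplies is the observation (in the paragraph preceding the theorem) that when both players move uniformly at random, \RM's final graph is distributed as $\mathcal{G}(n,m)$ with $m=\lceil\binom{n}{2}/(b+1)\rceil$, which reduces the game statement to the random-graph statement.

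Your proposal carries out exactly this reduction and then sketches a direct proof of the Erd\H os--R\'enyi theorem itself via the standard first/second moment analysis of isolated vertices together with a union bound over small components. That sketch is correct and is essentially the classical argument; the paper neither needs nor provides more, since it treats the result as a black box.
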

By this theorem the threshold biases of both the random
connectivity game and the clever connectivity game are
$(1+o(1))\frac{n}{\ln n}$. This remarkable agreement means that for
most values of the bias the {\em result} of the random and the clever
game is the same a.a.s. This phenomenon is refered to as 
the {\em probabilistic intuition}.
Since similar random graph theorems also hold true for the properties
of Hamiltonicity~\cite{ajtai1985first} and having  
minimum degree $1$~\cite{erdrenyi59}, these games are also
instances where the probabilistic intuition is valid. 
One of the main directions of research in positional game theory 
constitutes of understanding what games obey the probabilistic intuition.

\subsection{Half-Random Games}\label{secHalfRandomGames}

The meaning of the probabilistic intuition is that given any
bias $b\leq (1-\epsilon) b_{\cal P}$ or $b\geq (1+\epsilon ) b_{\cal
  P}$, one could predict the winner of the ``clever'' $(1:b)$-game 
${\cal P}$ just by
running random experiments with two random players playing each
other:
whoever wins in the majority of these random games is very 
likely to have the winning strategy in the deterministic game between
the clever players.

When learning about this interpretation, it is natural to inquire
whether it is just the success of the randomized strategy in 
the clever game what is behind the whole phenomenon.
Could it be that when Maker plays uniformly at random against Breaker, who plays with 
a bias near to the threshold, then this \RM wins with high probability? 
In this paper we give precise quantitative evidence that the answer
to this question is negative. We will see that in all the games discussed
above, the random player puts himself in serious disadvantage with
playing randomly as opposed to a clever strategy. 

In what follows we investigate {\em half-random positional games}, where one of the players
plays according to the uniform random strategy against an optimal
player. There are two versions: either
Maker follows a strategy and Breaker's moves are determined randomly, or the other way around. 
We refer to the players as \CM / \RB, and \RM / \CB, respectively. 
In this paper we focus on the \RM versus \CB setup. Our approach to the other case
requires mostly different combinatorial methods and is treated in a separate paper \cite{THRG2}.

Below we define the notion of a sharp threshold bias for \RM / \CB games. For this, when we talk about a game, we
actually mean a {\em sequence of games}, parametrized with the size $n$ of the vertex set of the
underlying graph.  Similarly, when we refer to a strategy of \CB, we mean a
{\em sequence of strategies}.

It will turn out that when Maker plays randomly, the disadvantage of making random moves
outweighs his huge advantage  inherent in the $(1:1)$ games, and the half-random
bias  needs to tilt in his favor. This motivates the following
definition.

\begin{defin}\label{def:sharpThreshold}
We say a function $k:\natsz\mapsto\natsz$ is a
\textit{sharp threshold bias} of the $(a:1)$ half-random positional game between \RM and 
\CB, if for every $\epsilon > 0$ it satisfies the following two conditions
\begin{enumerate}[(a)]
\item \RM wins the $((1+\epsilon) k(n):1)$-biased game a.a.s. 
against any strategy of \CB, and 
\item \CB has a strategy against which \RM loses the \linebreak[4]$\left(\left(1-\epsilon\right)k(n):1\right)$-biased 
game a.a.s.
\end{enumerate}
\end{defin}

{\bf Remarks.} 
{\bf 1.} 
Our paper is mostly about the failure of the uniformly random strategy
against a clever player in various classical graph games. There are
other natural games where the situation is completely different and 
the uniformly random strategy is close to being optimal.
Bednarska and \L uczak \cite{bednarska2000biased} consider the $H$-building game 
${\cal K}_H$, where Maker's goal is to occupy a copy of a fixed graph $H$.  
Even though their paper is about the classical game scenario with clever players,
their results also imply that the half-random 
$(1:b)$ $H$-game ${\cal K}_H$ between \RM and \CB
has a threshold bias around $n^{\frac{1}{m_2(H)}}$, where 
$m_2(H) = \max_{K\subseteq H, v(K)\geq 3}\frac{e(K)-1}{v(K)-2}$. 
Bednarska and \L uczak not only prove that \RM succeeds against 
a bias $cn^{\frac{1}{m_2(H)}}$ for some small constant $c$ a.a.s., but also that 
even a \CM would not be able to do much better.
That is, they give a strategy for \CB  to prevent
the creation of $H$ by \CM with a bias $Cn^{\frac{1}{m_2(H)}}$, 
where $C$ is some large constant. 
The $H$-game is an instance of a game where the threshold
bias for the clever game is of the same order of magnitude as for the half-random game
--- very much unlike the games we consider in this paper.

{\bf 2.} Half-random versions of other positional games were
also considered earlier in different context. The well-studied notion
of an Achlioptas process can be cast as the {\tt RandomWaiter}-{\tt CleverClient} version of 
the classic Picker-Chooser games introduced by
Beck~\cite{BeckCombinatorica-2002} (and renamed recently to
Waiter-Client by Bednarska-Bzdega, Hefetz, \L uczak~\cite{BHL}). In a $(1:1)$
Waiter-Client game the player Waiter chooses two, so far unchosen edges of $K_n$
and offers them to the player called Client, who selects one of them
into his graph. Waiter wins when Client's graph has property ${\cal P}$.
A substantial amount of work~\cite{BohmanFrieze, RiordanWarnke} was
focused on determining 
{\em how long} does it take for {\tt RandomWaiter} to win when the
property ${\cal P}$ is to have a connected component of linear size. 
Bohman and Frieze~\cite{BohmanFrieze} gave a simple strategy for 
{\tt CleverClient} to significantly delay the win of {\tt
  RandomWaiter} 
compared to the well-known threshold from the work of Erd\H os and R\'enyi 
in the game where both players play randomly.

\subsection{Results}
We first show that if $a\leq(1-\epsilon) \ln{\ln{n}}$, then a simple and natural strategy of \CB 
allows him to isolate a vertex in \RM's graph a.a.s., and therefore win the degree-$1$ game. 
Then we establish that this threshold is asymptotically tight for all the games 
we are considering in this paper.
\begin{thm}\label{thm:RMCB-mindegree-k} Let $k$ be a positive integer.
The sharp threshold bias for the $(a:1)$  minimum degree-$k$
game between \RM and \CB is $\ln\ln n$. 
\end{thm}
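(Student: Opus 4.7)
The theorem has two halves: \textbf{(a)} for $a \geq (1+\epsilon)\ln\ln n$ every strategy of \CB fails against \RM a.a.s., and \textbf{(b)} for $a \leq (1-\epsilon)\ln\ln n$ some \CB strategy succeeds a.a.s. Since an isolated vertex violates the minimum-degree-$k$ property for every $k \geq 1$, part (b) reduces at once to the case $k = 1$: I need to exhibit a \CB strategy that a.a.s.\ isolates a vertex of \RM's graph.

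\textbf{Plan for (b).} I would use the natural ``focused isolation'' strategy: maintain a target vertex $v$ currently untouched by \RM; in each turn take a free edge at $v$; if \RM's move touches $v$, switch the target to any other currently untouched vertex. Given a fresh $v$, the probability that \RM misses $v$ throughout the $n-1$ turns needed to acquire all $v$-edges is (via the standard Poisson-type estimate with $M_t\sim\binom{n}{2}$)
\[
\prod_{t=1}^{n-1}\left(1 - \tfrac{n-t}{M_t}\right)^{\!a} = (1+o(1))\,e^{-a}.
\]
Each failed attempt terminates in $\Theta(n/a)$ turns in expectation (the waiting time until \RM first hits $v$), and after $m$ \RM-edges the number of vertices still untouched concentrates around $n e^{-2m/n}$, remaining positive until $m \sim n(\ln n)/2$. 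So \CB can carry out $\Omega(\ln n / a)$ approximately-independent fresh attempts, yielding
\[
\Pr[\text{all attempts fail}] \leq \exp\!\left(-\Omega\!\left(\tfrac{e^{-a}\ln n}{a}\right)\right) \to 0,
\]
since for $a = (1-\epsilon)\ln\ln n$ we have $e^{-a}\ln n = (\ln n)^\epsilon \to \infty$.

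\textbf{Plan for (a).} Fix any \CB strategy and write $X_v, Y_v$ for the final degrees of $v$ in \RM's, \CB's graphs. Since $X_v + Y_v = n-1$, the event $X_v < k$ forces $Y_v \geq n - k$; since \CB has only $T \sim n^2/(2a)$ moves, at most $O(n/a)$ vertices can be ``attempted'' in this sense. I index \CB's attempts by $i=1,2,\dots$ with start times $t_i$ and decompose $X_{v_i} = r_0(v_i)+\Delta_i$, where $r_0(v_i)$ is \RM's edge-count at $v_i$ before time $t_i$ and $\Delta_i$ is \RM's count during the focused phase. The same calculation as above shows $\Delta_i$ is approximately $\mathrm{Poi}(a)$-distributed, so $\Pr[\Delta_i < k] \leq (1+o(1))\tfrac{a^{k-1}}{(k-1)!} e^{-a}$. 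The key extra observation is that only $O(\ln n/a)$ attempts can have a \emph{fresh} target ($r_0(v_i) = 0$), because once $at_i \gtrsim n(\ln n)/2$ the expected number of untouched vertices is $o(1)$. Summing the per-attempt bound over fresh attempts yields
\[
O\!\left(\tfrac{\ln n}{a}\right)\cdot \tfrac{a^{k-1}}{(k-1)!}\,e^{-a} = O\!\left(\tfrac{(\ln\ln n)^{k-2}}{(\ln n)^\epsilon}\right) = o(1),
\]
and the at most $O(\ln\ln n/a)$ ``stale'' attempts with $r_0(v_i) = j \in \{1,\dots,k-1\}$ contribute analogously (with $\Pr[\Delta_i < k-j]$ replacing the tail).

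\textbf{Main obstacle.} The principal technical challenge in both directions is the adaptivity of \CB and the dependence among attempts that share \RM's random history. In (b) I would formalize the near-independence of successive attempts by conditioning on the set of vertices \RM has touched so far and using a second-moment estimate on the number of successful isolations. In (a) the delicate point is that \CB may always pick as target the vertex of currently minimum \RM-degree; one must verify that this adaptive choice does not spoil the per-attempt Poisson tail. This follows because in the non-fresh regime ($\mu_i = 2at_i/n \geq \ln n$) the pool of vertices with $r_0 \leq k-1$ itself has expected size only $O(n\mu_i^{k-1} e^{-\mu_i})$, which keeps the total sum of order $o(1)$.
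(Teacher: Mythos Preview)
Your plan for part (b) is essentially the paper's: \CB repeatedly tries to isolate a vertex, the per-attempt success probability is of order $e^{-a}$, and there are enough attempts that one succeeds a.a.s. The paper formalizes the ``near-independence'' not via a second-moment argument but by conditioning on the event that the random graph $G_\sigma((a+1)nk)$ has an isolated vertex and proving a technical lemma (their Lemma~3.4) that under this conditioning each \RM move is still almost uniform. Your sketch glosses over this conditioning, but the idea is right.

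Part (a), however, has a genuine gap. Your ``attempts'' framework implicitly imposes structure on \CB's strategy that an arbitrary strategy need not have. Concretely, your count of ``only $O(\ln n/a)$ fresh attempts'' is not justified: \CB can play one edge at each of $\Theta(n/a)$ distinct vertices during the first $n/(2a)$ rounds (when almost all vertices are still untouched by \RM), and then spend the rest of the game completing those vertices to \CB-degree $n-k$. All $\Theta(n/a)$ such vertices are ``fresh'' at their start time $t_i$ in your sense. Your union bound then becomes $\Theta(n/a)\cdot a^{k-1}e^{-a}=\Theta\bigl(n(\ln\ln n)^{k-2}(\ln n)^{-(1+\epsilon)}\bigr)$, which diverges. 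The heuristic ``each attempt costs $\Theta(n)$ rounds, so only $T^*/n=O(\ln n/a)$ fit before time $T^*$'' fails because attempts need only \emph{start}, not finish, in the fresh window.

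The paper avoids this by not working at the end of the game. It truncates at round $t=\frac{\alpha}{2}n\ln n$; in that window \CB has played only $t$ edges, so at most $\ln n$ vertices can have \CB-degree $\geq 3\alpha n$ (``bad'' vertices). For these $\leq\ln n$ bad vertices it builds an auxiliary digraph of ``saviour'' \RM-edges and shows each bad vertex acquires out-degree $k$ (Lemma~3.6, Corollary~3.7); the union bound over $\ln n$ vertices now works. For the remaining ``good'' vertices, \CB has blocked at most $3\alpha n$ incident edges, so \RM's graph there is sandwiched close to $G(n,at)$ and has large minimum degree by standard random-graph estimates (Lemmas~3.10--3.12). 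The essential idea you are missing is this early truncation, which keeps the number of problematic vertices polylogarithmic rather than $\Theta(n/a)$.
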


\begin{thm}\label{thm:RMCB-connectivity}
The sharp threshold bias for the $(a:1)$  connectivity game between \RM and \CB is $\ln\ln n$. 
\end{thm}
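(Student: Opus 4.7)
For the lower bound, I plan to appeal directly to Theorem~\ref{thm:RMCB-mindegree-k} with $k=1$: the \CB strategy it provides at bias $(1-\epsilon)\ln\ln n$ a.a.s.\ leaves an isolated vertex in \RM's graph, which forces the graph to be disconnected and hence is already a winning strategy for \CB in the connectivity game.

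For the upper bound, I fix $a=(1+\epsilon)\ln\ln n$ and an arbitrary strategy $\sigma$ of \CB, and aim to show that \RM's graph $G_M$ is connected a.a.s. Now $G_M$ is disconnected if and only if some proper non-empty $S \subseteq V$ has every edge of its cut in Breaker's graph (call such an $S$ a \emph{bad set}), so the plan is to bound the probability that a bad set exists, split by size $s = |S| \leq n/2$ into two regimes.

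For \emph{small} $s$, a bad set forces every vertex of $S$ to have all its Maker-neighbours inside $S$, and hence Maker-degree at most $s-1$. Thus, whenever $G_M$ has minimum degree at least $k(n)$, no bad set of size $\leq k(n)$ can exist. For \emph{larger} $s$ I would use a direct probabilistic bound: writing $C_t$ and $F_t$ for the numbers of free cut-edges and of total free edges at the start of round $t$, the probability that \RM's $a$ uniform picks avoid the cut in round $t$ is at most $(1-C_t/F_t)^a \leq \exp(-aC_t/F_t)$. Since $F_t \leq \binom{n}{2}$ throughout and, under the conditioning that \RM misses the cut in every round, $C_t$ drops by at most $1$ per round, one verifies that $\sum_t aC_t/F_t = \Omega(as^2)$ regardless of $\sigma$---the worst case being Breaker prioritising the cut. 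This gives $\prob{S\text{ bad}} \leq \exp(-\Omega(as^2))$ for each $S$, and union-bounding $\binom{n}{s}\leq n^s$ over $s > k(n)$ yields $o(1)$ provided $a\,k(n) \gg \ln n$.

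The main obstacle will be reconciling the two regimes: the union bound forces $k(n) = \Omega(\ln n/\ln\ln n)$, which is well beyond the fixed $k$ guaranteed by Theorem~\ref{thm:RMCB-mindegree-k} as stated. The technical heart of the argument is thus to extend the minimum-degree result to such a slowly growing $k$, exploiting the fact that Breaker's total budget of $\binom{n}{2}/(a+1)$ edges limits how many vertices he can simultaneously suppress below a prescribed Maker-degree threshold.
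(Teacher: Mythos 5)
Your lower bound is fine and matches the paper: Theorem~\ref{thm:CBWinDegree1} (via Theorem~\ref{thm:RMCB-mindegree-k} with $k=1$) exhibits a \CB strategy that a.a.s.\ leaves an isolated Maker--vertex, and an isolated vertex trivially precludes connectivity.

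Your upper bound takes a genuinely different route from the paper, but there is a real gap in it that cannot be repaired in the way you anticipate. You correctly observe that your cut--probability estimate $\prob{S \text{ bad}} \leq \exp\left(-\Omega(a\,|S|^2)\right)$ only beats the union bound $\binom{n}{s}\leq n^s$ once $s = \Omega(\ln n / a) = \Omega(\ln n /\ln\ln n)$, so you need the small--cut regime to stretch up to $k(n)=\Theta(\ln n / \ln\ln n)$. However, \RM's minimum degree at bias $a=(1+\epsilon)\ln\ln n$ is \emph{not} of that order; it is only $\Theta(\ln\ln n)$, and no strengthening of the minimum--degree theorem can change this. To see this, let \CB spend his entire budget attacking a single vertex $v$: in round $t$ (while $v$ still has uncovered edges) \RM's $a$ random picks each hit $v$ with probability roughly $2(n-t)/n^2$, so after the blitz \RM owns only $\sum_{t\leq n} 2a(n-t)/n^2 \approx a$ of $v$'s edges in expectation, and this quantity concentrates. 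So \CB can and a.a.s.\ does create vertices of Maker--degree $(1+o(1))a = O(\ln\ln n)$, which is far below $\ln n / \ln\ln n$. Consequently your two regimes leave an unbridged window $\ln\ln n \ll s \ll \ln n / \ln\ln n$, and the crude bound $e^{-\Omega(as^2)}$ is too weak there precisely because \CB is allowed to correlate his attack with a chosen candidate set $S$.

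The paper sidesteps this by abandoning the global cut union bound and instead splitting the \emph{vertices}, not the cuts, into $\alpha$-good and $\alpha$-bad according to how much \CB has invested in them. There are only $O(\ln n)$ bad vertices, and each is ``anchored'' to the good part through a digraph of saviour edges (Corollary~\ref{cor:PathsOnB}, built from Lemmas~\ref{lemDAcyclic} and~\ref{LemKOutgoingFast}): the point is that while a bad vertex can be driven to degree $\Theta(\ln\ln n)$, \CB cannot simultaneously destroy those few saviour edges leading out of it. On the good vertices, \RM's graph is compared to a genuine $G(n,at)$ minus a bounded-degree graph and the small bad set (Lemmas~\ref{lemGoodBlocked}, \ref{lemBlockedLocalUpperbound}, \ref{lem:EdgesToComplement}), and standard cut expansion of $G(n,m)$ finishes the job. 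If you want to salvage your plan, you would need a substantially sharper estimate in the intermediate regime -- for instance, using that each of the $s$ vertices of a bad $S$ would have to route all of its $\Theta(\ln\ln n)$ random Maker--neighbours into $S$, an event of probability roughly $(s/n)^{\Omega(\ln\ln n)}$ per vertex -- rather than relying on a growing minimum degree, which is simply false here.
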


\begin{thm}\label{thm:RMCB-hamiltonicity}
The sharp threshold bias for the $(a:1)$  Hamiltonicity game between \RM and \CB is $\ln\ln n$. 
\end{thm}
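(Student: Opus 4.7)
The lower bound is immediate from Theorem~\ref{thm:RMCB-mindegree-k} applied with $k=1$: a graph containing an isolated vertex is not Hamiltonian, so the strategy for \CB witnessing the lower bound of the min-degree-$1$ game simultaneously defeats \RM in the Hamiltonicity game whenever $a\le(1-\epsilon)\ln\ln n$.

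For the upper bound, when $a\ge(1+\epsilon)\ln\ln n$, my plan is to reduce Hamiltonicity to three P\'osa-type expansion properties of \RM's final graph $G_M$ and then invoke the rotation-extension technique. Concretely, I aim to show that a.a.s., uniformly in \CB's strategy, $G_M$ satisfies: \textbf{(P1)} $\delta(G_M)\ge k_0$ for some large absolute constant $k_0$; \textbf{(P2)} $|N_{G_M}(S)\setminus S|\ge 2|S|$ for every $S\subseteq V(K_n)$ with $|S|\le n/4$; and \textbf{(P3)} every two disjoint sets $A,B\subseteq V(K_n)$ with $|A|,|B|\ge n/4$ are joined by an edge of $G_M$. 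By a standard P\'osa rotation-extension argument, (P1)--(P3) force $G_M$ to contain a Hamilton cycle. Property (P1) is Theorem~\ref{thm:RMCB-mindegree-k} with $k=k_0$. Property (P3) is an immediate counting fact: the total number of edges claimed by \CB is at most $\lceil\binom{n}{2}/(a+1)\rceil=O(n^2/\ln\ln n)=o(n^2)$, whereas the number of possible $A$--$B$ edges is at least $n^2/16$, so at least one of them lies in $G_M$. The same counting gives (P2) for large $S$, namely $|S|\ge 2n/\ln\ln n$: the set of $v\notin S$ with no $G_M$-edge to $S$ consists of vertices all of whose $|S|$ edges into $S$ fall in \CB's set, so has size at most $O(n^2/\ln\ln n)/|S|=o(n)$.

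The main obstacle is (P2) for small $S$, say $|S|\le 2n/\ln\ln n$, where the crude bound on $|E_B|$ is too weak and one must exploit the randomness of \RM. The plan is to show that for each fixed such $S$ the probability, over \RM's random picks and uniformly in \CB's strategy, that $|N_{G_M}(S)\setminus S|<2|S|$ is exponentially small in a good function of $|S|$---small enough that a union bound over the $\binom{n}{|S|}$ choices of $S$ still succeeds. To control the per-$S$ probability I would couple \RM's sequence of random edge picks to a uniformly random linear order on $E(K_n)$: even if \CB concentrates all his moves on edges incident to $S$, the per-round $(1+\epsilon)\ln\ln n : 1$ budget forces him to ``lose'' a $1-O(1/\ln\ln n)$ fraction of the $S$-incident edges to \RM, and an appropriate concentration inequality (bounded-differences on the random order, or a negative-association argument) should yield the required decay. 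The quantitative ingredients should largely parallel those underlying the min-degree analysis of Theorem~\ref{thm:RMCB-mindegree-k}; the key new content is transferring that ``per-vertex'' analysis into a ``per-small-set'' expansion statement, which I expect to be the most technical part of the argument.
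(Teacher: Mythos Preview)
Your lower bound is fine and matches the paper. For the upper bound, however, the plan has a genuine gap in the crucial step (P2) for small $S$, and the paper's route is substantially different from what you outline.

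The union-bound scheme you propose cannot work as stated. You want, for each fixed $S$, a bound on $\Pr[\,|N_{G_M}(S)\setminus S|<2|S|\,]$ that is \emph{uniform over \CB's strategies} and strong enough to beat $\binom{n}{|S|}$. Already for $|S|=1$ this fails: if \CB's strategy is to occupy the edges at a single target vertex $v$ one by one, then the probability that \RM never touches $v$ during these $n-1$ rounds is
\[
\prod_{\ell=1}^{n-1}\Bigl(1-\tfrac{2\ell}{n^2}+o(\tfrac{1}{n})\Bigr)^{a}\;\asymp\; e^{-a}\;=\;(\ln n)^{-(1+\epsilon)},
\]
which is nowhere near $n^{-1}$. (This is exactly the calculation behind Proposition~\ref{propDjProb}.) So $\sup_{\text{strategy}}\Pr[\deg_{G_M}(v)<2]$ is of order $(\ln n)^{-(1+\epsilon)}$, and summing over $n$ singletons already diverges. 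The sentence ``even if \CB concentrates all his moves on edges incident to $S$ \ldots forces him to lose a $1-O(1/\ln\ln n)$ fraction of the $S$-incident edges to \RM'' is about \emph{edge counts}, not about the \emph{number of distinct neighbours}; \CB can steer which of those edges remain free, and the remaining probability mass is too large for a naive union bound. The point is not that (P2) is false for $G_M$ --- it is true a.a.s.\ --- but that one cannot certify it set-by-set uniformly over strategies; one must first exploit that any \emph{fixed} \CB strategy can only ``attack'' few vertices.

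This is precisely where the paper's argument diverges from yours. Instead of a direct expansion/rotation proof, the paper partitions the vertices into at most $\ln n$ \emph{bad} vertices (those with \CB-degree $\ge 3\alpha n$) and the remaining \emph{good} vertices. On the good vertices, via the permutation coupling and Lemmas~\ref{lemGoodBlocked}--\ref{lemBlockedLocalUpperbound}, \RM's graph contains $G(n,m)-E(H)$ for a sparse $H$ and is shown to be a half-expander and, after deleting any small prescribed set, Hamilton-connected (Corollary~\ref{cor:GnmHamConnected}). The bad vertices are handled structurally: a saviour-edge digraph guarantees that each bad vertex has two Maker-edges to non-bad vertices (Corollary~\ref{corMinDegBad}), which yields vertex-disjoint paths covering all bad vertices with good endpoints (Corollary~\ref{cor:PathsOnB}); these are then linked by short good-vertex paths into a single path $P$, and the Hamilton-connectivity of the remainder closes $P$ into a Hamilton cycle. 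In short, the paper never attempts a uniform per-set estimate; it absorbs the adversary's power into a bounded set of exceptional vertices and proves a much stronger random-graph property on the rest.

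Two minor side remarks: your ``$o(n)$'' in the large-$S$ case of (P2) is actually only $\le n/(4(1+\epsilon))$ at the lower end $|S|=2n/\ln\ln n$, though the inequality $|N(S)\setminus S|\ge 2|S|$ still follows; and the claim that (P1)--(P3) with expansion factor exactly $2$ force Hamiltonicity via ``standard'' P\'osa needs a slightly stronger expansion (say $2|S|+1$) or a more careful two-sided rotation argument --- not a serious issue, but worth stating precisely.
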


On the one hand these theorems show that mindless random strategies
are very ineffective for the games we consider here, where the goal is ``global''. 
As discussed earlier, randomized strategies are shown to be close to optimal  for games where the 
goal of Maker is ``local'', for example when the goal of Maker is 
to build a fixed subgraph $H$ \cite{bednarska2000biased}. 
On the other hand, these theorems establish that the bottleneck for winning
connectivity and Hamiltonicity in half-random games is to be able to
win the minimum degree-$1$ game. This is similar to the phenomenon
that occurs in the fully random and the fully clever scenario.\\

{\bf Remarks.} 
The results of this paper and of \cite{THRG2} are based on the Master thesis
of the first author~\cite{Jonas-thesis}. Recently, Krivelevich and
Kronenberg~\cite{Krivelevich-Kronenberg} also studied half-random
games independently (both in the \CM-\RB and the \RM-\CB setup). 
For the \RM-\CB setup they determine the order of magnitude of 
the half-random threshold bias of the Hamiltonicity and the  $k$-connectivity
game. Here we manage to pin down the constant factor for Hamiltonicity
and the minimum degree-$k$ games. In the conclusion section we also indicate how
the similar sharp threshold result for the $k$-connectivity game can
be obtained easily from our proof technique. \\
In~\cite{THRG2} we determine the sharp threshold bias of the perfect
matching and the Hamiltonicity games in the \CM-\RB setup. 
Krivelevich and Kronenberg~\cite{Krivelevich-Kronenberg} obtain
analogous results with different methods.
%

\subsection{Terminology and organization}

We will use the following terminology and conventions.
A \textit{move} consists of claiming one edge. \textit{Turns} are taken alternately, 
one turn can have multiple moves. For example: With an $(a:1)$ bias,
Maker has $a$ moves per turn, while Breaker has $1$ move. 
A {\em round} consists of a turn by Maker followed by a turn by Breaker. 
By a strategy we mean a set of rules which specifies what the
player does in any possible game scenario. For technical reasons 
we {\em always} consider strategies that last until there are no free
edges. This will be so even if the player has already won, already
lost, or his strategy description includes ``then he forfeits''; in
these cases the strategy just always occupies an arbitrary free edge,
say with the smallest index. 
The {\em play-sequence} $\Gamma$ of length $i$ of an actual game between Maker and Breaker 
is the list $(\Gamma_1, \ldots , \Gamma_i) \in E(K_n)^i$ of the first $i$ edges that were occupied during the game by either of the players, 
in the order they were occupied. 
We make here the convention that a player with a bias $b> 1$ occupies 
his $b$ edges within one turn in succession and these are noted in the play-sequence 
in this order
(even though in the actual game it makes no difference in what order one player's moves are
occupied within one of his turns). 
We denote Maker's graph after $t$ rounds with $G_{M,t}$ and similarly Breaker's graph with $G_{B,t}$. Note that these graphs have $at$ and $bt$ edges respectively. 
We will use the convention that Maker goes first. This is more of a notational 
convenience, since the proofs can be easily adjusted to Breaker going first, and yielding 
the same asymptotic results. 
We will routinely omit rounding signs, whenever they are not crucial in affecting our asymptotic statements.

We introduce the useful notion of the \textit{permutation strategy} in the next section, and prove Theorems \ref{thm:RMCB-mindegree-k},  \ref{thm:RMCB-connectivity} and \ref{thm:RMCB-hamiltonicity} in Section \ref{sec:RMCB}.

\section{The permutation strategy}\label{sec:permutation}
In this section, we introduce an alternative way to think of half-random games which 
will be important in many of our proofs. 
One feature that makes half-random games more difficult to study 
than random games is that the graph of the random player is {\em not} 
uniformly random: the moves of the clever player affect it. 
Our goal is still to be able to somehow compare it to the uniform random graph 
$G(n,m)$ with the appropriate number of edges and draw conclusions from the rich
theory of random graphs.

Any of the players in a positional game can use a permutation $\sigma \in S_{E(K_n)}$, i.e. $\sigma: \left[ {n\choose 2} \right] \rightarrow E(K_n)$,
of the edges of $K_n$ 
for his strategy as follows. The player following the {\em permutation strategy} $\sigma$ 
is scanning through the list $(\sigma(1), \ldots , \sigma ({n\choose 2}))$ during the game and in each of his moves he occupies the next free edge on it (that is, the next edge
which was not yet occupied by his opponent). 
The permutation strategy gives rise to a natural randomized strategy for 
\RM when he selects the permutation uniformly at random. 
It turns out that playing according to this random permutation strategy is equivalent 
to playing according to the original definition of \RM's strategy 
(i.e., always choosing uniformly at random from the remaining free edges). 
 
The following proposition formalizes this. Intuitively it is quite
clear, in \cite{THRG2} we give a formal proof of a more general statement.
Here we only state the special case we need. 
Since the goal of the game is not relevant here, we state the proposition for graph games in general.
\begin{prop}\label{prop:permutationStrategy}
For every strategy $S$ of \CB in a $(a:b)$-game on $E(K_n)$ the following is true.
For every 
$m\leq {n\choose 2}$
and every sequence 
$\Gamma = (\Gamma_1, \ldots , \Gamma_m)$ of distinct edges,
the probability that 
$\Gamma$ is the play-sequence of a half random game between 
\CB  playing according to strategy $S$ and \RM
is equal to the probability that $\Gamma$ is the play-sequence of the game when 
$\RM$ plays instead according to the random permutation strategy.
\end{prop}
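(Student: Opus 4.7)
The plan is to compute the probability of a prescribed play-sequence $\Gamma$ explicitly under each of the two \RM strategies and show that the two expressions agree term-by-term. Because \CB uses the deterministic strategy $S$, every Breaker move in $\Gamma$ is forced by the play-sequence so far; hence if any Breaker entry of $\Gamma$ disagrees with what $S$ prescribes, both probabilities are $0$. Otherwise, writing $M_1, \dots, M_p$ for Maker's moves in the order they appear in $\Gamma$ and $F_j$ for the set of edges still free just before Maker's $j$-th move, the chain rule factors both probabilities as
\[
\prod_{j=1}^{p} \prob{\text{Maker's $j$-th move equals } M_j \given \text{history so far}}.
\]
Under the original uniform-free-edge \RM strategy each factor equals $1/|F_j|$ by definition, so it remains to verify the same identity for the permutation strategy.

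For the permutation strategy I would proceed by induction on the prefix length, the Breaker step being trivial. For the Maker step, fix the prefix $\Gamma'$ of $\Gamma$ ending just before Maker's $j$-th move, let $\Sigma \subseteq S_{E(K_n)}$ denote the set of permutations $\sigma$ that produce $\Gamma'$, and for each $\sigma \in \Sigma$ write $k_{j-1}$ for the $\sigma$-position of $M_{j-1}$ (with $k_0 := 0$). The goal is to show that, conditional on $\sigma \in \Sigma$, Maker's $j$-th move is uniform on $F_j$. I would prove this by a symmetry argument: for any two edges $e, e' \in F_j$, let $\tau_{e,e'}\sigma$ denote the permutation obtained by swapping the $\sigma$-positions of $e$ and $e'$. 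The crucial observation is that both $e$ and $e'$ lie at $\sigma$-positions strictly greater than $k_{j-1}$---indeed each of the positions $1, \dots, k_{j-1}$ of $\sigma$ contains either one of Maker's earlier edges $M_1, \dots, M_{j-1}$ or an edge already claimed by \CB, and $F_j$ excludes both by definition. Hence $\tau_{e,e'}$ fixes $\sigma(1), \dots, \sigma(k_{j-1})$ pointwise; combined with the determinism of $S$ this makes $\tau_{e,e'}$ an involution of $\Sigma$ that interchanges the events $\{\text{Maker's $j$-th move is } e\}$ and $\{\text{Maker's $j$-th move is } e'\}$. Partitioning $\Sigma$ into the $|F_j|$ equal-size classes indexed by Maker's $j$-th move then gives the desired factor $1/|F_j|$.

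The main delicate point, which I expect to be the principal obstacle, is confirming that the initial segment $\sigma(1), \dots, \sigma(k_{j-1})$ really does determine the entire history $\Gamma'$---in particular the pointer positions $k_1, \dots, k_{j-1}$ themselves. This follows by a short secondary induction on $j$: once $\sigma(1), \dots, \sigma(k_{j-1})$ is known together with $S$ and the rules of the game, Maker's scanning for each of his first $j-1$ moves only inspects positions $\leq k_{j-1}$, so all pointer advances and Breaker responses are forced. Since $\tau_{e,e'}$ touches no entry of $\sigma$ at position $\leq k_{j-1}$, the entire prefix is preserved, and the symmetry argument goes through.
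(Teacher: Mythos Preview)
Your argument is correct. The paper itself does not give a proof of this proposition at all: it states that the result is ``intuitively quite clear'' and defers the formal argument to the companion paper~\cite{THRG2}, so there is nothing to compare against here beyond noting that your write-up supplies what the present paper omits.

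Your approach---factoring via the chain rule, disposing of Breaker's deterministic moves, and then using the transposition $\tau_{e,e'}$ to exhibit a bijection on the set $\Sigma$ of permutations consistent with the prefix $\Gamma'$---is the natural one. The one point worth making fully explicit when you write it up is that $k_{j-1}$ is allowed to vary over $\sigma\in\Sigma$, but the involution $\tau_{e,e'}$ (viewed as post-composition with the transposition of the two edges $e,e'$) is nonetheless a single globally defined map on permutations; your secondary induction then shows it preserves $\Sigma$ and the value $k_{j-1}$ for each $\sigma$ individually. With that clarified, the equal-size partition of $\Sigma$ by Maker's $j$-th move follows cleanly, and the factor $1/|F_j|$ matches the uniform-free-edge model.
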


For $1\leq m\leq {n\choose 2}$ and a permutation $\sigma \in
S_{E(K_n)}$, let $\Gsigma{\sigma}{m}\subseteq K_n$ be the subgraph with edge set $E(\Gsigma{\sigma}{m}) = \{ \sigma(i) : 1\leq i \leq m\}$.
Note that if $\sigma$ is a permutation chosen uniformly at random out
of all permutations, then $\Gsigma{\sigma}{m}$ is distributed like the random
graph $G(n,m)$.
If \RM plays a particular game according to a permutation
$\sigma \in S_{E(K_n)}$ and the last edge he takes
in round $i$ is $\sigma (m_i)$, then \RM's graph after round $i$ is contained in $\Gsigma{\sigma}{m_i}$.  
Here $m_i \geq ia$, but the actual value of it depends on the strategy
of \CB and the 
permutation $\sigma$ itself. Since \CB occupied
$ib$ edges so far and these are 
the only edges \RM possibly skips from his permutation, we also have that $m_i \leq i(a+b)$.
Hence  \RM's graph after the $i$th round is always contained in the random
graph $\Gsigma{\sigma}{i(a+b)}$.

\section{\CB vs \RM}\label{sec:RMCB}

In this section, we prove Theorems  \ref{thm:RMCB-mindegree-k},
\ref{thm:RMCB-connectivity}, and \ref{thm:RMCB-hamiltonicity}. 
We start with showing that a.a.s. \CB is able to isolate a vertex in
\RM's graph if the bias of \RM is not too large. This provides 
the lower bound on the sharp thresholds in all the games we study 
and is the topic of the next subsection. We treat the 
upper bounds in Subsections~\ref{sec:RM-structure} and \ref{sec:RMHamiltonicity}.

\subsection{\CB isolates a vertex of \RM}\label{sec:CBWin}
In this subsection we prove the following theorem.
\begin{thm}\label{thm:CBWinDegree1}
Let $\epsilon>0$ and $a\leq (1-\epsilon)\ln{\ln{n}}$. Then there exist a strategy for \CB, such that he a.a.s. wins the $(a:1)$-biased minimum degree-$1$ game against \RM.
\end{thm}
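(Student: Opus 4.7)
The plan is to give a simple sequential strategy for \CB and analyze it via the permutation view of Proposition~\ref{prop:permutationStrategy} combined with a chain-rule argument. Concretely, \CB maintains a ``current target'' vertex $v$, starting from an arbitrary vertex. Whenever $v$ has no Maker-edge, he uses each of his turns to claim one further free edge incident to $v$; if $v$ ever accumulates all $n-1$ incident Breaker-edges, then $v$ is permanently isolated in \RM's graph and \CB wins. As soon as \RM places an edge at $v$, \CB abandons $v$ and picks any vertex still free of Maker-edges as the new current target.

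\textbf{Single-attempt success.} By Proposition~\ref{prop:permutationStrategy} we may assume \RM follows the uniform random permutation strategy. Consider an attempt on a fresh $v$ starting at game round $s$. In the $j$-th round of the attempt ($1\le j\le n-1$), the free edges incident to $v$ number $n-j$ while the total number of free edges equals $\binom{n}{2}-(s+j-2)(a+1)$, which is $(1-o(1))\binom{n}{2}$ provided the elapsed rounds stay $o(n^2/a)$. A hypergeometric estimate then gives the conditional probability that \RM takes no edge at $v$ during this round as $\exp\!\bigl(-\tfrac{2a(n-j)}{n(n-1)}(1+o(1))\bigr)$, and multiplying over $j=1,\ldots,n-1$ (using $\sum_{j=1}^{n-1}(n-j)=\binom{n}{2}$) yields a conditional per-attempt success probability of at least $(1-o(1))e^{-a} = (1-o(1))(\ln n)^{-(1-\epsilon)}$.

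\textbf{Running many attempts.} Set $k:=\lceil(\ln n)^{1-\epsilon/2}\rceil$ and $T:=k(n-1)$. Then $T(a+1)=o(n^2)$, so the hypothesis of the single-attempt bound holds uniformly throughout the $T$ rounds. The containment $G_{M,s}\subseteq \Gsigma{\sigma}{s(a+1)}$ noted after Proposition~\ref{prop:permutationStrategy}, the monotonicity of the isolated-vertex count in $G(n,m)$, and its standard concentration together imply that a.a.s.\ the number of Maker-untouched vertices throughout these $T$ rounds is at least $ne^{-2T(a+1)/n} = n^{1-o(1)} \gg k$, so \CB never runs out of fresh targets. Applying the chain rule with the single-attempt lower bound then gives
\[
\Pr[\text{all $k$ attempts fail}] \le \bigl(1 - (1-o(1))(\ln n)^{-(1-\epsilon)}\bigr)^k \le \exp\!\bigl(-(1-o(1))(\ln n)^{\epsilon/2}\bigr) \longrightarrow 0.
\]

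\textbf{Main obstacle.} The nontrivial point is making the single-attempt bound apply uniformly, conditionally on any history compatible with the start of attempt $i$. This requires maintaining two invariants through all $k$ attempts: the elapsed game time must stay $o(n^2/a)$ (so the hypergeometric denominator stays $(1-o(1))\binom{n}{2}$), and the pool of Maker-untouched vertices must remain large. Both follow from $T=k(n-1)\ll n^2/(a+1)$ combined with the random-graph concentration above, but need to be tracked carefully along the run of the strategy.
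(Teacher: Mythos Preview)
Your proposal is correct and follows the same overall strategy as the paper (sequential isolation attempts analyzed via the permutation model), but your organization of the conditional-probability argument is genuinely cleaner than the paper's and deserves comment.

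The paper conditions throughout on a global ``look-ahead'' event $\mathcal{A}$ (that $G_\sigma((a+1)nk)$ has an isolated vertex), which guarantees \CB never runs out of targets. Because $\mathcal{A}$ depends on future coordinates of $\sigma$, conditioning on it destroys the uniformity of \RM's next move, and the paper spends a nontrivial switching lemma (their Lemma~3.5) to recover an almost-uniform upper bound of $(1+\epsilon)\tfrac{2}{n^2}$ per edge under this conditioning. Your route avoids this entirely: you condition only on the past history at the start of each attempt, where \RM's next move \emph{is} exactly uniform over free edges by the permutation model, and you handle the possibility of running out of targets as a separate bad event bounded by the Erd\H os--R\'enyi isolated-vertex threshold. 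Formally, with $\mathcal{D}_j=\{\text{attempts }1,\dots,j\text{ all occur and fail}\}$, the chain-rule step $\Pr[\mathcal{D}_j\mid\mathcal{D}_{j-1}]\le 1-p$ holds because on any history in $\mathcal{D}_{j-1}$ either no target exists (then $\mathcal{D}_j$ fails trivially) or a target exists and the purely past-measurable single-attempt bound applies; then $\Pr[\text{\CB loses}]\le\Pr[\text{runs out}]+\Pr[\mathcal{D}_k]$.

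Two small clarifications worth making explicit in a final write-up: first, the decomposition just stated should be written out, since your current text could be read as conditioning the chain rule itself on the ``large pool'' event, which would reintroduce exactly the future-conditioning problem the paper has to work around. Second, the ``pool remains large'' invariant is needed only for the $\Pr[\text{runs out}]$ term, not for the chain rule; your ``Main obstacle'' paragraph slightly blurs this. Your choice $k=(\ln n)^{1-\epsilon/2}$ (smaller than the paper's $k\asymp\ln n/\ln\ln n$) is perfectly adequate and keeps $T(a+1)=o(n\ln n)$ with room to spare.
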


\begin{proof}
Let $v_1, v_2, \ldots , v_n$ be the vertices of the underlying complete graph. 
\CB's strategy is rather simple. \CB identifies the vertex 
$v_i$ of smallest index which has degree $0$ in Maker's graph.
Then he occupies the free edges incident to $v_i$, one by one, in an increasing order of 
the indices of their other endpoint. (We refer to this process as {\em \CB trying to isolate 
$v_i$.})  If he succeeds in occupying all $n-1$ edges incident to $v_i$, then he won 
the game. Otherwise, that is if \RM occupied an edge incident to $v_i$ while \CB was trying to 
isolate it, \CB iterates: he identifies a new vertex he tries to
isolate. 
In this case we say that {\em \CB failed to isolate $v_i$}. 
If \CB fails to isolate
\[k(n)=k:=(1-\epsilon)\frac{\ln{n}}{4\ln{\ln{n}}}\]  
vertices then he forfeits.

Recall the permutation strategy for the random player of Section \ref{sec:permutation}, based on a random permutation of the edges of $E(K_n)$. Let us denote by ${\cal W}$ the set of those permutations for \RM which 
would result in a win for \CB using this described strategy. 
Note that for $k$ tries, Breaker spends at most $(n-1)k<nk$ edges (and therefore turns)
and hence the presence of a permutation $\sigma$ in ${\cal W}$ is determined by its first 
$(a+1)nk$ edges. 

Let ${\cal A}$ denote the set of those permutations $\sigma$ 
for which the graph $\Gsigma{\sigma}{(a+1)nk}$ of the first $(a+1)nk$ edges 
has an isolated vertex. Since
 $$(a+1)nk \leq (\ln{\ln {n}}+1)n\frac{(1-\epsilon)\ln{n}}{4\ln{\ln{n}}}
\leq (1-\epsilon)\frac{1}{2}n\ln{n},$$
the classic result of Erd\H os and R\'enyi~\cite{erdrenyi59} on
the sharp threshold in $G(n,m)$ for the minimum degree being at least $1$ implies
the following.
\begin{lem}\label{lem1isol} ${\cal A}$ occurs a.a.s. 
\end{lem}

The following lemma guarantees that, conditioned on ${\cal A}$, \CB 
tries to isolate $k$ vertices or wins already earlier.
\begin{lem}\label{lemkTries} For every $\sigma \in {\cal A} \setminus {\cal W}$ 
\CB tries to isolate $k$ vertices.
\end{lem}
\begin{proof} 
For any permutation $\sigma \in {\cal A}$, the graph
$\Gsigma{\sigma}{(a+1)nk}$ contains the graph of \RM up to the point when \CB tries and fails to isolate
at most $k$ vertices. On the other hand $\Gsigma{\sigma}{(a+1)nk}$ does have an isolated
vertex by the definition of ${\cal A}$, 
so \CB did not run out of isolated vertices by the time he failed to
isolate his $(k-1)$th vertex. 
\end{proof}

The main ingredient of our proof is an estimation of the probability that
\CB fails to isolate his $j$th vertex, given that he already failed to isolate 
the first $j-1$ vertices.
Let ${\cal D}_0:=S_{E(K_n)}$ be the set of all permutations, and for $1\leq j\leq k$, 
let ${\cal D}_j$ denote the event (set of permutations) that induces a game where 
\CB tries and fails to isolate at least the first $j$ vertices. 
Notice that ${\cal D}_0 \supseteq {\cal D}_1 \supseteq \cdots \supseteq {\cal D}_k$. 
Our eventual goal is to show that ${\cal D}_k\cap {\cal A}$ is very small.
To achieve this we bound $|{\cal D}_j \cap {\cal A}|$ in terms of $|{\cal D}_{j-1} \cap {\cal A}|$.

\begin{prop}\label{propDjProb} For every $n$ large enough and every $j, 1\leq j \leq k$, we have
$$|{\cal D}_j \cap {\cal A}| \leq \left(1-\frac{1}{\ln^{1-\epsilon^2/2} n}\right) 
|{\cal D}_{j-1} \cap {\cal A}|.$$	
\end{prop}
Before we prove the proposition, let us show how it implies our theorem.
Following the strategy defined above, \CB forfeits either if he fails
every one of his first $k$ tries to isolate a vertex, or if there is
no more vertex of Maker-degree $0$. 
We saw in Lemma \ref{lemkTries}, that for any permutation in 
${\cal  A}$ the latter one is not an option: \CB has to fail at least $k$ times before he runs out of 
vertices he can try. Therefore, using Proposition \ref{propDjProb}, we obtain
\begin{eqnarray*}
\frac{|{\cal D}_k\cap {\cal A}|}{|{\cal A}|} &\leq & \left(1- \frac{1}{(\ln n)^{1-\epsilon^2/2}}\right) \frac{|{\cal D}_{k-1}\cap {\cal A}|}{|{\cal A}|}\\  
&\leq & \left(1- \frac{1}{(\ln n)^{1-\epsilon^2/2}}\right)^k\\
&\leq& e^{-k(\ln n)^{-(1-\epsilon^2/2)}}\\
&\leq & e^{-(1-\epsilon)\frac{\left(\ln{n}\right)^{\epsilon^2/2}}{4\ln{\ln{n}}}}
\rightarrow 0.
\end{eqnarray*}
Finally, since ${\cal A}$ holds a.a.s.\ by Lemma \ref{lem1isol}, we also have
\[\prob{\text{\CB wins}}\geq \prob{\overline{{\cal D}_k}\given {\cal
  A}}\prob{{\cal A}} \ \rightarrow \ 1.\]
\end{proof}

To complete the proof of Theorem~\ref{thm:CBWinDegree1} we need to
prove Proposition~\ref{propDjProb}.
For that there is a subtle technicality that we have to take care of. If we assume that ${\cal A}$ holds, 
we use knowledge of the first $(a+1)nk$ random edges of our permutation
and thus knowledge of \RM's moves up until the turn $nk$. Therefore, if we consider the distribution of the next move of \RM among the free 
edges {\em before} turn $nk$, conditioned under ${\cal A}$, this distribution might not be uniform anymore. 
 For example, if there is only one vertex $\tilde{v}$ left with degree $0$ in \RM's graph, 
 then the probability that \RM chooses an edge incident to $\tilde{v}$, under the condition that ${\cal A}$ holds, is $0$. 
 However, while some edges may have very low probability to be chosen by \RM, we can 
 show that there are no edges that have a particularly high probability to be picked.

For a starting edge sequence $\pi \in S_{E(K_n)}^{(m)}$ of length $m$,
let ${\cal A}(\pi) \subseteq {\cal A}$ denote the set of permutations 
$\sigma \in {\cal A}$ with  initial segment equal to $\pi$. 
Given an edge sequence $\eta \in S_{E(K_n)}$ and a strategy $S$ of \CB,  
we say an edge $e\in E(K_n)$ to be {\em $(S,\eta)$-Maker}  if it
is taken by \RM when he plays according to $\eta$ against strategy $S$. 
Let ${\cal A}(\pi ; S;e) \subseteq {\cal A}(\pi)$ denote the set of permutations $\eta \in S_{E(K_n)}$
which start with $\pi$ and after that the next $(S,\eta)$-Maker edge is $e$.

\begin{lem}\label{technicalLemma} For every $\epsilon >0$ the following holds for large enough $n$.
For every strategy $S$ of \CB,  positive integer $m\leq (a+1)nk$, 
starting permutation $\pi \in S_{E(K_n)}^{(m)}$ of length $m$ and 
edge $e\in E(K_n)$ we have that 
$$|{\cal A}(\pi; S; e)| \leq (1+\epsilon) \frac{2}{n^2} |{\cal A} (\pi)|.$$
\end{lem}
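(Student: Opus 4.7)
My plan is a swap-based injection argument, together with a small case split on whether the ``next Maker edge'' $e$ lands among the first $(a+1)nk$ positions of the random permutation. Given $\pi$ and $S$, the game up through (and including any imminent Breaker move) is deterministic, so let $B \subseteq R := E(K_n) \setminus \{\pi_1,\ldots,\pi_m\}$ denote the set of Breaker edges outside $\pi$ and set $R' := R \setminus B$. Any ``next Maker edge'' must belong to $R'$, so I take $e \in R'$. For $\eta \in \mathcal{A}(\pi;S;e)$, write $j(\eta)$ for the position of $e$ in $\eta$; then $\eta_{m+1},\ldots,\eta_{j(\eta)-1} \in B$. Since $\eta \in \mathcal{A}$, let $v(\eta)$ be the smallest-indexed vertex isolated in $\Gsigma{\eta}{(a+1)nk}$.

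For the \emph{early} case $j(\eta) \leq (a+1)nk$ I use the following swap: for each $e' \in R' \setminus \{e\}$ not incident to $v(\eta)$, let $\phi_{e'}(\eta)$ be obtained from $\eta$ by exchanging the positions of $e$ and $e'$. Since $e' \in R'$ avoids both $\pi$ and $\eta_{m+1},\ldots,\eta_{j-1}$, it must sit at some position $k > j$; one then checks that $\phi_{e'}(\eta)$ still starts with $\pi$, has next Maker edge $e'$ (entries $m+1,\ldots,j-1$ are unchanged and position $j$ now holds $e' \in R'$), and lies in $\mathcal{A}$: the set of first $(a+1)nk$ edges changes only when $k > (a+1)nk$, and then only by replacing $e$ with $e'$; since $v(\eta)$ is isolated and $e'$ is not incident to it, $v(\eta)$ remains isolated. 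The map $(\eta,e') \mapsto \phi_{e'}(\eta)$ is injective (recover $e'$ as the next Maker edge of the image, then undo the swap), and at most $n-1$ edges incident to $v(\eta)$ together with $e$ itself are excluded, leaving $\geq |R'| - n$ valid choices of $e'$. This yields
\[|\mathcal{A}(\pi;S;e) \cap \{j \leq (a+1)nk\}| \cdot (|R'|-n) \ \leq \ |\mathcal{A}(\pi) \cap \{j \leq (a+1)nk\}|.\]

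The \emph{late} case $j(\eta) > (a+1)nk$ needs no swap: here the set $W := \{\eta_{m+1},\ldots,\eta_{(a+1)nk}\}$ lies entirely in $B$, so membership in $\mathcal{A}$ depends only on $W$, and conditional on $W$ the tail $\eta_{(a+1)nk+1},\eta_{(a+1)nk+2},\ldots$ is a uniform permutation of $R \setminus W \supseteq R'$; by symmetry the first $R'$-entry of the tail (= next Maker edge) is uniform over $R'$, giving $|\mathcal{A}(\pi;S;e) \cap \{j > (a+1)nk\}| = |\mathcal{A}(\pi) \cap \{j > (a+1)nk\}| / |R'|$. Adding the two cases gives $|\mathcal{A}(\pi;S;e)| \leq |\mathcal{A}(\pi)|/(|R'|-n)$. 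The quantitative conclusion then follows by bounding $|R'| = \binom{n}{2} - m - |B|$ from below: for $a \leq (1-\epsilon)\ln\ln n$ one has $(a+1)nk = O(n\ln n)$ and $|B| \leq m/a + O(1) = o(n^2)$, so $|R'| - n = \binom{n}{2}(1-o(1))$, whence $1/(|R'|-n) \leq (1+\epsilon)\cdot 2/n^2$ for all sufficiently large $n$. The main technical hurdle is the $\mathcal{A}$-preservation in the early-case swap when the partner $e'$ falls outside the first $(a+1)nk$ positions; restricting to $e'$ not incident to $v(\eta)$ handles this and costs only $n-1$ candidates out of $|R'|$.
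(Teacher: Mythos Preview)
Your proof is correct and follows essentially the same swap-based argument as the paper: forbid swapping $e$ with edges in $\pi$, with Breaker edges, and with edges incident to a fixed isolated vertex of $G_\eta((a+1)nk)$, leaving $\binom{n}{2}-o(n^2)$ valid swap partners and hence the desired bound. The only organizational difference is that the paper first partitions $\mathcal{A}(\pi)$ according to the Breaker-edge prefix $\pi'\supseteq\pi$ and then swaps within each $\hat{\mathcal{A}}(\pi')$, whereas you split by whether $j(\eta)\le (a+1)nk$ and handle the late case via a direct symmetry argument---but the late case could equally well be absorbed into your swap argument (there the first $(a+1)nk$ positions are untouched, so $\mathcal{A}$ is preserved automatically), so the two write-ups are really the same proof.
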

\begin{proof}
We can assume that $e$ is still  unoccupied after the permutation strategy
has been played according to $\pi$, otherwise
the statement is trivial (since the set ${\cal A}(\pi; S; e)$ is empty).

We partition the sets ${\cal A}(\pi)$ and ${\cal A}(\pi; S;e)$
according to the sequence of edges that come after $\pi$ in the
permutations until the first $(S,\eta)$-Maker edge. 
Let $\pi'$ be an arbitrary extension of $\pi$ with a sequence $\tau$ containing only such edges 
which were occupied by \CB when the permutation strategy was played according to $\pi$. 
($\pi'=\pi$ is also possible.)
Note that the length of $\pi'$ is at most $m +  \frac{m}{a} \leq 2(a+1)nk = o(n^2)$.

Let ${\cal \hat{A}(\pi';S)} \subseteq {\cal A}(\pi')$ be the set of those permutations $\eta$ which 
start with $\pi'$ and continue with an $(S, \eta)$-Maker edge. 
Let $\hat{{\cal A}}_e (\pi',S)\subseteq \hat{{\cal A}}(\pi',S)$ be the set of permutations where the edge $e$ comes immediately after $\pi'$.
Unless otherwise stated, from now on we consider $\pi'$ fixed and suppress it in the arguments of 
$\hat{{\cal A}}_e$ and $\hat{{\cal A}}$. 

To show the upper bound of the Lemma, we will find for any permutation $\eta\in\mathcal{\hat{A}}_e$ 
many different permutations in $\mathcal{\hat{A}}$. 
For any such  $\eta$ and edge $f\in E(K_n)$ 
we denote by $\eta_e^f $ the edge permutation with the positions of $e$ and $f$ interchanged. 
Let ${\cal M}(\eta)$ be the set of those permutations $\eta_e^f$ which are in $\mathcal{\hat{A}}$. That is,
\[{\cal M}(\eta):=\left\{\eta_e^f: f\in E(K_n), \eta_e^f \in\mathcal{\hat{A}}\right\}.\] 
There are three possible reasons why a permutation $\eta_e^f$ would not be in ${\cal M}(\eta)$:
\begin{enumerate}
\item Any permutation in $\mathcal{\hat{A}}$ must start with $\pi'$, hence we are not allowed to swap $e$  with any edge that comes up in $\pi'$.
The number of these forbidden edges is $m \leq (a+1)nk = o(n^2)$.
\item In any permutation $\eta\in\mathcal{\hat{A}}$ the edge following $\pi'$ must be $(S,\eta)$-Maker, hence we cannot swap $e$ with 
any edge claimed by \CB up to this point. There are at most
$\frac{m}{a} = o(n^2)$ such edges. 
\item Finally, the graph formed by the first $(a+1)nk$ edges of any edge permutation in $\mathcal{\hat{A}}$ must have an isolated vertex. So if $\Gsigma{\eta}{(a+1)nk}$ 
had only one isolated vertex $\tilde{v}$, we might not be able to swap $e$ with an edge $f$ incident to $\tilde{v}$, since then $\Gsigma{\eta_e^f}{(a+1)nk}$ 
might not have an isolated vertex anymore. So we forbid a swap with the $n-1=o(n^2)$ incident edges to the last isolated vertex of $\Gsigma{\eta} {(a+1)nk}$.
\end{enumerate}
Swapping $e$ with any edge that is {\em not} in these three categories leads to an edge permutation in $\hat{\cal A}$. Therefore,
$\left|{\cal M}(\eta)\right|\geq {n\choose 2}-o(n^2).$
By definition ${\cal M}(\eta) \subseteq \hat{{\cal A}}$ for every permutation $\eta\in\mathcal{\hat{A}}_e$. 
The sets ${\cal M}(\eta) $ and ${\cal M}(\zeta)$ are disjoint for $\eta \neq \zeta$,
as clearly $\eta_e^f = \zeta_e^{f'}$ is only possible if $f=f'$ and $\eta=\zeta$.
Hence for the cardinalities we have
\begin{equation}\label{eq:tech-proportion}
\left|\mathcal{\hat{A}}\right|\geq \sum_{\eta\in\mathcal{\hat{A}}_e}\left|{\cal M}(\eta)\right|\geq\left|\mathcal{\hat{A}}_e\right|\left({n\choose 2}-o(n^2)\right).
\end{equation}
Now recall that $\hat{{\cal A}} = \hat{{\cal A}}(\pi';S)$ and  $\hat{{\cal A}}_e = \hat{{\cal A}}_e(\pi',S)$ where $\pi'$ was an arbitrary, but fixed 
extension of $\pi$ with an edge sequence $\tau$ containing only edges \CB took up to playing according to $\pi$.

Our focus of interest, the sets ${\cal A} (\pi)$ and ${\cal A} ( \pi; S;e)$ are disjoint unions of the sets $\hat{{\cal A}}(\pi',S)$
and $\hat{{\cal A}}_e(\pi',S)$, respectively, where the disjoint union is taken over all extensions $\pi'$ of $\pi$ with distinct 
edges
which were occupied by \CB in the game played according to $\pi$.
Therefore Equation~\eqref{eq:tech-proportion} is also valid for them and hence,
\[|{\cal A} (\pi;S;e)| \leq\frac{1}{{n\choose 2}-o(n^2)}|{\cal A} ( \pi)|\leq\frac{2(1+\epsilon)}{n^2} |{\cal A} ( \pi)|\]
for $n$ large enough, which is the statement of the lemma.
\end{proof}

With Lemma \ref{technicalLemma} proven, we can return to the main line of reasoning.
\begin{proof}[Proof of Proposition \ref{propDjProb}]
Let $\sigma \in {\cal A} \cap {\cal D}_{j-1}$ and let $w_1, \ldots,
w_{j-1}, w_j$ be the vertices \CB tries to isolate, in this
order, when he plays against $\sigma$. Note that the first $j-1$ of these vertices do exist 
because $\sigma \in {\cal D}_{j-1}$ and then $w_j$ also exists by Lemma~\ref{lemkTries}. 
We define $\pi = \pi(\sigma)$ to be the initial segment of $\sigma$, which ends with the 
last edge \RM takes in the round where he occupies his first edge incident to $w_{j-1}$. 
The length of $\pi$ is at most $(a+1)(n-1)(j-1)$, so we will be able
to use Lemma~\ref{technicalLemma}. Let $\Pi$ be the set of all such $\pi$, i.e.
$$\Pi=\left\{\pi(\sigma):\sigma\in {\cal A} \cap {\cal D}_{j-1}\right\}$$

We classify the permutations $\sigma \in {\cal D}_{j-1} \cap {\cal A}$ according to these
initial segments. 
We will prove that for all $\pi\in\Pi$,
$$|({\cal D}_{j}\cap {\cal A}) (\pi)| \leq \left(1-\frac{1}{\ln^{1-\epsilon^2/2} n}\right) |({\cal D}_{j-1}\cap {\cal A}) (\pi)|$$
and the statement follows since $({\cal D}_{j}\cap {\cal A})$ is the disjoint union of the
$({\cal D}_{j}\cap {\cal A}) (\pi)$ when the disjoint union is taken
over all $\pi\in\Pi$. The union is disjoint since no element of $\Pi$
is the prefix of another.

Let us fix an arbitrary initial segment $\pi\in\Pi$. After $\pi$ has
been played out, \CB immediately identifies the next 
vertex $w_j$ he will try to isolate. Suppose that $r\leq n-1$ edges incident to $w_j$ are free,
that is \CB occupied already $n-1-r$ edges incident to $w_j$ during his previous tries to isolate a vertex, while \RM occupied
none.
In the next round \CB occupies the free edge from $w_j$ to the vertex with the smallest
index. Then \RM has $a$ random edges and the question is whether he hits 
any of the remaining $r-1$ free edges incident to $w_j$.

Note that all permutations starting with $\pi$ are in ${\cal D}_{j-1}$, and thus \linebreak$\left({\cal D}_{j-1}\cap {\cal A}\right)(\pi)={\cal A}(\pi)$. Therefore, the number of such permutations where \RM in his next move hits one of these 
edges is at most
\[(r -1) \frac{2(1+\epsilon)}{n^2} |({\cal D}_{j-1}\cap {\cal A}) (\pi)| \]
by Lemma~\ref{technicalLemma}.
Then the number of permutations where \RM did not play any of these edges is  at least
\[\left(1-(r-1)\frac{2(1+\epsilon)}{n^2}\right)|({\cal D}_{j-1}\cap {\cal A}) (\pi)| .\]
We repeat the process for the $a$ moves of \RM, always taking a new set $\Pi$, letting the initial segment $\pi$ run until \RM's last move each time,
always conditioning that \RM has not yet claimed an edge incident to $w_j$ (i.e. allowing only such $\sigma$).
Applying Lemma~\ref{technicalLemma} iteratively, the number of  permutations where none of \RM's $a$ edges are incident to $v_j$ is at least 
\[\left(1-(r-1)\frac{2(1+\epsilon)}{n^2}\right)^a|({\cal D}_{j-1}\cap {\cal A}) (\pi)| .\]

In order to estimate the
number of permutations in which \RM does not take any edges incident to $w_j$ and hence
\CB isolates $w_j$, we repeat the above process over the relevant $r -1$ turns. The calculation is identical for each turn, except that the number of vacant edges incident to $w_j$ decreases. Taking the product over these $r-1$ turns, we obtain
\begin{eqnarray*}
|(\overline{{\cal D}_j} \cap {\cal A})(\pi)| &\geq& \prod_{\ell=1}^{r-1}\left(1-\frac{2(1+\epsilon)\ell}{n^2}\right)^a|({\cal D}_{j-1} \cap {\cal A})(\pi)|\\
&\geq & e^{-a\left(\sum_{\ell=1}^{r-1}\frac{2(1+\epsilon)\ell }{n^2}\right)-
a\left(\sum_{\ell=1}^{r-1}(\frac{2(1+\epsilon)\ell}{n^2})^2\right)} 
({\cal D}_{j-1} \cap {\cal A})(\pi)|\\
& \geq &
e^{-a(1+\epsilon) - O(\frac{a}{n}) }({\cal D}_{j-1} \cap {\cal A})(\pi)|\\
& \geq &
e^{-(1-\epsilon^2)\ln \ln n - O(\frac{\ln\ln n}{n}) }({\cal D}_{j-1} \cap {\cal A})(\pi)|\\
&\geq& \left(\ln^{-(1-\epsilon^2/2)}{n}\right)\left|({\cal D}_{j-1} \cap {\cal A})(\pi)\right|
\end{eqnarray*}
using $r\leq n-1$.

\end{proof}

\subsection{\RM builds a connected graph with minimum degree at least $k$} \label{sec:RM-structure}

The proofs of the upper bound for all games in Theorems~\ref{thm:RMCB-mindegree-k}, \ref{thm:RMCB-connectivity} and \ref{thm:RMCB-hamiltonicity} all start out the same.
First we establish that the vertices with many incident edges occupied by \CB (called ``bad vertices'') 
are well-connected to the rest of the graph (called ``good vertices''). 
Then we go on to show that the graph of Maker on the good vertices is very close to being a uniformly 
random graph. Then the upper bound in the min-degree-$k$ and connectivity game follows easily.
To prove the existence of a Hamilton cycle is somewhat more technical and is presented in its separate section.

Let $\epsilon >0 $ fixed and $a\geq(1+\epsilon)\ln{\ln{n}}$.
We consider the $(a:1)$-game on $K_n$ between \RM and \CB playing according to an arbitrary but fixed 
strategy $S$.  
First we introduce some notation. Let us fix a parameter, 
$\alpha$, $0<\alpha<\frac{1}{2}$ sufficiently small, 
such that  
\begin{equation} \label{eq:alpha}
(1+\epsilon)(1-3\alpha)^2 > 1 +\frac{\epsilon}{2}.
\end{equation}
We will consider the first  $t:=\frac{\alpha}{2}n\ln{n}$ rounds of the game and show that \RM finishes 
his job within his first $t$ turns, a.a.s.\\
The key idea of the proof is to divide the vertices in categories, based on how many incident edges \CB claims.
We call a vertex \textit{$\alpha$-bad}, if  its degree in \CB's graph is $3\alpha n$ or more and otherwise 
we call it  \textit{$\alpha$-good}. Since throughout this section $\alpha$ is fixed, we suppress it and talk about bad and good vertices.

An important observation is that during the first $t$ rounds the total degree in \CB's graph does not exceed $2t$ 
hence there cannot be more than \[\frac{2t}{3\alpha n} = \frac{2}{3\alpha n}\cdot \frac{\alpha n\ln{n}}{2} \leq \ln{n}\]
bad vertices. In other words, the vast majority of vertices, namely $n-\ln{n}$, are still good after $t$ turns.

\subsubsection{Connecting the bad vertices}

To grade the transition of a good vertex into a bad one we define the concept of a candidate vertex.
We say that a vertex $u$ is
\begin{enumerate}[(i)]
\item an {\em early candidate} if \CB claimed his $\alpha n$-th edge incident to $u$ before round $t-(1-\alpha)n$, and 
\item a {\em late candidate}, if $u$ is not an early candidate and \CB claims his $2\alpha n$-th edge incident to $u$ in a turn $s$ 
with $t-(1-\alpha)n\leq s\leq t-\alpha n$.
\end{enumerate}

Observe that every vertex that is bad at turn $t$ had to become (early or late) 
candidate in a turn $s\leq t-\alpha n$.
Indeed, if a vertex $u$  is bad then it must have had degree at least $2\alpha n$ in \CB's graph at round 
$t-\alpha n$. If $u$ is not an early candidate then it got its $\alpha n$-th edge and hence also its $2\alpha n$-th edge
after round $t - (1-\alpha)n$, so it is a late candidate. 

Note that only good vertices can become candidates and 
once a vertex becomes candidate it stays that way till the end. 
This also means in particular that {\em every} bad vertex is {\em also} a candidate.

Let us now fix an integer $k\geq 1$. In most definitions and statements that follow 
$k$ appears as a parameter, but we will suppress it if this creates no confusion.
Let us define an auxiliary digraph $D_k=D$ which is built throughout the first $t$ rounds of the game on the vertex set
$[n]$ of the $K_n$ the game is played on.  For this, we imagine that \RM occupies the $a$ edges within 
each of his turn one after another, so we can talk, without ambiguity, about an edge being occupied 
{\em before} another. 
The digraph $D$ has no edges at the beginning of the game. During the game we add edges to $D$ 
in the following two scenarios:
\begin{enumerate}[(1)] 
\item whenever a good vertex $u$ becomes (early or late) candidate at some turn of \CB, we immediately 
add to $D$ up to $k$ arbitrary arcs $(u,v)$,
such that $uv$ is occupied by \RM already, the vertex $v$ is not a candidate, and 
$d^-_{D}(v)=0$. If there are less than $k$ such edges incident to $u$ we add them all.
\item whenever \RM occupies an edge $uv$ in the game where the vertex $u$ is a candidate, 
we add the arc $(u,v)$ to $D$ if the vertex $v$ is not a candidate, $d^+_{D}(u) <k$,   and $d^-_{D}(v)=0$.
\end{enumerate}
We call the edges of $D$ {\em saviour edges}. At any point of the game an arc $(u,v)$ is called 
a \textit{potential savior edge for $u$} if the edge $uv$ is unoccupied in the game, the vertex $u$ turned candidate 
already, the vertex $v$ did not, $d^+_{D}(u) <k$,   and $d^-_{D}(v)=0$.

\begin{lem}\label{lemDAcyclic}
For the maximum in- and out-degree of $D$ we have $\Delta^-(D)\leq 1$ and $\Delta^+(D) \leq k$.
The underlying graph of $D$ is an acyclic subgraph of Maker's graph. 
\end{lem}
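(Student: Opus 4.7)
My plan is to split the statement into four pieces and dispatch three of them directly from the construction rules, reserving the real work for acyclicity. First, for $\Delta^+(D)\leq k$, I would note that both rules (1) and (2) only create an out-arc from $u$ when $d^+_D(u)<k$: rule (1) emits at most $k$ arcs at the single instant $u$ becomes a candidate, and any rule-(2) arc added subsequently is again gated by $d^+_D(u)<k$. For $\Delta^-(D)\leq 1$, both rules demand $d^-_D(v)=0$ before any arc enters $v$, so at most one arc can ever enter a vertex. For the underlying-graph-in-Maker claim, I would just observe that rule (1) only uses edges already occupied by \RM, and rule (2) fires precisely on a \RM move.

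The key structural fact I would isolate before tackling acyclicity is that every arc $(u,v)$ added to $D$ has $u$ \emph{already} a candidate and $v$ \emph{not yet} a candidate at the moment of addition. This is explicit in rule (2); for rule (1) it holds because the arcs are emitted exactly when $u$ turns candidate and the chosen targets $v$ are required to be non-candidates at that moment. Combined with the monotonicity of candidacy (once a vertex is a candidate, it stays one for the rest of the game), this lets me translate arc-creation conditions into constraints on the relative timing of arcs.

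Suppose, for contradiction, that the underlying undirected graph of $D$ contained a cycle $v_1v_2\cdots v_mv_1$. Using $\Delta^-(D)\leq 1$, I would sum in-degrees along the cycle edges: the total is $m$, each term is at most $1$, so each equals $1$, which forces the cycle to be a consistently oriented directed cycle $v_1\to v_2\to\cdots\to v_m\to v_1$ in $D$. Let $T_i$ be the turn at which the arc $(v_i,v_{i+1})$ (indices mod $m$) was added. At time $T_i$ the head $v_{i+1}$ is not a candidate, while at time $T_{i+1}$ it must be a candidate; by monotonicity of candidacy this gives $T_i<T_{i+1}$ for every $i$, hence the strictly increasing chain $T_1<T_2<\cdots<T_m<T_1$, which is impossible.

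The only non-routine step is acyclicity, and within it the single idea that has to be spotted is that the in-degree bound upgrades any undirected cycle to a directed one; once that is in hand, the timing argument via monotonicity of candidacy is essentially forced. I do not anticipate any obstacle beyond noticing this promotion step.
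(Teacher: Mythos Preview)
Your proposal is correct. The degree bounds and the Maker-subgraph claim match the paper's treatment exactly. For acyclicity, the paper takes a slightly more direct path than yours: rather than promoting an undirected cycle to a directed one and then running a timing argument, it simply observes that whenever an arc $(u,v)$ is added, the head $v$ is isolated in $D$ at that moment---it has no in-arc since $d^-_D(v)=0$ is required, and no out-arc since $v$ is not yet a candidate (out-arcs are only emitted from candidates). An arc whose head is isolated can never close a cycle, so acyclicity is immediate. Your argument uses exactly the same two ingredients (the in-degree condition and the ``head is not yet a candidate'' observation), just packaged into a cycle-plus-timing contradiction instead; both are valid, the paper's version avoids the detour through the consistently-oriented-cycle step.
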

\begin{proof} The bounds on the maximum in- and out-degree immediately follow from the rules in
(1) and (2), as it does that the underlying graph is a subgraph of Maker's graph. 
For acyclicity it is enough to check that at the time an arc $(u,v)$ is added to $D$, the tail vertex $v$ 
is isolated in $D$. 
For this first note that $d^-_{D_k}(v)=0$, so $v$ has no incoming arc. For $v$ to have some out-going 
arc $(v,w)$ in $D$, $v$ has to be a candidate already. But adding the arc $(u,v)$ requires that $v$ did not turn candidate yet, a contradiction.
Hence $v$ was isolated before the addition of $(u,v)$.
\end{proof}

In the remainder of this section we show that a.a.s. every bad vertex has out-degree $k$ in $D$.  

\begin{lem}\label{LemKOutgoingFast}
For
every vertex $u\in [n]$, the following holds:
\begin{enumerate}[(i)]
\item\label{lemKOutgoingFastEarly} If $u$ is an early candidate,
then $(1-3\alpha)n$ rounds after it turned candidate, $d^+_{D}(u) = k$  with probability $1-o(\ln^{-1} n)$.
\item\label{lemKOutgoingFastLate} If $u$ is a late candidate, 
then $\alpha n$ rounds after it turned candidate, $d^+_{D}(u) = k$ with probability $1-o(1)$.
\end{enumerate}
\end{lem}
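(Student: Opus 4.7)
My plan is to lower bound the number $X$ of Maker-moves inside the window $W$ after round $s_0$ (of length $(1-3\alpha)n$ in case~(i) and $\alpha n$ in case~(ii)) that land on an edge $uv$ with $v$ a good target (non-candidate, $d^-_D(v)=0$) at the moment of play. Each such move adds an out-arc at $u$ via rule~(2), so showing $X \geq k$ with the claimed probability suffices (a contribution from rule~(1) only helps). Throughout, I will exploit the permutation strategy of Proposition~\ref{prop:permutationStrategy} to treat Maker's randomness as one up-front uniform permutation of $E(K_n)$, enabling a standard Chernoff-type argument.

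Two key estimates drive the analysis. \textbf{(a) Few bad targets:} summing Breaker's degrees up to round $t$ yields at most $\ln n$ candidates and hence at most $O(\ln n)$ bad targets (candidates together with the in-degree-$1$ vertices of $D$). \textbf{(b) Many free good edges at $u$:} since Breaker has unit bias, his degree at $u$ during $W$ obeys $d_B(u,s)\leq (1-2\alpha)n$ in case~(i) and $d_B(u,s)\leq 3\alpha n$ in case~(ii). Together with~(a), whenever $d_M(u,s)=o(n)$ the number of free edges from $u$ to good targets at any $s\in W$ is at least $2\alpha n - o(n)$ in case~(i) or $(1-3\alpha)n - o(n)$ in case~(ii). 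In the complementary case $d_M(u,s) = \Omega(n)$ many Maker-edges at $u$ have already appeared in $W$; by~(a), almost all go to good targets, so $X \geq k$ holds trivially.

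Hence each of the $\Theta(a|W|)$ Maker moves in $W$ hits a free good edge at $u$ with probability $\Omega(1/n)$. Integrating over $W$ yields
\[\mu := E[X] \geq (1+\epsilon)(1-3\alpha)^2 \ln\ln n \geq (1+\epsilon/2)\ln\ln n\]
in case~(i), using~\eqref{eq:alpha}; and $\mu \to \infty$ in case~(ii). A standard Chernoff lower-tail bound, valid because the permutation strategy turns these indicator variables into an exchangeable process well-approximated by independent Bernoullis, gives
\[\prob{X<k} \leq e^{-(1-o(1))\mu},\]
which evaluates to $o(\ln^{-1}n)$ in case~(i) and $o(1)$ in case~(ii), as required.

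The main obstacle is that Maker's moves are not literally i.i.d.: Breaker's adaptive play shrinks the free-edge pool and the set of good targets evolves as new candidates and in-degree-$1$ vertices appear during $W$. The permutation strategy handles the first issue by committing Maker's random choices in advance, and estimate~\textbf{(a)} bounds the total evolution of the bad-target set, keeping the conditional per-move probabilities within a $(1\pm o(1))$ factor of the unconditional ones. The slack provided by~\eqref{eq:alpha} is precisely what is needed to absorb both this factor and the Chernoff loss, producing the $o(\ln^{-1}n)$ tail in case~(i).
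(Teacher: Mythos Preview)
Your approach is essentially the paper's: lower-bound the conditional probability that each Maker move in the window lands on a potential saviour edge for $u$, then show that fewer than $\ell=k-d^+_D(u)$ successes occur with the required small probability. The paper does the second step by an explicit union bound over the set $J$ of successful rounds, which is just the Poisson-type tail you call ``Chernoff''; there is no real difference in strategy.

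However, your write-up has a quantitative gap in case~(i). Estimate~\textbf{(b)} records only the \emph{worst-case} Breaker-degree over the window, $d_B(u,s)\le(1-2\alpha)n$, giving at least $2\alpha n-o(n)$ free good edges throughout $W$. With that uniform bound you obtain only $\mu\ge 4\alpha(1-3\alpha)\,a=\Theta(\alpha\ln\ln n)$, and then $e^{-\mu}=(\ln n)^{-\Theta(\alpha)}$ is \emph{not} $o(\ln^{-1}n)$ for small $\alpha$. What is actually needed---and what the paper does---is the time-dependent bound $d_B(u,s_0+i)\le 2\alpha n+i$, so that there are at least $(1-3\alpha)n-i$ potential saviour edges at step~$i$; summing $\tfrac{2a}{n^2}\bigl((1-3\alpha)n-i\bigr)$ over $0\le i\le(1-3\alpha)n$ then gives $\mu\ge a(1-3\alpha)^2$, which is exactly your claimed value and what makes~\eqref{eq:alpha} relevant. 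Your phrase ``integrating over $W$'' suggests you have this in mind, but estimate~\textbf{(b)} as stated does not deliver it.

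A second, smaller point: the justification ``the permutation strategy turns these indicator variables into an exchangeable process'' is not right---Breaker is adaptive, so the indicators are certainly not exchangeable. The correct mechanism, implicit in the paper's calculation, is that conditioned on \emph{any} history with $d^+_D(u)<k$, the next Maker move succeeds with probability at least the deterministic quantity above; hence $X$ stochastically dominates a sum of independent Bernoullis with those parameters, and the tail $\prob{X<k}=O(\mu^{k-1}e^{-\mu})$ follows. Your final sentence about ``conditional per-move probabilities'' is the right idea; lead with that and drop the exchangeability claim.
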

\begin{proof}
Let $u$ be a vertex which turns candidate in round $t_u$ and assume that  $d^+_{D}(u) < k$ at that point 
(otherwise we are done). Let $\ell:= k-d^+_{D}(u)$ be the number of saviour edges $u$ must still collect.
For $i = 1, \dots, (1-3\alpha)n$ let $E_i$ be  the event that no saviour edge from  $u$ is added to $D$ at turn 
$t_u+i$ of \RM. If at least $\ell$ of the events $E_i$ do not hold, 
then $u$ has out-degree $k$.\\
We are interested in the probability
$$ p_i:=\prob{ E_i\given \text{there are less than $\ell$ rounds $t_u+j$, $j<i$, s.t. $E_j$ holds} }.$$
How many potential saviour out-edges are there for $u$? 
Since $u$ turns candidate in round $t_u$ and \CB claims at most one incident edge per turn, by round $t_u+i$ \CB has 
claimed at most $2\alpha n+i$ edges incident to $u$ (this holds for both early and late candidates). 
There are at most $2t/\alpha n = \ln n$ vertices that turned candidate before round $t$ 
and each of these might have at most $k$ outneigbors. These are at most $(k+1)\ln n$ further vertices 
to where there is no potential saviour edge from $u$.
Thus, there are at least $n-1- (k-1) - (2\alpha n + i) -(k+1)\ln n \geq (1-3\alpha)n-i$ potential $k$-savior edges from 
$u$. Hence every edge \RM claims in round $t_u+i$ has probability at least $\frac{(1-3\alpha)n-i}{{n\choose 2}}$ to be a 
savior edge for $u$. This implies that $p_i$, the probability that \RM does not claim any savior edge in his $a$ moves 
this round can be estimated as
\begin{align*}
p_i&\leq\left(1-\frac{(1-3\alpha)n-i}{{n\choose 2}}\right)^a < e^{-\frac{2a}{n}\left(1-3\alpha - \frac{i}{n}\right)}
\end{align*}
Conversely, there are at least ${n\choose 2}-(a+1)t= {n\choose 2} - o(n^2)$ free edges in total, therefore 
\begin{align*}
p_i&\geq\left(1-\frac{n-1}{{n\choose 2}-o(n^2)}\right)^a=\left(1-\frac{2+o(1)}{n}\right)^a > e^{-\frac{2a}{n}(1+o(1))} 
\end{align*}
We now consider the first $Cn$ rounds after $t_u$, for a constant $0<C<1$ (for (\ref{lemKOutgoingFastEarly}) 
we choose $C=1-3\alpha$, for (\ref{lemKOutgoingFastLate}) we choose $C=\alpha$).
Fix now an integer $j$, $0\leq j \leq \ell -1$ and let $q_j$ be the probability that there are exactly $j$ rounds where 
a saviour edge from $u$ is occupied by \RM.  Then the probability that $d^+_D(u) < k$ after round $t+Cn$ is
$\sum_{j=0}^{\ell-1} q_j$.

We classify these bad events according to the set $J\in {[Cn]\choose j}$ for which a saviour edge for $u$ 
was occupied by \RM exactly in rounds $t_u+h$, $h\in J$, and apply the union bound: 
\begin{align*} q_j \leq &  \sum_{J\in {[Cn]\choose j}} \prod_{h\in J} \left(1-p_{h}\right)\prod_{i\in [Cn]\setminus J} p_i \\
\leq & {Cn \choose j} \left(1- e^{-\frac{2a}{n}(1+o(1))} \right)^j \prod_{i\in [Cn]\setminus [j]} e^{-\frac{2a}{n}\left(1-3\alpha - \frac{i}{n}\right)} \\
\leq &  n^j \left(\frac{2a}{n}(1+o(1))\right)^j e^{- \frac{2a}{n}((1-3\alpha) (Cn -j)  - \sum_{i\in [Cn]\setminus [j]} \frac{i}{n})} \\
\leq & O((\ln \ln n)^j) e^{-2a (C (1-3\alpha) - \frac{C^2}{2} +o(1))} 
\end{align*}
In the second line we use that $e^{-\frac{2a}{n}\left(1-3\alpha - \frac{i}{n}\right)}$ is monotone increasing in $i$.
For (\ref{lemKOutgoingFastEarly}), we choose $C=1-3\alpha$ and obtain
\begin{align*}
q_j&\leq  O\left(\left(\ln{\ln{n}}\right)^j e^{-a((1-3\alpha)^2 +o(1))}\right)\\
& \leq O\left(\left(\ln{\ln{n}}\right)^j\ln^{-(1+\epsilon)((1-3\alpha)^2 +o(1))}n\right)\\
&=O\left(\ln^{-1-\frac{\epsilon}{2}}n\right). 
\end{align*}

For (\ref{lemKOutgoingFastLate}), we choose $C=\alpha$ and obtain
\begin{align*}
q_j&\leq O\left(\left(\ln{\ln{n}}\right)^j \ln^{-2(1+\epsilon)\alpha(1-\frac{7\alpha}{2}) +o(1)}n\right)\\
&=o(1)
\end{align*}
As $\ell < k$ is constant,
summing over these estimates for $j=0,1\dots,\ell -1$ gives the result in both cases (\ref{lemKOutgoingFastEarly})
and (\ref{lemKOutgoingFastLate}).
\end{proof}

\begin{cor}\label{corMinDegBad} For every $\epsilon>0$ there exists an $\alpha >0$, such that for every $k$ and 
every strategy $S$ of \CB the following holds a.a.s.
In the $(a:1)$-biased \RM-\CB game with $a=(1+\epsilon)\ln\ln n$ and \CB playing with strategy $S$, we have $d^+_{D_k}(u) =k$ 
for every $\alpha$-bad vertex $u$ by the end of round $t$.
\end{cor}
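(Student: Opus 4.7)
The plan is to apply Lemma~\ref{LemKOutgoingFast} separately to bad early candidates and bad late candidates, and combine the results by a union bound; since every bad vertex is a candidate by definition, this covers all bad vertices.

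For bad early candidates, I would invoke a simple degree-sum argument to bound their number. Every candidate has Breaker-degree at least $\alpha n$ by round $t$, and the total Breaker-degree equals $2t = \alpha n \ln n$, so there are at most $\ln n$ candidates in total, and in particular at most $\ln n$ bad early candidates. Any such vertex became candidate by round $t - (1-\alpha)n$, so it has had at least $(1-\alpha)n \geq (1-3\alpha)n$ rounds available by round $t$, and Lemma~\ref{LemKOutgoingFast}(\ref{lemKOutgoingFastEarly}) yields $d^+_{D}(u) = k$ with failure probability $o(\ln^{-1}n)$. The union bound over these at most $\ln n$ vertices contributes only $o(1)$.

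The late candidates are the delicate case: Lemma~\ref{LemKOutgoingFast}(\ref{lemKOutgoingFastLate}) only gives a $1-o(1)$ per-vertex success probability, so a naive union bound over all late candidates (potentially up to $\ln n$ of them) would blow up. The key observation I would exploit is that there are only $O(1)$ bad late candidates. Indeed, if $u$ is a bad late candidate with candidate-round $s_u$, then $s_u \geq t-(1-\alpha)n$, $d^{\CB}_{s_u}(u) = 2\alpha n$, and $d^{\CB}_t(u) \geq 3\alpha n$, so Breaker claimed at least $\alpha n$ edges incident to $u$ within the window $[s_u+1,t] \subseteq [t-(1-\alpha)n+1,t]$. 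This enclosing window contains only $(1-\alpha)n$ Breaker turns, each contributing at most $2$ vertex-endpoints, so double-counting yields $N\cdot \alpha n \leq 2(1-\alpha)n$, where $N$ is the number of bad late candidates, hence $N \leq 2(1-\alpha)/\alpha = O(1)$. A union bound over these $O(1)$ vertices with per-vertex failure probability $o(1)$ again contributes only $o(1)$.

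Combining the two union bounds shows that $d^+_{D}(u) = k$ for every bad vertex $u$ a.a.s. The main obstacle --- and essentially the only new content beyond invoking Lemma~\ref{LemKOutgoingFast} --- is the double-counting bound on bad late candidates, which is precisely what makes the weaker part~(\ref{lemKOutgoingFastLate}) of the lemma sufficient despite its per-vertex bound being only $1-o(1)$.
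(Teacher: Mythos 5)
Your proposal is correct and takes essentially the same approach as the paper: decompose bad vertices into early and late candidates, apply Lemma~\ref{LemKOutgoingFast}(i) plus a union bound over the at most $\ln n$ early candidates, and apply Lemma~\ref{LemKOutgoingFast}(ii) plus a union bound over an $O(1)$-size set of late candidates. The only cosmetic difference is that the paper bounds \emph{all} late candidates (degree jump from $\leq\alpha n$ to $\geq 2\alpha n$ inside the window of length $(1-2\alpha)n$, giving $\leq 2(1-2\alpha)/\alpha$), whereas you bound only the \emph{bad} late candidates (degree jump from $2\alpha n$ at the candidate round to $\geq 3\alpha n$ at round $t$, inside a window of length $(1-\alpha)n$, giving $\leq 2(1-\alpha)/\alpha$); both are $O(1)$ and equally serviceable.
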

\begin{proof}
Recall that every bad vertex is an early or late candidate. 

By Lemma~\ref{LemKOutgoingFast}(i) the probability that any early candidate vertex does not have out-degree $k$
in $D_k$ by round $t- 2\alpha n$ is $o(\ln^{-1}n)$. 
Since there are at most $\frac{2t}{\alpha n}=\ln n$ early candidates, the union bound gives
that a.a.s. all early candidates have out-degree $k$ by round $t$.

By Lemma~\ref{LemKOutgoingFast}(ii) the probability that a late candidate vertex does not have out-degree $k$
in $D_k$ by round $t$ is $o(1)$. Now we claim that the number of late candidate vertices is 
$O(1)$ and hence applying the union bound again we get that they all have out-degree $k$ by round $t$ a.a.s.
 Indeed, since each late candidate has at most degree $\alpha n$ in \CB's graph at round $t-(1-\alpha)n$,
 \CB needed to claim at least $\alpha n$ incident edges at each late candidate in the next $(1-2\alpha)n$ rounds. 
 Thus, there can be at most $2\frac{(1-2\alpha)}{\alpha} = O(1)$ late candidate vertices, as promised.
\end{proof}

\begin{cor}\label{cor:PathsOnB} For every $\epsilon>0$ there exists $\alpha >0$, such that 
for every strategy $S$ of \CB the following holds a.a.s.
In the $(a:1)$-biased \RM-\CB game with $a=(1+\epsilon)\ln\ln n$ and \CB playing with strategy $S$, 
there are vertex-disjoint paths $P_1,\dots,P_\kappa$ in
\RM's graph that cover all $\alpha$-bad vertices and have their start- and endpoints, 
and only these, among the $\alpha$-good vertices.
\end{cor}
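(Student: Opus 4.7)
The plan is to apply Corollary~\ref{corMinDegBad} with $k=2$, which provides an $\alpha>0$ such that a.a.s.\ every $\alpha$-bad vertex $u$ satisfies $d^+_{D_2}(u)=2$ by the end of round $t$. I would let $v_u^1, v_u^2$ denote the two saviour out-neighbours of $u$ in $D_2$. By construction of $D_2$, saviour arcs are only added with heads that are \emph{non-candidates}, so both $v_u^1$ and $v_u^2$ are non-candidates and in particular $\alpha$-good. They are distinct, because rules (1) and (2) in the definition of $D_k$ both forbid adding a second arc into a vertex that already has $d^-_{D_2}\ge 1$, and so the two outgoing arcs of $u$ must have different heads.

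For each bad vertex $u$, I would then declare $P_u := v_u^1\, u\, v_u^2$ to be the required path. By Lemma~\ref{lemDAcyclic}, the underlying graph of $D_2$ is contained in Maker's graph, so $P_u$ is indeed a path in Maker's graph; its interior consists of the single vertex $u$, which is bad, and its two endpoints are good. Thus the collection $\{P_u : u \text{ is }\alpha\text{-bad}\}$ tautologically covers every $\alpha$-bad vertex and has every good vertex appearing only as an endpoint, as required.

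The one thing left is vertex-disjointness of the $P_u$. Distinct bad vertices are pairwise distinct, and no $v_u^i$ can coincide with a bad vertex, since bad vertices are candidates whereas saviour heads are non-candidates. Finally, two different bad vertices $u\neq u'$ cannot share a saviour out-neighbour $v$: otherwise $v$ would have in-degree at least $2$ in $D_2$, contradicting $\Delta^-(D_2)\le 1$ from Lemma~\ref{lemDAcyclic}. Hence the triples $\{v_u^1, u, v_u^2\}$ are pairwise disjoint across bad $u$, and the corollary follows. There is no serious obstacle here: the substantive work has already been packaged into Corollary~\ref{corMinDegBad}, and this step amounts to reading off the combinatorial structure of $D_2$ that was built to make exactly such a paths-through-bad-vertices decomposition available.
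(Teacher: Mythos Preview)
There is a genuine gap. You assert that the two out-neighbours $v_u^1,v_u^2$ of a bad vertex $u$ in $D_2$ are non-candidates, and hence $\alpha$-good. But the rules (1) and (2) only guarantee that the head $v$ of a saviour arc is a non-candidate \emph{at the moment the arc is added}. Nothing prevents $v$ from becoming a candidate later and eventually turning $\alpha$-bad by round $t$. Concretely: suppose $u$ turns candidate early and receives the arc $(u,v)$ while $v$ is still a non-candidate; afterwards \CB may pile $3\alpha n$ edges onto $v$, so $v$ becomes a candidate (and bad) in its own right, picking up its own two out-arcs. This is perfectly consistent with $\Delta^-(D_2)\le 1$ and with acyclicity. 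In that situation your ``path'' $P_u=v_u^1\,u\,v_u^2$ has a bad endpoint, and the bad vertex $v$ is covered by two of your paths ($P_u$ and $P_v$), destroying vertex-disjointness as well.

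The paper's proof anticipates exactly this: starting from a bad vertex $v$ with in-degree $0$ in $D_2$, it follows out-arcs \emph{iteratively}, continuing through any bad out-neighbour (which, being bad, also has out-degree $2$) until a good vertex is finally reached; acyclicity of $D_2$ (Lemma~\ref{lemDAcyclic}) ensures termination. The resulting path may traverse several bad vertices in its interior before hitting good endpoints. One then deletes this path from $D_2$ and repeats; the key observation is that every removed non-root vertex had its unique in-neighbour removed too, so remaining bad vertices still have full out-degree $2$. Your three-vertex paths are the special case where this walk terminates after one step in each direction, but that case is not the general one.
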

\begin{proof}
Let us use Corollary~\ref{corMinDegBad} with $k=2$, so we can assume that every bad vertex  has out-degree $2$
in $D_2$.  We start at an arbitrary bad vertex $v$ having no in-degree (such a vertex exists, since $D_2$ is acyclic by Lemma~\ref{lemDAcyclic}). We follow both of its outgoing edges in $D_2$ to create two vertex disjoint directed paths from $v$.
If we reach a good vertex we stop and choose it as the endpoint of our path. 
Otherwise, i.e. if the reached vertex $v'$ is bad, then it has out-degree $2$ in $D_2$, and we continue along 
one of the out-going edges. Since $D_2$ is acyclic and the number of bad vertices is finite, 
we must reach a good vertex eventually. 
Once both directed paths from $v$ are completed, their union in the underlying undirected graph of \RM 
forms a path $P_1$ with good endpoints and bad interior vertices. We remove the vertices of $P_1$ 
from $D_2$ and continue iteratively with a bad vertex that does not have an incoming edge, until there are no bad
vertices left. Note that, crucially, after the iterative removal of such rooted paths, 
all remaining vertices still have all their out-going edges, 
hence all remaining bad vertices still have out-degree $2$. Indeed, all vertices have in-degree at most $1$ and those 
with in-degree exactly $1$ that were removed also had their ancestor removed.
\end{proof}

\subsubsection{On the good vertices}
\label{sec:goodvertices}

Now that we have ``anchored" the bad vertices, let us turn to the good vertices. 
We show that the graph spanned by them is close enough to a truly random graph and 
make use of the strong expander properties of the latter.
To make these notions more precise, we switch to the point of view, where \RM's turns are determined by a random permutation $\sigma$. 

We consider the first $at$ random edges of $\sigma$ which surely were all ``tried'' to be
played by \RM in the first $t$ rounds. However he might not actually own each of these, because \CB
might have taken some of them by the time they were tried by \RM.
In the greatest generality, to be able to do multi-round exposure later, we consider subsets 
$M\subset [at]$ of coordinates of $\sigma$ and we will be interested in the truly random graph  
$\Gsigma{\sigma}{M}=\G{M}$ that consist of the edges exactly at these coordinates, that is, 
\begin{align*}
E\left(\G{M}\right)=\{\sigma(m) : m\in M \}.
\end{align*}
Note that the notion of $\Gsigma{\sigma}{[i]}$ coincides with the notion of $\Gsigma{\sigma}{i}$ defined earlier.

We define now a set of edges that will be ``forbidden'' for our analysis.
Recall that we fixed a strategy $S$ for \CB. Let $\HsigmaS{M} = \Hgraph{M}$ be the graph defined on the vertex set $[n]$ 
containing those edges $uv$ for which $uv \in \sigma(M)$ and for 
both $u$ and $v$ the edge $uv$ was among the first $3\alpha n$ incident edges which
\CB, playing according to $S$, claimed in the first $t$ rounds, when the permutation game 
according to $\sigma$ was played. 

The crucial point of this definition is the following simple lemma:

\begin{lem}\label{lemGoodBlocked}
Let $\sigma$ be an arbitrary permutation of the edges of $K_n$. Then for every subset $M\subseteq [at]$
the graph $\G{M} - E\left(\Hgraph{M}\right) - B$, with $B$ being the set of $\alpha$-bad vertices after $t$ rounds,
is a subgraph of \RM's graph.
\end{lem}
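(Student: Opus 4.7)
The plan is to argue by contrapositive: for an arbitrary edge $e=uv$ with both endpoints $\alpha$-good, I will show that if $e\in\sigma(M)$ but $e\notin E(\Hgraph{M})$, then \RM owns $e$.

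First I would set up the bookkeeping coming from the permutation strategy. Since in each of his $t$ turns \RM claims exactly $a$ distinct permutation positions and skips at most the $t$ positions claimed by \CB, the largest index $m_t$ that \RM has read by the end of round $t$ satisfies $at\leq m_t\leq(a+1)t$. In particular every index in $M\subseteq[at]$ has been processed by \RM within the first $t$ rounds, so for each $m\in M$ the edge $\sigma(m)$ is either in \RM's graph or was claimed by \CB (within the first $t$ rounds) before \RM reached position $m$.

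Now suppose $e=uv\in\sigma(M)$ with $u,v\notin B$ and assume for contradiction that $e$ is not a Maker edge. By the preceding observation, \CB then claimed $e$ within the first $t$ rounds. Because $u$ is $\alpha$-good, its total degree in \CB's graph after $t$ rounds is strictly less than $3\alpha n$, so \emph{every} \CB-edge incident to $u$ -- and $uv$ in particular -- is among the first $3\alpha n$ edges incident to $u$ that \CB claims in the first $t$ rounds. The same conclusion holds at $v$. This is exactly the defining condition for $uv\in E(\Hgraph{M})$, contradicting $e\notin E(\Hgraph{M})$.

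Honestly there is no serious obstacle here: the lemma is essentially a reformulation of the fact that ``$\alpha$-good'' has been engineered so that all of a good vertex's \CB-edges qualify as ``early'' \CB-edges. The only detail I would be careful about is keeping the quantifiers straight -- ``first $3\alpha n$'' must hold at \emph{both} endpoints and the claim must happen within the first $t$ rounds -- so that the conditions matched are precisely those of $\Hgraph{M}$.
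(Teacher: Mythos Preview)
Your proof is correct and follows essentially the same argument as the paper's own proof: both show that if $uv\in\sigma(M)$ with $u,v$ good and $uv\notin E(\Hgraph{M})$, then \CB cannot have claimed $uv$ in the first $t$ rounds (since goodness forces any such \CB-edge to satisfy the defining condition of $\Hgraph{M}$), and hence \RM must own it because every position in $[at]$ is processed by \RM within $t$ rounds. The only cosmetic difference is that you phrase it as a contrapositive while the paper argues directly.
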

\begin{proof}
Let $uv$ be an edge of $\G{M} - E\left(\Hgraph{M}\right) - B$. Then $u$ and $v$ are both good vertices 
after $t$ rounds and hence  \CB's degree at both of them is at most $3\alpha n$. 
Thus, since $uv \in \sigma(M)$, if $uv$ would have been claimed by \CB 
up to round $t$ then $uv$ would be in $E\left(\Hgraph{M}\right)$. 
Consequently the edge $uv$ was not claimed by \CB in 
the first $t$ rounds. 
Now, since $uv\in \sigma(M) \subseteq \sigma([at])$ and \RM did try to claim the first at least $at$ edges of 
$\sigma$ in the first $t$ rounds, he must have claimed $uv$ by that time.
\end{proof}

The following lemma ensures that  not too many edges of cuts $(X,\overline{X}) : = \{xy : x\in X, y \in V\setminus X\}$
of $\G{[at]}$ are  ``blocked'' by \CB as one of its first $3\alpha n$ edges at the
endpoints. 
In particular, every vertex has small degree in $\Hgraph{[at]}$.

\begin{lem}\label{lemBlockedLocalUpperbound} The following is true a.a.s.
For every subset $X\subseteq [n]$,
we have that 
\begin{align*}
\left| E(\Hgraph{[at]}) \cap (X,\overline{X})\right| \leq \frac{8e\alpha at|X|}{n}
\end{align*}
\end{lem}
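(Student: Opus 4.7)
The plan is to reduce the lemma to a single-vertex estimate and close by a union bound. Any cut edge $uv \in E(\Hgraph{[at]}) \cap (X, \bar X)$ with $u \in X$ must, by the definition of $H$, lie in $B_u \cap \sigma([at])$, where $B_u$ denotes the (random) set of \CB's first $3\alpha n$ edges at $u$, a set of size at most $3\alpha n$ deterministically. Summing over $u \in X$ counts each cut edge in $H([at])$ at least once, giving
\[\bigl|E(\Hgraph{[at]}) \cap (X, \bar X)\bigr| \;\leq\; \sum_{u\in X} |B_u \cap \sigma([at])|.\]
It would thus suffice to show, with probability $1-o(1/n)$, that any single vertex $u$ satisfies $|B_u \cap \sigma([at])| \leq \tfrac{8e\alpha at}{n}$; a union bound over $u \in [n]$ followed by summation over $u \in X$ then delivers the claimed cut bound simultaneously for every $X$, killing the $2^n$ choices of $X$ without any direct union bound over subsets.

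For a fixed $u$, the plan is to establish a Markov/counting estimate of the form
\[\Pr\!\bigl[|B_u \cap \sigma([at])| \geq k\bigr] \;\leq\; \binom{at}{k}\left(\frac{6\alpha}{n}\right)^k \;\leq\; \left(\frac{6e\alpha at}{nk}\right)^k.\]
The per-position factor $\tfrac{6\alpha}{n}$ would decompose as $\tfrac{2}{n}$ (marginal probability that the random edge at a given position of $\sigma$ is incident to $u$) times $3\alpha$ (conditional probability, given incidence, that such an edge lies in the set $B_u$, of size at most $3\alpha n$ out of the $n-1$ edges at $u$). Plugging in $k = \lceil 8e\alpha at/n \rceil$ makes the right-hand side at most $(3/4)^k$, and since
\[k \;=\; \Theta\!\bigl(\alpha^2 a \ln n\bigr) \;=\; \Omega(\ln n \cdot \ln\ln n)\]
for $a \geq \ln\ln n$, this tail is comfortably $o(1/n)$, closing the union bound over $u \in [n]$.

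The hard part will be making the per-position factor $\tfrac{6\alpha}{n}$ honest, because $B_u$ is itself a function of $\sigma$ via the play of the game against the fixed strategy $S$, so the usual ``uniform sampling from a fixed set'' reasoning does not apply directly. I would address this by a swap/exchangeability argument on $\sigma$: conditioning on an edge $e$ sitting at an early position of $\sigma$ only makes $e$ \emph{less} likely to belong to $B_u$, because \RM reaches (and claims) $e$ the sooner, leaving \CB less opportunity to capture $e$ among his first $3\alpha n$ edges at $u$. Combined with the deterministic cap $|B_u| \leq 3\alpha n$, this negative-correlation type statement would supply the per-position estimate, and plugging back into the counting bound above completes the proof.
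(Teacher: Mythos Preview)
Your per-vertex reduction is correct and is cleaner than the paper's route: the paper bounds $|E(H)\cap(X,\bar X)|$ directly for each fixed $X$ and then union-bounds over all $2^n$ subsets, whereas you observe that a uniform bound on $|B_u\cap\sigma([at])|$ over the $n$ vertices $u$ immediately gives the cut bound for every $X$ by summation. This kills the exponential union bound and is a genuine simplification. Both approaches, however, hinge on the same technical core, namely a conditional per-position estimate of the form
\[
\Pr\bigl[\sigma(j)\in B_u \bigm| \sigma|_{[j-1]}\bigr]\;\le\;\frac{3\alpha n}{\binom{n}{2}-at}\;\le\;\frac{7\alpha}{n},
\]
which is what lets one multiply across $k$ chosen positions to get the $\binom{at}{k}(C\alpha/n)^k$ tail you want.

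The gap is in your justification of this conditional bound. Your swap heuristic (``putting $e$ early in $\sigma$ makes $e$ less likely to be in $B_u$'') is plausible for a single unconditioned position but does not obviously iterate: to get the product bound you need the estimate to hold \emph{conditionally on any pattern of earlier hits}, and a monotonicity statement about one edge does not supply that. The clean observation you are missing, and which the paper uses, is that whether $\sigma(j)\in B_u$ is \emph{determined} by $\sigma|_{[j-1]}$ alone. Indeed, after \RM has swept through $\sigma(1),\dots,\sigma(j-1)$ the entire game history up to that point is fixed; either $\sigma(j)$ has already been claimed by \CB (so its rank among \CB's edges at $u$ is already decided), or it is still free and \RM takes it in his next move (so it never enters $B_u$). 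Hence, given $\sigma|_{[j-1]}$, the set of ``bad'' values for $\sigma(j)$ is a fixed subset of at most $3\alpha n$ edges, and $\sigma(j)$ is uniform on the remaining $\ge\binom{n}{2}-at$ edges. This gives the conditional bound directly, with no correlation argument needed, and then your counting and union bound over $u\in[n]$ close the proof exactly as you outlined.
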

\begin{proof}
We write $H=\Hgraph{[at]}$. We create a random permutation $\sigma$ coordinate-wise. 
The crucial observation is that whether $\sigma(j) \in E(H)$ for some $j \in [at]$
depends only on the initial segment of the first $j-1$ edges of $\sigma$. 
Indeed, for  $\sigma(j)$ to be in $E(H)$, we need that at both endpoints it is one of the first 
$3\alpha n$ edges \CB claims when Maker plays according to $\sigma$. 
After Maker swiped through the first $j-1$ edges of $\sigma$, two things can happen:
either $\sigma(j)$ was taken by \CB in the game and hence was decided already whether 
it is one of the first $3\alpha n$ \CB-edges at both of its endpoints. 
If $\sigma(j)$ was not taken in the game, then Maker takes it in its next move and hence $\sigma(j)$ will not become 
part of $H$ later either. 

Hence, conditioning on any initial segment $\pi\in S_{E(K_n)}^{j-1}$, the
probability that the next edge $\sigma(j)$ is in $E(H)\cap\left(X,\overline{X}\right)$ depends only on
whether it is one of the at most $3\alpha n |X|$ edges that are already in $\Hgraph{[i-1]}$ and go
between $X$ and its complement. 
Furthermore, given that $\sigma$ starts with $\pi$, $\sigma(j)$ can take at least 
${n\choose 2}-at$ different values, each equally likely. 
Thus,
\begin{align*}
\prob{\sigma(j)\in E(H)\cap \left(X,\overline{X}\right) \given  \sigma |_{[j-1]} = \pi } \leq \frac{3\alpha n|X|}{{n\choose 2}-at}\leq\frac{7\alpha |X|}{n},
\end{align*}
for large $n$.
For our main estimate we can classify according to the set $L$ of coordinates where
the corresponding edges of $\sigma$ are from $E(H)\cap \left(X,\overline{X}\right)$ and apply the union bound:
\begin{align*}
\prob{E(H)\cap \left(X,\overline{X}\right) \geq\frac{8e\alpha at |X|}{n} }
&\leq\sum_{\substack{L\subset [at],\\ \size{L}=\frac{8e\alpha at|X|}{n}}} 
\prob{ \forall j\in L \, :\, \sigma(j)\in E(H)\cap \left(X,\overline{X}\right)}\\
&\leq {at\choose {\frac{8e\alpha at|X|}{n}}}\left(\frac{7\alpha |X|}{n}\right)^{\frac{8e\alpha at |X|}{n}}\\
&\leq \left(\frac{eat}{\frac{8e\alpha at |X|}{n}}\right)^{\frac{8e\alpha at |X|}{n}}\left(\frac{7\alpha |X|}{n}\right)^{\frac{8e\alpha at |X|}{n}}\\
&= \left(\frac{7}{8}\right)^{\frac{8e\alpha at |X|}{n}}
\end{align*}
Taking the union bound over all cuts $(X,\overline{X})$, we see that 
\begin{align*}
\sum_{s=1}^{n/2}{n\choose s} \left(\frac{7}{8}\right)^{\frac{8e \alpha at s}{n}}\leq 
\sum_{s=1}^{n/2}\left( n\left(\frac{7}{8}\right)^{4e\alpha^2\ln{n}\ln{\ln{n}} }\right)^s=o(1).
\end{align*}
\end{proof}

We also need the following standard fact from random graph theory; for completeness we include a proof in the Appendix.

\begin{lem}\label{lem:EdgesToComplement}
For all $\delta>0$, the following holds a.a.s in the random graph $G(n,m)=G$ with $m=\delta n\ln{n}\ln{\ln{n}}$.  
For every vertex set $X\subset [n]$ of size $|X| \leq \frac{n}{2}$, we have $$E(G)\cap\left(X,\overline{X}\right)\geq \size{X}\frac{m}{2n}.$$
\end{lem}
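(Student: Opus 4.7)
The plan is to treat this as a routine Chernoff plus union bound estimate on cut sizes in $G(n,m)$. I fix a subset $X\subseteq [n]$ with $|X|=s$, where $1\leq s\leq n/2$, and note that the number of edges of $G(n,m)$ that cross the cut $(X,\overline{X})$ is hypergeometrically distributed: we draw $m$ edges from the $\binom{n}{2}$ edges of $K_n$, exactly $s(n-s)$ of which lie in the cut. Its expectation therefore equals
\[
\mu_s \;:=\; \frac{m\cdot s(n-s)}{\binom{n}{2}},
\]
and since $n-s\geq n/2$ we get $\mu_s \geq sm/n = \delta s\ln n\ln\ln n$. In particular $\mu_s/2 \geq sm/(2n)$, so establishing a lower tail bound around $\mu_s/2$ is exactly what the lemma demands.

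Next I would invoke the standard Chernoff lower tail for the hypergeometric distribution (which holds with the same constants as for sums of independent Bernoullis, so can alternatively be deduced by the usual $G(n,p)$ transfer): for every $s$-set $X$,
\[
\Pr\!\left[\, |E(G)\cap(X,\overline{X})| \leq \mu_s/2 \,\right] \;\leq\; e^{-\mu_s/8} \;\leq\; e^{-\delta s\ln n\ln\ln n/8}.
\]
Thus for each fixed $X$ the bad event fails with probability that has a very large negative power of $n$.

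Finally I union-bound over all $\binom{n}{s}\leq n^s$ subsets of size $s$ and over $s=1,\ldots,\lfloor n/2\rfloor$:
\[
\sum_{s=1}^{n/2}\binom{n}{s}\, e^{-\delta s\ln n\ln\ln n/8}
\;\leq\;\sum_{s\geq 1}\exp\!\left(s\ln n\left(1-\tfrac{\delta\ln\ln n}{8}\right)\right).
\]
Once $n$ is large enough that $\delta\ln\ln n/8 \geq 2$, each term is at most $n^{-s}$ and the geometric tail is $O(1/n)=o(1)$, which proves the lemma.

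The heart of the argument is that the Chernoff factor contributes $e^{-\Omega(s\ln n\ln\ln n)}$ while the entropy factor $\binom{n}{s}$ contributes only $e^{O(s\ln n)}$; the $\ln\ln n$ gap in the exponents is exactly what makes the union bound summable, and this is precisely why the edge count $m=\delta n\ln n \ln\ln n$, with its extra $\ln\ln n$ over the connectivity threshold, appears in the statement. There is no real obstacle to overcome; the only minor technicality is stating the hypergeometric tail bound in the form we need, which one may either cite directly or deduce by a Poissonization/$G(n,p)$ comparison.
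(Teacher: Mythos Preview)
Your proof is correct and essentially identical to the paper's: both fix $X$, apply the hypergeometric lower-tail bound (the paper's Theorem~\ref{thm:hypergeometric}) to get failure probability at most $e^{-|X|m/(8n)}$, and then take a union bound over all $X$, noting that the $\ln\ln n$ factor in $m$ beats the $\binom{n}{s}\leq e^{s\ln n}$ entropy term. The only cosmetic difference is that the paper cites its stated hypergeometric tail estimate directly rather than discussing a $G(n,p)$ transfer.
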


\subsubsection{\CM builds a connected graph and achieves a large minimum degree}

We now have all the necessary tools to conclude the theorems about the 
min-degree $k$ game and the connectivity game.

\begin{proof}[Proof of Theorems~\ref{thm:RMCB-mindegree-k} and \ref{thm:RMCB-connectivity}] 
By Theorem~\ref{thm:CBWinDegree1} here we need to take care of the upper bounds only.

Let $\alpha < \frac{1}{32e}$ be arbitrary such that \eqref{eq:alpha} is satisfied.
Define $\delta =(1+\epsilon)\frac{\alpha}{2}$, so $at = m = \delta n \ln n \ln\ln n$.
We show that by round $t$ \RM's graph is connected and has minimum degree at least $k$ a.a.s.

Recall that by Corollary~\ref{corMinDegBad} and Lemma~\ref{lemDAcyclic} all 
bad vertices have degree at least $k$ in \RM's graph by round $t$ a.a.s.
Moreover, by Corollary~\ref{cor:PathsOnB} and Lemma~\ref{lemDAcyclic} 
every bad vertex is connected to some good vertex 
via a path in \RM's graph a.a.s. 

It is enough to show that \RM's graph induced by the 
set of good vertices is connected and has minimum degree at least $k$.
We will use Lemma~\ref{lemGoodBlocked}.

Let $X\subseteq [n]$ be an arbitrary subset of good vertices, of size $|X|\leq \frac{n}{2}$. 
By Lemma~\ref{lem:EdgesToComplement} there are at least $|X|\frac{at}{2n}$ edges in $\G{[at]}$
between $X$ and $\overline{X}$.  
At most $\frac{8e\alpha at |X|}{n}$ of these $\frac{at |X|}{2n}$ edges  are in $\Hgraph{[at]}$ by 
Lemma~\ref{lemBlockedLocalUpperbound}, and 
at most another $|X|\ln n$ of them are going to an $\alpha$-bad vertex (since there are at most $\ln n$ bad vertices).

The rest of these edges is in \RM's graph by  Lemma~\ref{lemGoodBlocked}. 
That means that at least $\left(\left(\frac{1}{2} - 8 e \alpha \right)\frac{at}{n} - \ln n\right)|X| = \Omega (|X|\ln n \ln \ln n )\geq k$ edges
of \RM's graph leave $X$ to its complement among the good vertices. 
In particular, each good vertex $v$ has degree at least $k$ in \RM's graph.
\end{proof}

\subsection{\RM builds a Hamilton cycle} \label{sec:RMHamiltonicity}

We now turn to the Hamiltonicity game. The plan is the following: We use Corollary \ref{cor:PathsOnB} to find paths covering the bad vertices. Then we connect them to one long path, using short paths on the good vertices. Finally, we show that the rest is Hamilton connected, which allows us to close the loop using all remaining vertices. To find the short paths and prove Hamilton connectivity, we turn away from the game for a while, and look at random graphs in general.

\subsubsection{Short Paths}

The following precise notion of expansion from~\cite{krivelevich2014robust} will be central to our proofs. 
Here $N(X)$ denotes the set of vertices which have a neighbour in $X$. 

\begin{defin}\label{def:halfExpander}
	Let $\lambda$ and $r$ be positive reals. A graph $G$ is a \textit{half-expander with parameters $\lambda$ and $r$} if the following properties hold:
	\begin{enumerate}
		\item For every set $X$ of vertices of size $\left|X\right|\leq\frac{\lambda n}{r}$, $\left|N(X)\right|\geq r\left|X\right|$,
		\item for every set $X$ of vertices of size $\left|X\right|\geq\frac{n}{\lambda r}$,  $\left|N(X)\right|\geq \left(\frac{1}{2}-\lambda\right)n$, and
		\item for every pair of disjoint sets $X$,$Y$ such that $\left|X\right|,\left|Y\right|\geq\left(\frac{1}{2}-\lambda^{1/5}\right)n$, $e(X,Y)>2n$.
	\end{enumerate}
\end{defin}

The following tail estimates for the {\emph hypergeometric distribution} will be very convenient. 
Let $F$, $f$ and $l$ be positive integers such that $f,l\leq F$. The value of the random variable $X$ is the size of the intersection of fixed $f$-element subset 
$M \subseteq [F]$ with a uniformly chosen $l$-subset $M^*$. Note that the expected value of $X$ is $\frac{fl}{F}$. 
For the following standard estimates see e.g. \cite{janson2011random} Theorem 2.10.

\begin{thm}\label{thm:hypergeometric}
Let $X$ have the hypergeometric distribution with parameters $F$, $f$ and $l$. Then
\begin{align}
\prob{X\geq 2\frac{fl}{F}}&\leq e^{-\frac{fl}{3F}}\\
\prob{X\leq \frac{fl}{2F}}&\leq e^{-\frac{fl}{8F}}.
\end{align}
\end{thm}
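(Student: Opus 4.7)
The plan is to reduce this hypergeometric tail bound to a standard Chernoff-type estimate for a sum of indicator variables. First I would write $X=\sum_{j\in M} Y_j$, where $Y_j$ is the indicator of the event $j\in M^*$. Since $M^*$ is a uniformly random $l$-subset of $[F]$, each $Y_j$ has expectation $l/F$, so the mean of $X$ is $\mu:=fl/F$. The indicators $\{Y_j\}_{j\in M}$ are not independent, but they are \emph{negatively associated}: this is a classical property of indicators arising from sampling without replacement, and it is precisely what is needed for the moment-generating-function version of Chernoff's argument to go through unchanged.

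Under negative association one has $\mathrm{E}[e^{sX}]\leq \prod_{j\in M}\mathrm{E}[e^{sY_j}]$ for every $s\geq 0$, and the right-hand side coincides with the MGF of a $\mathrm{Bin}(f, l/F)$ random variable of the same mean. Optimising $s$ in the usual way yields the multiplicative Chernoff bounds
\[
\prob{X\geq(1+\delta)\mu}\leq e^{-\delta^2\mu/3}\quad\text{for }\delta\in[0,1],\qquad \prob{X\leq(1-\delta)\mu}\leq e^{-\delta^2\mu/2}.
\]
Specialising the first inequality to $\delta=1$ gives the claimed $\prob{X\geq 2fl/F}\leq e^{-fl/(3F)}$, and the second with $\delta=1/2$ gives $\prob{X\leq fl/(2F)}\leq e^{-fl/(8F)}$.

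An equivalent route is to invoke Hoeffding's classical comparison theorem: $\mathrm{E}[\varphi(X)]\leq \mathrm{E}[\varphi(X')]$ for every convex $\varphi$, where $X'\sim \mathrm{Bin}(l, f/F)$. Applying this to $\varphi(x)=e^{sx}$ reduces the estimate directly to a textbook Chernoff bound for a binomial with identical mean. The only conceptually nontrivial step is thus verifying negative association (or, equivalently, the Hoeffding reduction); once this is in hand the numerical constants $1/3$ and $1/8$ pop out of routine manipulation of the multiplicative Chernoff inequality, which is why the statement is simply quoted from~\cite{janson2011random} rather than proved in situ.
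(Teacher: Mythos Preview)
Your sketch is correct: the reduction via negative association (or, equivalently, Hoeffding's convexity comparison between sampling with and without replacement) is exactly the standard way to transfer the multiplicative Chernoff bounds to the hypergeometric distribution, and your choices $\delta=1$ and $\delta=1/2$ recover the constants $1/3$ and $1/8$. Note, however, that the paper does not give a proof of this statement at all; it simply cites \cite{janson2011random}, Theorem~2.10, as you yourself observe in your last sentence. So there is nothing to compare against beyond confirming that your argument is the textbook one underlying that reference.
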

We will use the theorem to estimate how many edges of a "good" edge set of size $f$ are realized in $G(n,m)$. 

The following useful properties of the random graph are consequences of Theorem~\ref{thm:hypergeometric}; a proof is included in the Appendix.
\begin{lem}\label{lem:smallSetsHaveLargeNeighborhood}
For all constant $\delta>0$ let $m=\delta n\ln{n}\ln{\ln{n}}$ and $G=G(n,m)$, then the following the following three properties hold with probability 
at least $1-e^{-\Omega(\ln n\ln \ln n)}$.
\begin{itemize}
\item[(a)] Every vertex set $X$ of size at most $\size{X}\leq \frac{n^2}{m}$ has a neighborhood of size at least $\size{N_G(X)}\geq \size{X}\frac{m}{8n}$.
\item[(b)] for every pair of vertex sets $X\subset [n]$ of size $\frac{n}{4}\geq |X| \geq \frac{64n^2}{m}$, 
and $N\subset [n]$ of size $\size{N}\leq \frac{n}{2}$ there are at least $\size{X}\frac{m}{8n}$ edges between $X$ and $[n]\setminus (X\cup N)$ in $G(n,m)$.
\item[(c)] for every pair of disjoint vertex sets $X,Y\subset [n]$ of size at least $\size{X},\size{Y}\geq \frac{n}{4}$ there are at least $m\size{X}/8 n$ edges between $X$ and $Y$ in $G(n,m)$.
\end{itemize}
\end{lem}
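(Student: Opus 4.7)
My plan is to prove each of the three estimates via standard concentration, working in the binomial model $G(n,p)$ with $p=m/\binom{n}{2}$ and transferring back to $G(n,m)$ through the classical inequality $\Pr[G(n,m)\in\mathcal{Q}]\leq O(\sqrt{m})\,\Pr[G(n,p)\in\mathcal{Q}]$ for monotone $\mathcal{Q}$. All three target events are monotone decreasing in the edge set (more edges can only enlarge $N(X)$ or a cut), and the $\sqrt{m}$ loss from the transfer is negligible compared to the super-polynomially small failure probabilities produced below. Alternatively one could argue in $G(n,m)$ directly using negative association of the relevant indicators, or invoke Theorem~\ref{thm:hypergeometric} on $e(X,Y)$ when that quantity is itself hypergeometric.

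For (a) I would fix $X$ of size $s\leq n^{2}/m$. In $G(n,p)$ the events $\{v\in N(X)\}$ for $v\notin X$ are independent with probability $1-(1-p)^{s}$; since $sp=O(1)$ in this range, $1-(1-p)^{s}\geq csp$ for an absolute constant $c>0$, whence $\mathbb{E}[|N(X)|]=\Omega(sm/n)$. A Chernoff bound then gives $|N(X)|\geq sm/(8n)$ except with probability $\exp(-\Omega(sm/n))=\exp(-\Omega(s\ln n\ln\ln n))$. Union-bounding over $\binom{n}{s}\leq n^{s}$ choices of $X$, and then over $s$, yields total failure probability $\exp(-\Omega(\ln n\ln\ln n))$; the decisive point is that the Chernoff exponent $sm/n\sim s\delta\ln n\ln\ln n$ dominates the union-bound factor $s\ln n$.

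For (b), fix $X$ with $64n^{2}/m\leq|X|\leq n/4$ and $N\subseteq[n]$ with $|N|\leq n/2$, and set $T=[n]\setminus(X\cup N)$, so $|T|\geq n/4$. In $G(n,p)$ the count $e(X,T)$ is a sum of $|X|\cdot|T|$ independent Bernoullis with mean at least $|X|np/4=|X|m/(2n)$, so Chernoff yields $e(X,T)\geq|X|m/(8n)$ except with probability $\exp(-\Omega(|X|m/n))\leq\exp(-\Omega(n))$ (using $|X|\geq 64n^{2}/m$). A union bound over $\binom{n}{|X|}\binom{n}{|N|}\leq 4^{n}$ is therefore harmless. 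For (c) the argument is simpler: in $G(n,p)$, $e(X,Y)$ is a sum of $|X||Y|\geq n^{2}/16$ independent Bernoullis with mean $\geq m/8$, and Chernoff gives $e(X,Y)\geq m/16\geq m|X|/(8n)$ with failure probability $\exp(-\Omega(m))$, which again beats $4^{n}$ since $m=\omega(n)$.

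The principal delicate point is the small-$s$ regime of (a): there the Chernoff exponent is only of order $\ln n\ln\ln n$, so one must exploit the strict gap between $m/n=\delta\ln n\ln\ln n$ and $\ln n$ to absorb the $\binom{n}{s}$ factor in the union bound. Parts (b) and (c) are much easier since $|X|$ is already linear in $n$, making the Chernoff exponents $\Omega(n)$ and $\Omega(m)$ respectively, which swamp the $\exp(O(n))$ union bounds without any bookkeeping.
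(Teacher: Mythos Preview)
Your approach is correct and in parts (b) and (c) essentially coincides with the paper's: both of you bound the cut $e(X,T)$ (resp.\ $e(X,Y)$) by concentration---you via Chernoff in $G(n,p)$, the paper via Theorem~\ref{thm:hypergeometric} directly in $G(n,m)$---and absorb the $4^{n}$ union bound into the $e^{-\Omega(n)}$ (resp.\ $e^{-\Omega(m)}$) tail. One small slip in (c): the inequality $m/16\ge m|X|/(8n)$ only holds when $|X|\le n/2$, which need not be the case (e.g.\ $|X|=3n/4$, $|Y|=n/4$). The fix is immediate: use $\mathbb{E}[e(X,Y)]\ge |X|(n/4)p\ge |X|m/(2n)$ and ask Chernoff for a quarter of the mean, giving failure probability $e^{-\Omega(|X|m/n)}\le e^{-\Omega(m)}$.

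Part (a) is where the two arguments genuinely diverge. You exploit that in $G(n,p)$ the events $\{v\in N(X)\}$ for $v\notin X$ are independent, apply Chernoff to their sum, and then transfer to $G(n,m)$ via Pittel's inequality. The paper instead works in $G(n,m)$ throughout, where these indicators are \emph{not} independent; to get around this it runs a sequential exposure: it reveals the neighbourhoods of the vertices $s_{1},\dots,s_{|X|}$ of $X$ one by one, and for each $s_{i}$ uses Theorem~\ref{thm:hypergeometric} to guarantee (conditionally) at least $m/(4n)$ fresh neighbours outside the previously found ones, calling $s_{i}$ a ``success'' in that case. Half the $s_{i}$ succeeding yields $|X|/2$ disjoint neighbour-sets of size $m/(4n)$, hence $|N(X)|\ge |X|m/(8n)$. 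Your route is shorter and more transparent, at the cost of the extra (standard) transfer lemma; the paper's route is self-contained within the hypergeometric framework already set up but requires the more delicate edge-revelation bookkeeping.
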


First we show that random graphs are half-expanders, with some resilience to edge and vertex removal. 
This will be useful in particular with respect to Lemma \ref{lemBlockedLocalUpperbound}. 
We state the lemma in a bit more general form than is need in this section in order to provide us with some leeway later.

\begin{lem}\label{lem:halfExpander_GM_D}
For all $0<\lambda<2^{-11}$ and all $\delta>0$, the following holds: Let $m=\delta n\ln{n}\ln{\ln{n}}$ and $G=G(n,m)$. Further let $\mathcal{D} \subseteq {{[n]}\choose {\leq \ln^2 n}}$ 
be a family of $n^{3\ln{n}}$ vertex subsets such that each set $D\in\mathcal{D}$ has size at most $\size{D}\leq \ln^2{n}$. Then with probability at least $1-e^{-\Omega(\ln n \ln{\ln{n}})}$, for all $D\in\mathcal{D}$ and all graphs $H\subset K_n$ with maximum degree at most $\Delta(H)\leq \frac{m}{32n}$, the graph $G-E(H)-D$ is a half-expander with parameters $\lambda$ and 
$r=\frac{m}{16n} = \frac{\delta}{16} \ln n\ln\ln n$.
\end{lem}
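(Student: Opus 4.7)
The plan is to identify a single high-probability event for $G=G(n,m)$ that does not depend on $D$ or $H$, and derive the half-expander property from it deterministically for every admissible pair $(D,H)$. Two events will be intersected: the event of Lemma~\ref{lem:smallSetsHaveLargeNeighborhood} (holding with probability $1-e^{-\Omega(\ln n\ln\ln n)}$) and an auxiliary small-set event $\mathcal{F}$ defined below.

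Event $\mathcal{F}$ asserts: for every $D\in\mathcal{D}$ and every $X\subseteq[n]$ with $1\le|X|\le K:=4\ln^2 n/r$, one has $|N_G(X)\cap D|\le r|X|/4$. For a fixed pair $(X,D)$, the number of $G$-edges from $X$ to $D$ is hypergeometric with mean $\mu=|X||D|m/\binom{n}{2}=O(|X|\ln^3 n\ln\ln n/n)$, which is far smaller than the target $t:=r|X|/4$ (their ratio is $O(|D|/n)=o(1)$). A standard Chernoff-type tail bound for the hypergeometric distribution (an extension of Theorem~\ref{thm:hypergeometric}, cf.~\cite{janson2011random}) then gives probability at most $(e\mu/t)^t=e^{-\Theta(|X|\ln^2 n\ln\ln n)}$. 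Union-bounding over the at most $n^K$ choices of $X$ and the $n^{3\ln n}$ choices of $D$ leaves a total failure probability of $e^{-\Omega(\ln^2 n\ln\ln n)}$, comfortably within the required tolerance. Conditioning on both events, condition~(1) of Definition~\ref{def:halfExpander} is verified as follows. Fix $X$ with $|X|\le\lambda n/r$; since $\lambda<1/16$ this is below $n^2/m$, so property~(a) yields $|N_G(X)|\ge 2r|X|$. Deleting $E(H)$ disconnects at most $|X|\Delta(H)\le r|X|/2$ of these neighbours, while deleting $D$ removes at most $r|X|/4$ further vertices (by $\mathcal{F}$ if $|X|<K$, and by the trivial bound $|D|\le\ln^2 n\le r|X|/4$ if $|X|\ge K$). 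Hence $|N_{G-E(H)-D}(X)|\ge\frac{5}{4}r|X|\ge r|X|$.

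Conditions~(2) and~(3) proceed by contradiction. For condition~(2) with $|X|\le n/4$, if $|N_{G-E(H)-D}(X)|<(1/2-\lambda)n$ then $N:=N_{G-E(H)-D}(X)\cup D$ has size below $n/2$; property~(b) forces $\ge 2r|X|$ $G$-edges from $X$ to $[n]\setminus(X\cup N)$, each of which must lie in $E(H)$ (its endpoint is neither in $D$ nor a $(G-E(H)-D)$-neighbour of $X$), contradicting $|X|\Delta(H)\le r|X|/2$. For $n/4<|X|\le(1/2-\lambda)n$, the same contradiction is derived by applying property~(b) or~(c) (depending on whether $|Y|<n/4$ or $|Y|\ge n/4$) to the non-neighbour set $Y:=[n]\setminus(X\cup N_{G-E(H)-D}(X)\cup D)$, which has size at least $\lambda n-\ln^2 n$; one again obtains $\Omega(m|Y|/n)$ $G$-edges between $X$ and $Y$ all forced into $E(H)$, contradicting the degree bound. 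The range $|X|>(1/2-\lambda)n$ is trivial under the closed-neighbourhood convention. For condition~(3), property~(c) applied to $X\setminus D$ and $Y\setminus D$ (each of size $\ge n/4$, since $\lambda^{1/5}<1/4$ follows from $\lambda<2^{-11}$) produces $\Omega(m)$ edges, from which the $|X|\Delta(H)=O(m)$ $H$-edges can be subtracted while leaving $\gg 2n$ edges.

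The main obstacle is condition~(1) for very small $X$, in particular singletons: there $|D|$ may be as large as $\ln^2 n$, substantially exceeding the typical degree $\Theta(\ln n\ln\ln n)$, so the naive ``subtract $|D|$'' argument available for larger $X$ breaks down. The remedy is the event $\mathcal{F}$, which exploits the fact that while the worst-case intersection $|N_G(X)\cap D|$ could in principle be all of $N_G(X)$, its \emph{mean} is only $o(1)$ because $|D|/n\to 0$; the hypergeometric concentration is then strong enough to survive the large union bound $|\mathcal{D}|=n^{3\ln n}$.
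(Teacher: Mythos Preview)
Your approach is essentially the same as the paper's: you combine Lemma~\ref{lem:smallSetsHaveLargeNeighborhood} with an auxiliary high-probability event controlling how many neighbours can fall into a set $D\in\mathcal{D}$, and then verify the three half-expander conditions deterministically. The only substantive difference is the form of the auxiliary event. The paper proves the cleaner per-vertex statement that every $v\in[n]$ has at most $\frac{m}{32n}$ $G$-neighbours in every $D\in\mathcal{D}$ (with probability $1-e^{-\Omega(\ln^2 n\,\ln\ln n)}$); this immediately bounds the number of edges from any $X$ into $D$ by $|X|\cdot\frac{m}{32n}$, so removing $D$ and $E(H)$ together costs at most $\frac{m}{16n}$ incident edges per vertex of $X$. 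Your event $\mathcal{F}$ achieves the same effect via a per-set bound on $|N_G(X)\cap D|$ for small $X$, plus the trivial bound $|D|\le\ln^2 n$ for larger $X$; this is correct but slightly less economical, since you must union-bound over all small $X$ as well.

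There is one point that needs fixing. For condition~(2) you split into cases according to the size of $X$. The case $|X|\le n/4$ is fine, but your treatment of $n/4<|X|\le(\tfrac12-\lambda)n$ with $|Y|<n/4$ does not go through as written: applying Lemma~\ref{lem:smallSetsHaveLargeNeighborhood}(b) to $Y$ requires a companion set $N$ of size at most $n/2$ with $[n]\setminus(Y\cup N)\subseteq X$, and here the complement of $X\cup Y$ can have size close to $\tfrac34 n$ when $\lambda<2^{-11}$. Fortunately this whole case split is unnecessary. Since $N(\cdot)$ is monotone under inclusion (the paper's convention is that $N(X)$ is the set of vertices having a neighbour in $X$), it suffices to verify condition~(2) at the minimal size $|X|=\frac{n-|D|}{\lambda r}=o(n)$, which falls squarely into your first case. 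This is exactly what the paper does.
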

\begin{proof}

We first show the following.\\[2mm]
\textbf{Claim.}
{\em With probability at least $1-e^{-\Omega(\ln^2 n\ln \ln n)}$, for all $D\in\mathcal{D}$ and all $v\in [n]$, $v$ has at most $\frac{m}{32n}$ $G$-neighbors in $D$.}
\begin{proof} 
For a fixed vertex $v$, set $D\in {\cal D}$ and subset $Q\subseteq D$ of size $\size{Q}=q = \frac{m}{32n}$, the probability that all vertices in $Q$ are $G$-neighbors of 
$v$ is $\frac{{N-q \choose m-q}}{{N\choose m}} = \prod_{i=0}^{q-1} \frac{m-i}{N-i} \leq \left( \frac{m}{N}\right)^q$, where $N={n\choose 2}$.\\
Taking the union bound over all $v, D$, and $Q$, yields that the failure probability of the event in the claim is at most
$$n|{\cal D}|{\size{D}\choose q}\left( \frac{m}{N}\right)^q\leq n\cdot n^{3\ln n} \left(\frac{200\ln^2 n}{n} \right)^q = n^{-\Omega\left(\ln{n}\ln{\ln{n}}\right)}$$
and the claim is proved.
\end{proof}
Since  the events in the claim and Lemma \ref{lem:smallSetsHaveLargeNeighborhood} hold with probability at least $1-e^{-\Omega(\ln n \ln \ln n)}$,
it is enough to show that they imply the event in our lemma.
 
Let $D\in {\cal D}$ be an arbitrary set from the family ${\cal D}$ and let $H$ be an arbitrary graph with maximum degree $\Delta (H) \leq \frac{m}{32n}$.
First note that by the property of the claim, removing $D$ and $E(H)$ from $G$ removes at most $\frac{m}{16n}$ incident edges at any vertex in $V(G)\setminus D$.

To show the first property of Definition \ref{def:halfExpander}, fix $X\subset V(G)\setminus D$ such that $\size{X}\leq \frac{\lambda (n-\size{D})}{r}$.
Note that then $\size{X}\leq \frac{n^2}{m}$, so by Lemma \ref{lem:smallSetsHaveLargeNeighborhood}(a) the neighborhood of $X$ in $G$ 
has size at least $\size{N_G(X)}\geq \size{X}\frac{m}{8n}$.
Removing $D$ and $E(H)$ eliminates at most $\frac{\size{X}m}{16 n}$ edges incident to $X$, which means that after the removal, 
the neighborhood of $X$ has  size at least  $\frac{\size{X}m}{16 n}= r\size{X}$.

For the second property, let us fix a set $X$ of size $\size{X}=\frac{n-\size{D}}{\lambda r}$.
Note that $\frac{64 n^2}{m}\leq\size{X} \leq \frac{n}{4}$. Assume that the neighborhood $N$ of $X$ in $G-E(H)-D$ has size less than 
$(\frac{1}{2} - \lambda)(n - |D|)$. Then by the property in Lemma~\ref{lem:smallSetsHaveLargeNeighborhood}(b),
there are at least $\size{X}\frac{m}{8n}$ edges between $X$ and $[n]\setminus{\left(X\cup N\right)}$ in $G$. 
Removing $D$ and $E(H)$ removes at most $\size{X}\frac{m}{16n}$ edges. Thus, there is an edge from 
$X$ to outside of $N$ in $G-E(H)-D$, a contradiction to the definition of $N$.

For the third property, fix two disjoint vertex sets $X,Y\subset [n]\setminus D$ of size at least $\left(\frac{1}{2}-\lambda^{1/5}\right)(n-\size{D})$. Note that 
$|X|, |Y|  \geq \frac{n}{4}$ for $n$ large enough, so we can apply Lemma~\ref{lem:smallSetsHaveLargeNeighborhood}(c) and conclude that 
there are at least $\frac{m\size{X}}{8n}$ edges between $X$ and $Y$ in $G$. Therefore, at least $\left(\frac{1}{8} - \frac{1}{16}\right)\size{X}\frac{m}{n}\geq 2n$ edges remain after removing $D$ and $E(H)$, for $n$ sufficiently large.
\end{proof}

Since we now know we are working with a half-expander, we can do the first step towards Hamiltonicity by connecting vertices with short paths. 

\begin{thm}\label{thmShortPaths}
There is a $\lambda_0>0$ such that for all $\lambda<\lambda_0$, the following holds: Let $G$ be a half-expander on $n$ vertices with parameters $\lambda$ and 
$r \geq \frac{8}{\lambda^2}\ln n$, and let $k\leq \ln{n}$. 
Then for all pairwise distinct points $a_1,\dots,a_k,b_1,\dots,b_k$, there are vertex disjoint paths $P_1,\dots,P_k$, 
each of length at most $\ln{n}$, such that $P_i$ connects $a_i$ to $b_i$.
\end{thm}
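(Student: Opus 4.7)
The plan is to build the paths $P_1, P_2, \dots, P_k$ sequentially. Suppose $P_1, \dots, P_{i-1}$ are already constructed and let $U_i$ be the union of their interior vertices; since each has at most $\ln n - 1$ interior vertices, $|U_i| \le (i-1)(\ln n - 1) \le \ln^2 n$. To produce $P_i$ I run BFS in $G' := G - U_i$ from both $a_i$ and $b_i$ simultaneously, writing $A_j$, $B_j$ for the balls of radius $j$ in $G'$. A short $a_i$–$b_i$ path will emerge as soon as the two BFS trees meet, either by sharing a vertex or by being joined by an edge.

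The growth proceeds in three phases. In the \emph{exponential phase}, while $|A_j| \le \lambda n / r$, property~1 of the half-expander gives $|N_G(A_j)| \ge r|A_j|$, and dropping the $U_i$-vertices yields $|A_{j+1}| \ge r|A_j| - |U_i|$. Since $|U_i|$ is much smaller than $r$ (this is where the hypothesis $r \ge 8\ln n/\lambda^2$ is used), once $|A_j|$ exceeds $2|U_i|/r$ we obtain effective growth of at least a factor $r/2$ per level, so after $j_1 = O(\log n / \log r) = O(\log n / \log\log n)$ levels we reach $|A_{j_1}| \ge \lambda n/r$. In the \emph{majority phase}, one further application of property~1 yields $|A_{j_1+1}| \ge \lambda n \ge n/(\lambda r)$, and then property~2 gives $|N_G(A_{j_1+1})| \ge (1/2 - \lambda)n$, so $|A_{j_1+2}| \ge (1/2 - \lambda^{1/5})n$ once $U_i$ is removed (taking $\lambda$ below some absolute $\lambda_0$ and $n$ large). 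The analogous bound holds for $B$ by symmetry.

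In the \emph{meeting phase}, set $j^\ast := j_1 + 2$. If $A_{j^\ast} \cap B_{j^\ast} \ne \emptyset$ I concatenate the two BFS branches through a common vertex. Otherwise $A_{j^\ast}$ and $B_{j^\ast}$ are disjoint, each of size at least $(1/2 - \lambda^{1/5})n$, so property~3 guarantees more than $2n$ edges between them in $G$; picking any one, both endpoints lie in $V \setminus U_i$, so the edge survives into $G'$ and bridges the two BFS trees. Either way the resulting $a_i$-to-$b_i$ path has length at most $2 j^\ast + 1 = O(\log n / \log\log n) \le \ln n$ for large $n$, and so the induction on $i$ continues.

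The main obstacle is the exponential phase: a priori, subtracting up to $|U_i| \le \ln^2 n$ vertices per BFS step could annihilate the expansion $r|A_j|$ when $|A_j|$ is still tiny. The quantitative hypothesis $r \ge 8\ln n / \lambda^2$ is exactly what ensures that the startup BFS steps do not stall and that $|A_j|$ escapes the low-size regime where the loss of $U_i$ is comparable to the gain. Once past this initial regime, properties~1,~2,~3 hand off to one another at the appropriate sizes and the rest of the argument is routine.
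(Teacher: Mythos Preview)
Your sequential approach has a genuine gap in the exponential phase. You assert that ``$|U_i|$ is much smaller than $r$'', but this is false under the stated hypothesis. Since each previous path has at most $\ln n-1$ interior vertices and $i$ ranges up to $k=\ln n$, you only get $|U_i|\le(\ln n-1)^2$, while the theorem assumes merely $r\ge\frac{8}{\lambda^2}\ln n=\Theta(\ln n)$. For large $n$ one can perfectly well have $r<|U_i|$, and then your recursion $|A_{j+1}|\ge r|A_j|-|U_i|$ yields nothing when $|A_j|$ is small. Concretely, nothing in the half-expander axioms prevents all $\ge r$ neighbours of $a_i$ from lying inside the $\le\ln^2 n$ vertices already consumed by $P_1,\dots,P_{i-1}$, in which case $a_i$ is isolated in $G'=G-U_i$ and no path $P_i$ exists. (Even granting your sharper length estimate $O(\ln n/\ln\ln n)$ per path, one still gets $|U_i|=O(\ln^2 n/\ln\ln n)\gg r$, so the problem does not go away.) This is precisely the obstacle you flag in your last paragraph, but the hypothesis $r\ge 8\lambda^{-2}\ln n$ does not cure it.

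The paper avoids this by growing the fans for all $k$ pairs \emph{simultaneously} rather than sequentially, and by throttling the growth. At level $j$ it maintains, for every $i\le k$, sets $D^{\pm}_{i,j}$ of size exactly $q^j$ with $q:=r/(8\ln n)\ge\lambda^{-2}$, all $2k$ sets at each level kept pairwise disjoint and disjoint from earlier levels. The expansion of a single $D^{\pm}_{i,j-1}$ produces $r\cdot q^{j-1}=8(\ln n)q^j$ vertices, while the total number of vertices reserved so far over \emph{all} pairs and \emph{all} levels $\le j$ is at most about $4(\ln n)q^j$; hence each fan can be advanced by a factor $q$ without ever stalling, even at the very first step. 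After $O(\ln n)$ levels the fans reach size $n/(\lambda r)$, properties~2 and~3 take over, and one reads off the $k$ vertex-disjoint paths by backtracking through the disjoint $D^{\pm}_{i,j}$. The simultaneous bookkeeping together with the throttled growth rate $q$ (rather than $r$) is exactly what makes the argument succeed with $r$ merely of order $\ln n$.

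A minor secondary issue: your $P_i$ must also avoid the endpoints $a_{i'},b_{i'}$ for $i'\ne i$, not just the interiors of earlier paths; this is easily handled once the main problem is fixed, but as written your BFS may route $P_i$ through a future terminal.
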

\begin{proof}
We build the paths simultaneously, starting at both ends and keeping sets of possible vertices at the different positions in the paths, from which we then can choose to connect the two partial paths we built. Throughout the proof, let $q:=\frac{r}{8\ln{n}} \geq \frac{1}{\lambda^2}$. Note that $q\geq 2$ for $\lambda_0$ sufficiently small.\\
Let $j_0=\left\lceil\frac{\ln{\frac{\lambda n}{r}}}{\ln{q}}\right\rceil$. For $0\leq j\leq j_0+1$ and $1\leq i\leq k$ we will define 
vertex sets $D^+_{i,j}$ and $D^-_{i,j}$, such that $D^+_{i,0}=\{a_i\}$ and $D^-_{i,0}=\{b_i\}$, all the $2(j_0+2)k$ sets $D^+_{i,j}$ and $D^-_{i,j}$ are
pairwise disjoint, for every $i,j$ we have $D^+_{i,j}\subseteq N\left(D^+_{i,j-1}\right)$ and $D^-_{i,j}\subseteq N\left(D^-_{i,j-1}\right)$, and $\size{D^-_{i,j}}= \size{D^+_{i,j}}=f(j)$ where
\begin{align*}
f(j)= \begin{cases} q^j&\text{if }j<j_0\\
\frac{\lambda n}{r}&\text{if }j=j_0\\
\frac{n}{\lambda r}&\text{if }j=j_0+1.\end{cases}
\end{align*}
We define the sets iteratively over $j$, where in each step, we iterate over $i$.\\
First, let $D^+_{i,0}:=\{a_i\}$ and $D^-_{i,0}:=\{b_i\}$ for $i=1,\dots,k$.\\
Now let us fix $1\leq j\leq j_0+1$ and $1\leq i\leq k$, and assume that for all $j'<j$ and all $1\leq i'\leq i$, the sets $D^+_{i',j'}$ and $D^-_{i',j'}$ are constructed, and for all $i''<i$, the sets $D^+_{i'',j}$  and $D^-_{i'',j}$ are constructed.\\
We first define
\begin{align*}
A^\pm_{i,j}:=N\left(D^\pm_{i,j-1}\right)\setminus\left(\left(\bigcup_{i''<i}D^+_{i'',j}\cup D^-_{i'',j}\right)\cup\left(\bigcup_{1\leq i'\leq k,j'<j}D^+_{i',j'}\cup D^-_{i',j'}\right)\right).
\end{align*}
We show that we can find the $D^\pm_{i,j} \subseteq A^\pm_{i,j}$ with the required properties by proving
\begin{align*}
\size{A^\pm_{i,j}}\geq f(j).
\end{align*}

Let us first consider the case $j\leq j_0$. Then for all $j'<j$ and $1\leq i'\leq k$, we have $\size{D^\pm_{i',j'}}=f(j')=q^{j'}$. Further, since $G$ is a half-expander and $\size{D^\pm_{i,j-1}}\leq \frac{\lambda n}{r}$, we have $\size{N\left(D^\pm_{i,j-1}\right)}\geq r\size{D^\pm_{i,j-1}}= (8\ln n)q^j$. Finally note that $\size{D^\pm_{i'',j}}\leq q^j$ for all $i''<i$ (this holds also if $j=j_0$, since $f(j_0)=\frac{\lambda n}{r}\leq q^{j_0}$). Therefore,
\begin{align*}
\size{A^\pm_{i,j}}&\geq \size{N(D^\pm_{i,j-1})}- \sum_{i''=1}^{i-1}\size{D^+_{i'',j} \cup D^-_{i'',j}}-\sum_{j'=0}^{j-1}\sum_{i'=1}^{k}\size{D^+_{i',j'} \cup D^-_{i',j'}}\\
& \geq (8\ln n)q^j - (\ln{n}-1)(2q^j) - \ln n \sum_{j'=0}^{j-1} 2q^{j'}\\
& \geq (4\ln n +2)q^j \geq q^j \geq f(j)
\end{align*}
where we used that $i\leq k\leq \ln{n}$, and $ \sum_{j'=0}^{j-1} q^{j'} \leq q^j$ as $q\geq 2$. 

In the case $j=j_0+1$, note that
\begin{align} \label{eq:j_0}
\size{\bigcup_{1\leq i'\leq k}D^+_{i',j_0}\cup D^-_{i',j_0}}=2k\frac{\lambda n}{r}
\end{align}
and 
\begin{align}\label{eq:j}
\size{\bigcup_{1\leq i'\leq k, j'<j_0}D^+_{i',j'}\cup D^-_{i',j'}}= 2k\sum_{j'=0}^{j_0-1}q^{j'}\leq  4kq^{j_0-1}\leq 4k\frac{\lambda n}{r},
\end{align}
by the definition of $j_0$. Again using the half-expander property of $G$, we have that
\begin{align*}
\size{A^\pm_{i,j_0+1}}&\geq r\frac{\lambda n}{r} -2(i-1)\frac{n}{\lambda r}- \left(2k\frac{\lambda n}{r}+4k\frac{\lambda n}{r}\right)\\
&\geq\left(\lambda^2 r-2(\ln n-1)-6\lambda^2 \ln n\right)\frac{n}{ \lambda r}\\
&\geq\frac{n}{\lambda r}
\end{align*}
using that $r=8q\ln{n}$, $q\geq\frac{1}{\lambda^2}$ and $i\leq k\leq \ln{n}$.
This concludes the proof that we can construct the sets $D^\pm_{i,j}$ with the properties described above. We now find paths for all $i$, using the $D^\pm_{i,j}$.\\
Suppose we have constructed appropriate paths $P_1, \ldots , P_{i-1}$ already.
To construct $P_1$, let us first define
\begin{align*}
D^\pm_{i,j_0+2}=N\left(D^\pm_{i,j_0+1}\right)\setminus \left(\bigcup_{\substack{1\leq i'\leq k\\0\leq j\leq j_0+1}}\left( D^+_{i',j} \cup D^-_{i',j}\right) \cup \bigcup_{i'' =1}^{i-1} V(P_{i''})\right).
\end{align*}
Since $|D^\pm_{i,j_0+1}| = \frac{n}{\lambda r}$ we can use the second half-expander property for $G$. This, together with the 
estimates \eqref{eq:j_0}, \eqref{eq:j}, and $\frac{n}{r} \leq \frac{\lambda^2 n}{8\ln n}$ implies 
$$\size{D^\pm_{i,j_0+2}} \geq \left( \frac{1}{2} -\lambda \right) n - \ln n \left( 2\frac{n}{\lambda r} + 2\frac{\lambda n}{r} + 4 \frac{\lambda n}{r} + \ln n\right)  \geq\left(\frac{1}{2}-\lambda^{1/5}\right)n,$$ for $\lambda$ small enough. 
If the sets $D^+_{i,j_0+2}$ and $D^-_{i,j_0+2}$ are disjoint, then using the third half-expander property we can conclude that 
there is an edge $e$ between them. Retracing a path from each endpoint of $e$ through the $D^\pm_{i,j}$ back to $D^+_{i,0}=\{a_i\}$ and $D^-_{i,0}=\{b_i\}$, respectively,
and concatenating them with $e$ gives us the required $a_i, b_i$-path $P_i$. The length of $P_i$ then is $j_0+3\leq\ln{n}$, indeed. If $D^+_{i,j_0+2}$ and $D^-_{i,j_0+2}$ are not disjoint, we can trace back a path to $a_i$ and $b_i$
from any vertex in the intersection and then $P_i$ is of length $j_0 +2$.
\end{proof}

The next corollary is a direct consequence of Lemma \ref{lem:halfExpander_GM_D} and Theorem \ref{thmShortPaths}.
\begin{cor}\label{cor:GnmShortPaths}
For all $\delta>0$ the following holds 
in the random graph $G(n, m)$ with $m=\delta n\ln{n}\ln{\ln{n}}$ a.a.s.
For all vertex sets $B$ with $\size{B}\leq \ln{n}$, all sequences of
pairwise distinct points $a_1,\dots,a_k,b_1,\dots,b_k\in V\setminus B$, $k\leq \ln n$, and all graphs $H$ with $\Delta(H)\leq \frac{m}{32n}$, there are vertex disjoint 
paths $P_1,\dots,P_{k-1}$ in $G(n,m) -E(H)- B -\{ a_1, b_k\}$, each of length at most $\ln{n}$, 
such that $P_i$ connects $a_{i+1}$ to $b_i$.
\end{cor}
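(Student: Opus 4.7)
The plan is to derive this corollary essentially mechanically from Lemma~\ref{lem:halfExpander_GM_D} (half-expansion of $G(n,m)-E(H)-D$) together with Theorem~\ref{thmShortPaths} (short paths in half-expanders). First I would fix $\lambda>0$ small enough to satisfy both $\lambda<2^{-11}$ (required by Lemma~\ref{lem:halfExpander_GM_D}) and $\lambda<\lambda_0$ (required by Theorem~\ref{thmShortPaths}); such a $\lambda$ clearly exists.

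Next I would set up the family $\mathcal{D}$ so that Lemma~\ref{lem:halfExpander_GM_D} applied once covers every choice of $B$, $a_1$, $b_k$ allowed in the statement. Concretely, let
\[
\mathcal{D}:=\left\{B\cup\{a,b\}:B\subseteq[n],\ |B|\leq\ln n,\ a,b\in[n]\right\}.
\]
Then $|\mathcal{D}|\leq n^{2}\sum_{j=0}^{\ln n}\binom{n}{j}\leq n^{\ln n+3}\leq n^{3\ln n}$ and every $D\in\mathcal{D}$ has $|D|\leq\ln n+2\leq \ln^2 n$, so the hypotheses of Lemma~\ref{lem:halfExpander_GM_D} are met. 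Consequently, a.a.s.\ for every $D\in\mathcal{D}$ and every graph $H$ on $[n]$ with $\Delta(H)\leq m/(32n)$, the graph $G(n,m)-E(H)-D$ is a half-expander with parameters $\lambda$ and $r=m/(16n)=(\delta/16)\ln n\ln\ln n$. I would work on this a.a.s.\ event from now on.

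Finally, given any admissible $B$, endpoints $a_1,\dots,a_k,b_1,\dots,b_k$, and $H$, I would set $D:=B\cup\{a_1,b_k\}\in\mathcal{D}$ and apply Theorem~\ref{thmShortPaths} to the half-expander $G':=G(n,m)-E(H)-D$ with the $2(k-1)$ pairwise distinct endpoints $a_2,\dots,a_k,b_1,\dots,b_{k-1}$. These are pairwise distinct by the hypothesis of the corollary, and $k-1\leq\ln n$. Furthermore $r=(\delta/16)\ln n\ln\ln n\geq(8/\lambda^2)\ln n$ for $n$ large enough, so the quantitative hypothesis of Theorem~\ref{thmShortPaths} is satisfied. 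The theorem then provides vertex disjoint paths $P_1,\dots,P_{k-1}$ in $G'$, each of length at most $\ln n$, with $P_i$ joining $a_{i+1}$ to $b_i$. Since $G'\subseteq G(n,m)-E(H)-B-\{a_1,b_k\}$, these are exactly the paths required.

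There is essentially no hard step; the two preceding results were tailored to be combined in this way. The only care I would take is to ensure the family $\mathcal{D}$ is \emph{fixed in advance} (depending only on $n$), so that the a.a.s.\ event from Lemma~\ref{lem:halfExpander_GM_D} applies uniformly to all admissible choices of $B,a_1,b_k,H$ in the final quantifier, and to verify the arithmetic relating $r$, $\lambda$, and the parameter thresholds of the two invoked results.
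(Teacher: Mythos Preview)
Your proposal is correct and follows essentially the same approach as the paper: choose $\lambda$ small enough for both Lemma~\ref{lem:halfExpander_GM_D} and Theorem~\ref{thmShortPaths}, take $\mathcal{D}$ to consist of all sets of the form $B\cup\{a_1,b_k\}$, invoke Lemma~\ref{lem:halfExpander_GM_D} once, and then apply Theorem~\ref{thmShortPaths} to $G(n,m)-E(H)-D$ with the endpoint pairs $(a_{i+1},b_i)$. Your write-up is in fact slightly more careful than the paper's in checking the size of $\mathcal{D}$, verifying $r\geq (8/\lambda^2)\ln n$, and stressing that $\mathcal{D}$ must be fixed before exposing $G(n,m)$.
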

\begin{proof}
Let $\lambda>0$ be small enough such that Lemma \ref{lem:halfExpander_GM_D} and Theorem \ref{thmShortPaths} both hold. Further let $\mathcal{D}$ be the family of all vertex sets $B \cup \{ a_1, b_k\}$ where $\size{B}\leq\ln n$. Note that $|{\cal D}| = {n\choose \ln n} \leq n^{\ln n}$.  
Then by Lemma \ref{lem:halfExpander_GM_D}, a.a.s. for every $B\in\mathcal{D}$ and every graph $H\subset K_n$ with $\Delta(H)\leq \frac{m}{32 n}$, the graph $G(n,m)-E(H)-B- \{ a_1, b_k\}$ is a half-expander with parameters $\lambda$ and $r=\frac{m}{16n}=\frac{\delta}{16}\ln n\ln \ln n\geq\frac{8}{\lambda^2}\ln n$. Applying Theorem \ref{thmShortPaths} to these graphs concludes the proof.
\end{proof}

\subsubsection{Hamilton Connectivity}

We now turn towards Hamilton connectivity. This section relies heavily on the works of Lee and Sudakov~\cite{LeeSudakov2011} and Krivelevich, Lee, and 
Sudakov~\cite{krivelevich2014robust}. 
The following properties prove to be a valuable criterion for Hamiltonicity.

\begin{defin}\label{defRE}
Let $\xi$ be a positive constant. We say that a graph $G$ has property $\mathcal{RE}\left(\xi\right)$ if it is connected, and for every path $P$ with a fixed edge $e$, (i) there exists a path containing $e$ longer than $P$ in the graph $G\cup P$, or (ii) there exists a set of vertices $S_P$ of size $\left|S_P\right|\geq\xi n$ such that for every vertex $v\in S_P$, there exists a set $T_v$ of size $\left|T_v\right|\geq\xi n$ such that for every $w\in T_v$, there exists a path containing $e$ of the same length as $P$ that starts at $v$, and ends at $w$.
\end{defin}

\begin{defin}\label{defComplementsRE}
Let $\xi$ be a positive constant and let $G_1$ be a graph with property $\mathcal{RE}\left(\xi\right)$. We say that a graph $G_2$ \textit{complements} $G_1$, if for every path $P$ with a fixed edge $e$, (i) there exists a path containing $e$ longer than $P$ in the graph $G_1\cup P$, or (ii) there exist $v\in S_P$ and $w\in T_v$, such that $\{v,w\}$ is an edge of $G_1\cup G_2\cup P$ (the sets $S_P$ and $T_v$ are as defined in Definition\ref{defRE}).
\end{defin}

\begin{prop}[\text{\cite[Proposition 3.3]{krivelevich2014robust}}] \label{propHamConnected}
Let $\xi$ be a positive constant. If $G_1\in\mathcal{RE}\left(\xi\right)$ and $G_2$ complements $G_1$, then $G_1\cup G_2$ is Hamilton connected.
\end{prop}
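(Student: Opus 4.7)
The plan is a rotation-extension argument. Fix vertices $s,t$ and consider the virtual edge $e=\{s,t\}$; it suffices to exhibit a Hamilton cycle through $e$ in $G:=G_1\cup G_2\cup\{e\}$, since deleting $e$ then yields a Hamilton $s$-$t$ path in $G_1\cup G_2$. Let $P^*$ be a longest path in $G$ containing $e$ as one of its edges. The two substantive claims are (a) $V(P^*)=V$, and (b) $P^*$ can be closed into a Hamilton cycle through $e$.

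For (a), suppose $V(P^*)\subsetneq V$. Applying $\mathcal{RE}(\xi)$ to $P^*$, alternative (i) would give a longer $e$-containing path in $G_1\cup P^*\subseteq G$, violating the maximality of $P^*$; hence alternative (ii) furnishes the rotation sets $S_{P^*}$ and $\{T_v\}_{v\in S_{P^*}}$. Next I invoke that $G_2$ complements $G_1$: its alternative (i) is likewise excluded, so I obtain $v\in S_{P^*}$ and $w\in T_v$ with $\{v,w\}\in E(G_1\cup G_2\cup P^*)$. By the definition of $T_v$ there is a path $Q$ of the same length as $P^*$ from $v$ to $w$ containing $e$; because $v,w$ are its endpoints (and $Q$ has more than one edge), $\{v,w\}\notin E(Q)$, so $Q$ together with $\{v,w\}$ forms a cycle $C$ through $e$ with $V(C)=V(P^*)$. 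By connectivity of $G_1$ (part of $\mathcal{RE}(\xi)$) there is an edge $uu'\in E(G_1)$ with $u'\in V(C)$ and $u\notin V(C)$; deleting from $C$ whichever of the two $C$-edges at $u'$ is not $e$ (at most one of them can equal $e$) and then appending $uu'$ produces a path through $e$ strictly longer than $P^*$, a contradiction.

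Thus $P^*$ is a Hamilton path through $e$ with $n-1$ edges. For (b), reapply both properties to $P^*$: alternative (i) is impossible in each (no longer path in $G$ can exist), so alternative (ii) of the complementing property again produces $v\in S_{P^*}$, $w\in T_v$ with $\{v,w\}\in E(G_1\cup G_2\cup P^*)$ together with an associated path $Q'$ of length $n-1$ from $v$ to $w$ containing $e$. Closing $Q'$ with $\{v,w\}$ gives a Hamilton cycle through $e$ in $G$, and removing $e$ gives the required Hamilton $s$-$t$ path. The one delicate point throughout is the cycle-breaking step in (a): the augmented path must still contain $e$, which is precisely what forces the choice of the non-$e$ incident edge at $u'$ to delete; once this is observed, the argument is a routine combination of the two structural properties.
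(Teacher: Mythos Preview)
The paper does not prove this proposition; it is quoted verbatim from \cite{krivelevich2014robust} and used as a black box. Your argument is the standard rotation--extension proof and is correct in outline; it matches what one finds in the cited source.

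Two small points worth tightening. First, when you write $V(C)=V(P^*)$ you are implicitly using that the rotated path $Q$ has the same vertex set as $P^*$; the definition of $\mathcal{RE}(\xi)$ as stated only says $Q$ has the same \emph{length}. In the intended construction (Pos\'a rotations in $G_1\cup P^*$) one indeed has $V(Q)=V(P^*)$, but strictly from the definition all you get is $|V(C)|=|V(P^*)|<n$, which is already enough to find $u\notin V(C)$ and run the extension step. Second, the definition does not explicitly say that the path $Q$ in alternative (ii) lives in $G_1\cup P$ (unlike alternative (i)); you are tacitly assuming this so that $Q\subseteq G$. This is the intended reading and holds in the original paper, but it is worth stating. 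With these clarifications your proof is complete.
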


Again, the notion of a half-expander comes in useful.
\begin{lem}[\text{\cite[Lemma 3.5]{krivelevich2014robust}}]\label{lemPropertyRE}
There exists a positive $\lambda_0$ such that for every positive $\lambda\leq \lambda_0$, the following holds for every $r\geq 16\lambda^{-3}\ln{n}$: every half-expander on $n$ vertices with parameters $\lambda$ and $r$ has property $\mathcal{RE}\left(\frac{1}{2}+\lambda\right)$.
\end{lem}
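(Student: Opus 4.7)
The plan is to deduce property $\mathcal{RE}(1/2+\lambda)$ from the half-expander structure via the Pósa rotation-extension technique, modified so that the distinguished edge $e$ is never destroyed. Fix a path $P=v_0v_1\cdots v_\ell$ with distinguished edge $e=v_jv_{j+1}$, and assume that alternative (i) in the definition of $\mathcal{RE}$ fails, i.e.\ no path containing $e$ longer than $P$ exists in $G\cup P$. In particular both endpoints $v_0$ and $v_\ell$ have all their $G$-neighbors on $V(P)$, which is what makes rotations the only remaining lever.

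First I would define $S_P$ to be the set of right-endpoints reachable from $v_\ell$ by sequences of valid Pósa rotations that (a) keep the left endpoint fixed at $v_0$, and (b) never delete the edge $e$. A valid rotation uses a $G$-edge from the current right-endpoint to some interior vertex $v_i$ with $i\notin\{j,j+1\}$ and swaps it in, producing a new path of the same length and with $e$ still present. The classical Pósa covering observation, adapted to this setting, says that for every $s\in S_P$ each $G$-neighbor $v_i$ of $s$ on $P$ (other than the at most four vertices adjacent to $v_0$ or to $e$) forces another element of $S_P$; equivalently, $N_G(S_P)\subseteq S_P\cup F$ for a correction set $F$ of constant size.

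Now the three half-expander axioms bootstrap $|S_P|$ up to the required threshold. Starting with $|S_P|\ge 1$, while $|S_P|\le\lambda n/r$ axiom (1) gives $|N_G(S_P)|\ge r|S_P|$, and combined with the Pósa covering the set $S_P$ grows geometrically by a factor of roughly $r$ per iteration; after $O(\log n / \log r)$ iterations the scale $n/(\lambda r)$ is reached. At that scale axiom (2) forces $|N_G(S_P)|\ge(1/2-\lambda)n$, and the covering gives $|S_P|\ge (1/2-\lambda)n-O(1)$. A final step uses axiom (3): if $S_P$ and its complement were both of size in $[(1/2-\lambda^{1/5})n,(1/2+\lambda^{1/5})n]$, there would be more than $2n$ crossing edges, each of which would supply a further rotation pushing a new vertex into $S_P$, a contradiction. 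This tightening gives $|S_P|\ge(1/2+\lambda)n$. For each $v\in S_P$, fix a witnessing $v_0,v$-path $P_v$ of length $\ell$ containing $e$ and repeat the rotation argument with $v$ now held fixed and rotations performed at the left end; the same three-step bootstrap produces a set $T_v$ of reachable left-endpoints of size at least $(1/2+\lambda)n$, with each $w\in T_v$ yielding a $w,v$-path of length $\ell$ through $e$, which is exactly clause (ii) of $\mathcal{RE}(1/2+\lambda)$.

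The main obstacle will be running the rotations in a way that reliably avoids $e$ while accounting for the vertices we must exclude; the Pósa covering then loses only a constant number of endpoints per iteration and a $\log n$-bounded amount over the whole growth process. This is exactly where the hypothesis $r\ge 16\lambda^{-3}\ln n$ enters: it provides enough slack to absorb the constant-size correction $F$, the $O(\log n)$ cost of the geometric growth phase, and the small gap between $(1/2-\lambda)n$ and $(1/2+\lambda)n$ that axioms (2) and (3) must bridge.
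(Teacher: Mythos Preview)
This lemma is not proved in the paper at all; it is quoted from \cite{krivelevich2014robust} and used as a black box. Your outline via the P\'osa rotation--extension technique is precisely the method of that reference, so in that sense there is nothing to contrast.

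One point worth tightening: the containment you state as $N_G(S_P)\subseteq S_P\cup F$ with $|F|=O(1)$ is not what the P\'osa observation actually delivers. What it gives is that every $G$-neighbour $v_i$ of an endpoint $s\in S_P$ lies on the path, and the rotation turns a \emph{path-neighbour} of $v_i$ (not $v_i$ itself) into the new endpoint. Hence $N_G(S_P)$ is contained, up to the $O(1)$ exceptional vertices near $v_0$ and $e$, in the set of path-neighbours of $S_P$, which yields only $|N_G(S_P)|\le 2|S_P|+O(1)$ rather than $|N_G(S_P)|\le |S_P|+O(1)$. This matters because the single closure inclusion then only forces $|S_P|>\lambda n/r$, which is short of the threshold $n/(\lambda r)$ needed for axiom~(2). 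The geometric growth you describe must therefore be run as a genuinely iterative, level-by-level rotation tree (as in \cite{krivelevich2014robust}), where at each level one carefully extracts roughly $r$ new endpoints per current endpoint; this is also where the hypothesis $r\ge 16\lambda^{-3}\ln n$ does its real work, absorbing the $O(\log n)$ levels of growth and bridging the gap between the regimes of axioms~(1) and~(2). With that adjustment your plan is sound and matches the cited proof.
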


The next lemma and its proof are based on \cite{LeeSudakov2011} and adapted to our situation.

\begin{lem}\label{lemComplements}
For all $0<\lambda\leq 1/2$ there is a $\beta>0$ such that for all $\delta>0$,
for the uniform random graph $G=G(n,m)$ with 
$m=\delta n\ln{n}\ln{\ln{n}}$ edges, the following holds with probability at least 
$1-e^{-\Omega(m)}$:\\ 
For every graph $H$ with maximum degree $\Delta(H)\leq \frac{m}{8n}$, 
the graph $G-E(H)$ complements every subgraph $R\subseteq G$ with property 
$\mathcal{RE}(\frac{1}{2}+\lambda)$ that has at most $\beta m$ edges.
\end{lem}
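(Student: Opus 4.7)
The plan is to reduce the complementing condition to a concrete edge-existence statement in $G$, and then to combine hypergeometric concentration with a carefully weighted union bound. Fix $R\subseteq G$ with property $\mathcal{RE}(1/2+\lambda)$ and $|E(R)|\le \beta m$, and a path $P$ with distinguished edge $e$. If case~(i) of Definition~\ref{defComplementsRE} already holds there is nothing to prove; otherwise the $\mathcal{RE}$ property provides sets $S_P$ and $\{T_v\}_{v\in S_P}$, each of size at least $(1/2+\lambda)n$, and it suffices to find an edge of $G-E(H)$ in
\[F_P:=\{\{v,w\}:v\in S_P,\ w\in T_v\}.\]
A double counting gives $|F_P|\ge (1/2+\lambda)^2 n^2/2\ge n^2/8$, while $\Delta(H)\le m/(8n)$ forces $|E(H)|\le m/16$. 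Hence it is enough to prove the $H$-free inequality $|G\cap F_P|>m/16$.

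For a fixed $(R,P)$, the random variable $|G\cap F_P|$ is hypergeometric with parameters $(\binom{n}{2},|F_P|,m)$ and mean at least $m/4$, so Theorem~\ref{thm:hypergeometric} yields $\Pr[|G\cap F_P|\le m/8]\le e^{-\Omega(m)}$ (and a matching bound survives conditioning on $R\subseteq G$, as $F_P\setminus R$ still has at least $n^2/9$ edges). Bounding the expected number of bad triples $(R,P,e)$,
\[\mathbb{E}[\#\text{bad}]\;\le\;\sum_{R,P,e}\Pr[R\subseteq G]\cdot e^{-\Omega(m)}.\]
The identity $\sum_{R\subseteq K_n,\,|E(R)|=r}\Pr[R\subseteq G]=\binom{m}{r}$ replaces the naive factor $\binom{\binom{n}{2}}{r}\sim e^{r\ln n}$ by $\binom{m}{r}$, contributing a total of at most $(e/\beta)^{\beta m}=e^{O(\beta m\ln(1/\beta))}$ when summed over $r\le\beta m$. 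Restricting to paths $P\subseteq R\cup(G-E(H))\subseteq G\cup E(H)$, which are the only ones relevant for the application of Proposition~\ref{propHamConnected}, and using the standard high-probability bound $\Delta(G)=O(\ln n\ln\ln n)$ for $G(n,m)$, the number of such paths with a distinguished edge is at most $n\cdot\Delta(G\cup E(H))^n = e^{O(n\ln\ln n)} = e^{o(m)}$. Combining, the expected number of bad triples is $e^{O(\beta m\ln(1/\beta))}\cdot e^{-\Omega(m)}$, which is $e^{-\Omega(m)}$ whenever $\beta>0$ is chosen small enough that $\beta\ln(1/\beta)$ is smaller than the constant hidden in the concentration exponent.

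The main obstacle is precisely the taming of this union bound, whose naive counts over $R$ and $P$ would both swamp the $e^{-\Omega(m)}$ concentration. Two critical savings make the argument go through. First, demanding the $H$-free strengthening $|G\cap F_P|>|E(H)|$ lets us avoid any union bound over $H$ entirely, replacing it by a deterministic comparison to $m/16$. Second, the constraint $R\subseteq G$ (rather than $R\subseteq K_n$) cuts the effective entropy of the $R$-count from $r\ln n$ down to $O(r\ln(1/\beta))$, which for small $\beta$ comfortably beats the exponential concentration; the $P$-count is then controlled by the standard maximum-degree bounds in $G(n,m)$.
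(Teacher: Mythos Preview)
Your overall architecture is right and close to the paper's, but there is a genuine gap in the union bound over paths.

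\textbf{The gap.} Definition~\ref{defComplementsRE} requires the complementing condition for \emph{every} path $P$ in $K_n$, not only for paths contained in $R\cup(G-E(H))$ or in $G$. By restricting to such paths ``relevant for the application of Proposition~\ref{propHamConnected}'' you are no longer proving the lemma as stated; you are proving a weaker statement that would need its own justification via the internals of the rotation--extension argument in~\cite{krivelevich2014robust}. Worse, your restricted family of paths depends on $H$, which reintroduces exactly the dependence on $H$ you said you wanted to avoid, and it also depends on the random graph $G$ itself, so you would have to intersect with a high-probability degree event before you can even count the paths, and then argue that this conditioning does not ruin the hypergeometric bound.

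\textbf{The fix is much simpler than what you wrote.} The paper just takes the crude bound: the number of pairs $(P,e)$ with $P$ an arbitrary path and $e\in E(P)$ is at most $n\cdot n! = e^{O(n\ln n)}$. Since $m=\delta n\ln n\ln\ln n$, this is $e^{o(m)}$ and is absorbed by the $e^{-\Omega(m)}$ concentration, with no restriction on $P$ whatsoever. Your ``saving'' from $e^{O(n\ln n)}$ to $e^{O(n\ln\ln n)}$ buys nothing and costs you correctness.

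\textbf{What is fine.} Your handling of $H$ via $|E(H)|\le m/16$ and the target $|G\cap F_P|>m/16$ is correct and pleasantly simpler than the paper's device (the paper instead picks a small subset $S_P'\subseteq S_P$ of size $\lambda n$ so that $H$ can delete at most $|S_P'|\cdot\Delta(H)=\lambda m/8$ of the relevant edges). Your identity $\sum_{R:\,|E(R)|=r}\Pr[R\subseteq G]=\binom{m}{r}$ is correct and gives the same $(e/\beta)^{\beta m}$ contribution as the paper's estimate $\binom{N}{r}(m/N)^r\le(em/r)^r$. The conditioning on $R\subseteq G$ is handled correctly in both approaches; the paper makes it slightly cleaner by first observing that if any pair $\{v,w\}$ with $v\in S_P$, $w\in T_v$ lies in $E(R)$ the complementing condition is already satisfied, so one may assume the target edge set is disjoint from $E(R)$.
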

\begin{proof}

Let us fix a graph $R\subseteq K_n$ with at most $\beta m$ edges such that $R\in \mathcal{RE}\left( \frac{1}{2} +\lambda\right)$.  
We will estimate the probability that $R\subseteq G$ and there exists an $H$ with maximum degree $\Delta(H)\leq \frac{m}{8n}$, such that
the graph $G-E(H)$ does not complement $R$. 

For this we fix a path $P$ and an edge $e\in E(P)$ and estimate from above 
the probability that there exists an $H$ with maximum degree $\Delta(H)\leq \frac{m}{8n}$, such that
(i) in $R\cup P$ no path containing $e$ is longer than $P$ and (ii) for every $v\in S_P$ and every $w\in T_v$ we have $vw \not\in E((G-E(H)) \cup R \cup P)$
(where $S_P$ is the set of size $\size{S_P}\geq \left(\frac{1}{2}+\lambda \right)n$ and $T_v$ the set of size $\size{T_v}\geq \left(\frac{1}{2}+\lambda \right)n$ 
from Definition~\ref{defRE} applied to $R$).
Observe that (i) is not a random statement, hence we can assume that it holds for $P$ and $e$, otherwise the probability is $0$.

Note also that if there exists a vertex $v\in S_P$ and a $w\in T_v$ such that $\{v,w\} \in E(R)$, then the probability is $0$ as well. 
Thus from now on we also assume that for all $v\in S_P$ and all $w\in T_v$, the edge $\{v,w\} \not\in E(R)$.

Let now $S_P'=\left\{v_1,\dots,v_{\lambda n}\right\}$ be an arbitrary subset of $S_P$ of size $\size{S_P'}=\lambda n$. For all $v\in S_P'$, let $T_v'$ be a subset of $T_v\setminus S_P'$ of size $\size{T_v'}=\frac{1}{2}n$. Note that the edge sets $E_v'=\left\{\left\{v,w\right\}:w\in T_v'\right\}$ for $v\in S_P'$ are all disjoint, since $S_P'$ is disjoint from every $T_v'$. Hence their union 
$$E':=\left\{\{v,w\}:v\in S_P', w\in T_v'\right\}$$
has size $|E'|= \frac{\lambda}{2}n^2$.

We will show that with high probability, for every $H$ with $\Delta(H)\leq \frac{m}{8n}$, there is a $v\in S_P'$ and a $w\in T_v'$ such that $\{v,w\}$ is an edge of $G-E(H)$. 
For that, it is sufficient that, independently of $H$, there are at least $\lambda m/4$ edges in $E(G)\cap E'$.
Indeed, removing the edges  of any graph $H$ with maximum degree $\Delta (H) \leq \frac{m}{8n}$ can eliminate at most 
$\size{S_P'}\frac{m}{8n}\leq \frac{\lambda m}{8}$ edges from $E(G)\cap E'$, 
which means that at least $\frac{\lambda m}{8}>0$ edges of $E'$ are left in $(E(G)\setminus E(H))\cap E'$.

Recall that we assumed that $E(R)$ is disjoint from $E'$, but condition on $E(R)\subseteq E(G)$. 
Thus, the size of $E(G)\cap E'$ has a hypergeometric distribution with parameters $F={n\choose 2}\setminus \size{E(R)}\leq n^2/2$, 
$f=\size{E'}=\lambda n^2/2$ and $l=m-\size{E(R)}\geq m/2$. Hence for the expectation we have 
$\frac{fl}{F} \geq \frac{\lambda m}{2}$ and then Theorem \ref{thm:hypergeometric} implies that
\begin{align*}
\prob{\size{E(G)\cap E'}\leq \lambda m/4\given R\subset G}\leq e^{-\lambda m/16}.
\end{align*}
Taking the union bound for all choices of $P$ and $e\in E(P)$ we obtain that 
\begin{align*}
\prob{G\text{ does not complement } R\given R\subset G}\leq n n! e^{-\lambda m/16}.
\end{align*}
Finally, taking the union bound for all $R\subseteq K_n$ with at $k \leq \beta m$ edges and using that 
$\prob{R\subset G}
\leq (\frac{m}{{n\choose 2}})^k,$
we obtain that our failure probability is at most
\begin{align*}
& \sum_{R\in\mathcal{RE}(\frac{1}{2}+\lambda), \size{E(R)}\leq \beta m} \prob{G\text{ does not complement }R\given R\subset G} \prob{R\subset G}\\
&\leq n n! e^{-\lambda m/4}\sum_{k=1}^{\beta m}{{n\choose 2}\choose k}\left(\frac{m}{{n\choose 2}}\right)^k
\leq e^{-\lambda m/5} \sum_{k=1}^{\beta m}\left(\frac{em}{k}\right)^k\\
& \leq  e^{-\lambda m/5}  \beta m\left(\frac{e}{\beta}\right)^{\beta m}.
\end{align*}
Here we used that the terms of the last sum are monotone increasing for $k\leq \beta m$, as long as $\beta <1$. 
Thus the event of the lemma fails  with probability $e^{-\Omega(m)}$ provided $\beta$ is sufficiently small.
\end{proof}

The next statement wraps up this section.
\begin{cor}\label{cor:GnmHamConnected}
There is a $\gamma>0$ such that for every $\delta >0$ and every family $\mathcal{D} \subseteq {[n]\choose \leq \ln^2 n}$ of at most $n^{3\ln{n}}$ vertex subsets of size at most $\ln^2{n}$ each,  the following holds with probability at least $1-e^{-\Omega(\ln n \ln \ln n)}$:\\
For every $D\in\mathcal{D}$ and every graph $H\subset K_n$ with $\Delta(H)\leq \gamma\frac{m}{n}$, the graph $G(n,m)- E(H)-D$, 
with $m=\delta n\ln{n}\ln{\ln{n}}$, is Hamilton connected.
\end{cor}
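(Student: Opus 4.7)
The plan is to combine Lemma~\ref{lem:halfExpander_GM_D}, Lemma~\ref{lemPropertyRE}, Lemma~\ref{lemComplements}, and Proposition~\ref{propHamConnected} via a two-round exposure of $G(n,m)$. Fix $\lambda>0$ small enough that Lemmas~\ref{lem:halfExpander_GM_D} and~\ref{lemPropertyRE} both apply, and let $\beta=\beta(\lambda)$ be the constant from Lemma~\ref{lemComplements}. Set $m_1:=cm$ with a small universal constant $c=c(\lambda,\beta)$ (e.g.\ $c=\beta/2$), and put $\gamma:=c/32$. Split $G(n,m)$ as an independent union $G_1\cup G_2$, where $G_1\sim G(n,m_1)$ and $G_2$ consists of the remaining $m-m_1$ edges.

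First, apply Lemma~\ref{lem:halfExpander_GM_D} to $G_1$ (with density parameter $c\delta$ and the given family $\mathcal{D}$): with failure probability $e^{-\Omega(\ln n\ln\ln n)}$, simultaneously for every $D\in\mathcal{D}$ and every $H$ with $\Delta(H)\leq \gamma m/n\leq m_1/(32n)$, the graph $R_D:=G_1-E(H)-D$ is a half-expander on $V\setminus D$ with parameters $\lambda$ and $r_1:=m_1/(16n)$. Since $\delta$ is fixed, $r_1\geq 16\lambda^{-3}\ln n$ for $n$ large enough, so Lemma~\ref{lemPropertyRE} promotes $R_D$ to a graph with property $\mathcal{RE}(\tfrac{1}{2}+\lambda)$, uniformly over $D$ and $H$.

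Next, apply Lemma~\ref{lemComplements} to $G=G(n,m)$: with failure probability $e^{-\Omega(m)}$, for every $H$ with $\Delta(H)\leq m/(8n)$ (which is satisfied since $\gamma\leq 1/8$), $G(n,m)-E(H)$ complements every $R\subseteq G(n,m)$ in $\mathcal{RE}(\tfrac{1}{2}+\lambda)$ with at most $\beta m$ edges. Take $R=R_D$: indeed $R_D\subseteq G_1\subseteq G(n,m)$ and $|E(R_D)|\leq m_1=cm\leq \beta m$. The witness edge $\{v,w\}$ provided by the complements property has $v\in S_P$ and $w\in T_v$, both of which are vertices of $R_D$, hence lie in $V\setminus D$; therefore the edge also belongs to $G(n,m)-E(H)-D$. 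Consequently $G(n,m)-E(H)-D$ itself complements $R_D$, and Proposition~\ref{propHamConnected} gives that $R_D\cup(G(n,m)-E(H)-D)=G(n,m)-E(H)-D$ is Hamilton connected. The main subtlety, and the only real obstacle in this argument, is exactly this last step: Lemma~\ref{lemComplements} is stated without vertex deletion, but by deliberately building $R_D$ with $D$ already removed, its ambient vertex set is $V\setminus D$ and the witness sets $S_P,T_v$ automatically sidestep $D$, so the "extra" edge the complements lemma produces never uses a vertex of $D$.
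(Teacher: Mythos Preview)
Your argument assembles exactly the same ingredients as the paper --- a sparse random subgraph $G_1\subseteq G$ to which Lemmas~\ref{lem:halfExpander_GM_D} and~\ref{lemPropertyRE} are applied, followed by Lemma~\ref{lemComplements} and Proposition~\ref{propHamConnected} --- and your two-round exposure is precisely the paper's choice of a uniformly random $G'\subseteq G$ with $\beta m/2$ edges. The gap is in your invocation of Lemma~\ref{lemComplements}. That lemma, as stated and proved, requires $R$ to have property $\mathcal{RE}(\tfrac12+\lambda)$ \emph{as a graph on $[n]$}: the proof takes $|S_P|,|T_v|\geq(\tfrac12+\lambda)n$ with this $n$ and then extracts $|S_P'|=\lambda n$, $|T_v'|=n/2$. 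Your $R_D$ lives on $V\setminus D$; regarded on $[n]$ it has the vertices of $D$ isolated, hence is disconnected, hence is not in $\mathcal{RE}(\tfrac12+\lambda)$ at all, and the lemma cannot be quoted for it. Your remark that the witness sets $S_P,T_v$ sit inside $V\setminus D$ is correct, but it answers the downstream question (why does the produced edge avoid $D$?) while leaving the upstream one (why may the lemma be applied to $R_D$?) open.

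The paper handles this by applying Lemma~\ref{lemComplements} not to $G$ but to $G-D$: after first checking that $|E(G-D)|\geq m/2$ (which fails with probability $e^{-\Omega(m\ln n)}$) and then conditioning on $|E(G-D)|=k$, the graph $G-D$ is distributed as $G(n-|D|,k)$, so the lemma applies on the correct vertex set $[n]\setminus D$. This introduces dependence on $D$, and the paper then union-bounds over the at most $n^{3\ln n}$ members of $\mathcal{D}$, which the $e^{-\Omega(m)}$ failure probability comfortably absorbs. Your shortcut can be made rigorous too, but only by reopening the proof of Lemma~\ref{lemComplements} and checking it tolerates $R$ on $n-o(n)$ vertices (it does, with trivial changes); as a black-box application it does not go through.
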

\begin{proof}
Let $\lambda_0$ as in Lemma \ref{lemPropertyRE} and $0<\lambda<\min\left(\lambda_0,2^{-11}\right)$. Let $0<\beta<1$ such that Lemma \ref{lemComplements} holds and let $\gamma=\frac{\beta}{32}$. Let $\mathcal{D} \subseteq {[n]\choose \leq \ln^2 n}$
be a family of at most $n^{3\ln{n}}$ vertex subsets of size at most $\ln^2{n}$ each. Then let $G\sim G(n,m)$ be a random graph and
let $G'$ be a uniformly random subgraph of $G$ with $\frac{\beta m}{2}$ edges. 
Let ${\cal E}$ be the event that for every $D\in\mathcal{D}$ and $H\subseteq K_n$ with $\Delta(H)\leq \gamma\frac{m}{n}$, the graph $G'-E(H)-D$ is a half-expander with parameters $\lambda$ and $r=\frac{\beta m}{32n}$. By definition, $G'$ is distributed like $G\left(n,\frac{\beta m}{2}\right)$ and thus by Lemma \ref{lem:halfExpander_GM_D}, ${\cal E}$ holds with probability at least $1-e^{-\Omega(\ln n \ln \ln n)}$.

We now fix a $D\in\mathcal{D}$. Let ${\cal A}_D$ be the event that $G-D$ has at least $m/2$ edges.
We show that ${\cal A}_D$ fails with probability at most $e^{-\Omega(n)}$. 
Let $N:={n\choose 2}$. Note that there are at most $n\size{D}$ edges incident to $D$. 
Then the probability that removing $D$ from $G$ removes at least $k=\frac{1}{2} m$ edges is at most
\begin{align*}
\frac{{{n\size{D}}\choose{k}}{{N-k}\choose{m-k}}}{{N\choose m}}\leq \left(\frac{en\size{D}}{k}\right)^k\left(\frac{m}{N}\right)^k\leq \left(\frac{4e\ln^2{n}}{n-1}\right)^{k}\leq e^{-\Omega(m\ln n)}.
\end{align*}
From now on we condition on ${\cal A}_D$ holding.

Let ${\cal B}_D$ be the event that for every $H\subseteq K_{n-|D|}$ with $\Delta (H) \leq \frac{m}{16(n-|D|)}$, 
$G-E(H)-D$ complements every one of its subgraphs with at most $\frac{\beta m}{2}$ edges that 
has property $\mathcal{RE}\left(\frac{1}{2}+\lambda\right)$. 

Now we condition further on $G-D$ having exactly $k\geq m/2$ edges. Note that then $G-D$ is distributed as $G(n-\size{D},k)$. 
Under this condition then, by Lemma \ref{lemComplements}, ${\cal B}_D$ fails with probability at most $e^{-\Omega(m)}$. 
Since the events $\size{E(G-D)}=k$, $k\geq m/2$ partition the event ${\cal A}_D$, we obtain that 
$\prob{\overline {B_D} | {\cal A}_D}\leq e^{-\Omega(m)}$.
Hence, in total we get that 
$$\prob{ \bigcup_{D\in {\cal D}} \overline {B_D} } \leq |{\cal D}| \left(e^{-\Omega(m)}+e^{-\Omega(m\ln n)}\right) = e^{-\Omega(m)}.$$

Thus, with probability at least $1-e^{-\Omega(\ln n \ln \ln n)}$ both
the event ${\cal E}$ and the events ${\cal B}_D$ hold for every $D\in\mathcal{D}$.

If ${\cal E}$ holds, then by Lemma \ref{lemPropertyRE}, $G'-E(H)-D$ has property $\mathcal{RE}\left(\frac{1}{2}+\lambda\right)$ for every $D\in\mathcal{D}$ and $H\subseteq K_n$ with $\Delta(H)\leq \gamma\frac{m}{n}$.

If ${\cal B}_D$ holds, then $G-E(H)-D$ complements the $G'-E(H)-D$, because the latter has property $\mathcal{RE}\left(\frac{1}{2}+\lambda\right)$ and has at most $\frac{\beta m}{2}$ edges.
Thus, by Proposition \ref{propHamConnected}, $G-E(H)-D$ is Hamilton connected (recall that $G'$ is a subgraph of $G$). 
\end{proof}

\subsubsection{Proof of the Hamiltonicity threshold}

We are now ready to return to the Hamiltonicity game. 
\begin{proof}[Proof of upper bound in Theorem \ref{thm:RMCB-hamiltonicity}]
Let $\epsilon>0$ fixed and  let $a= (1+\epsilon)\ln{\ln{n}}$. Furthermore 
let \CB play according to an arbitrary fixed strategy $S$. 

Fix an $\alpha < \frac{1}{16 e}\min\{ \gamma_{(\ref{cor:GnmHamConnected})}, \frac{1}{32}\}$, such that inequality \eqref{eq:alpha} holds as well. 
Recall that $t=\frac{\alpha}{2}n\ln{n}$. 
We show that \RM builds a Hamilton cycle in the first $t$ rounds of the $(a:1)$-biased Hamiltonicity game a.a.s.
 
By Proposition~\ref{prop:permutationStrategy} we work in the setup where \RM plays according to  
a random permutation  $\sigma\in S_{E(K_n)}$  against \CB's fixed strategy $S$. 
To use our random graph statements we generate $\sigma$ in three steps.
First we select the initial segment $\sigma_1$ of the first $\frac{at}{2}$ edges of $\sigma$ uniformly at random.
Then, independently, we select another sequence $\sigma_2$ of $\frac{at}{2}$ edges uniformly at random from {\em all} 
$\frac{{n\choose 2}!}{\left( {n\choose 2} - \frac{at}{2} \right)!}$
 choices and append, in this order, those edges of $\sigma_2$ to $\sigma_1$ which do not appear in it already. 
Finally, we choose a uniformly random permutation $\sigma_3$ of the rest of the edges and append it, to obtain $\sigma$.
We define the set $M_2= M_2(\sigma_1, \sigma_2) \subseteq [at]$ to be the set of those coordinates where the edges of $\sigma_2$ appear in $\sigma$.

We thus refined the probability space to a triplet $\left(\sigma_1,\sigma_2,\sigma_3\right)$. But still, clearly the permutation $\sigma$ created this way is a uniformly random permutation of the edges of $K_n$. Further, the graphs $\G{[at/2]}$ and 
$\G{M_2}$ as defined in Section~\ref{sec:goodvertices} are independent and are drawn independently from the distribution of $G(n,at/2)$. We define five events. Let $\delta=\frac{\alpha}{4}(1+\epsilon)$.  

First, let ${\cal A}$ be the event containing those triplets $(\sigma_1,\sigma_2,\sigma_3)$ 
that $\Delta\left(\Hgraph{[at]}\right)\leq \frac{8e\alpha at}{n}$ and let  ${\cal A}_1$ be the event containing those $\sigma_1$ 
for which $\Delta\left(\Hgraph{[at/2]}\right)\leq \frac{8e\alpha at}{n}$ (note that $\Hgraph{[at/2]}$ depends only on $\sigma_1$).
Observe that ${\cal A}$ implies ${\cal A}_1$ and by Lemma \ref{lemBlockedLocalUpperbound}, 
${\cal A}$ holds a.a.s.

Furthermore, let ${\cal B}_1$ be the event containing those $\sigma_1$ such that the uniform random 
graph $\G{[at/2]}$, with $\frac{at}{2}=m=\delta n \ln n \ln\ln n$ edges, has the property that 
for any subset $B\subseteq V$, $|B| \leq \ln n$, any sequence of at most $k\leq \ln n$ pairs of vertices 
$a_1, \ldots a_k, b_1, \ldots b_k \in V\setminus B$, 
and any graph $H\subseteq K_n$ with maximum degree 
$\Delta (H) \leq \frac{m}{32 n}$ there exists $k-1$ pairwise disjoint paths 
$P_i \subseteq \G{[at/2]}-E(H)-B-\{ a_1, b_k\}$, $i=1, \ldots k-1$, of length $\leq \ln n$ each,
connecting $b_{i}$ to $a_{i+1}$. 
By Corollary \ref{cor:GnmShortPaths}, we have that ${\cal B}_1$ holds a.a.s.

Let $\sigma_1 \in {\cal A}_1 \cap {\cal B}_1$. For a set $B\in {V\choose \leq \ln n}$ and for a sequence $a_1, \ldots a_k, b_1, \ldots, b_{k} \in V\setminus B$ of 
at most $2\ln n$ distinct vertices let us denote by $D^* (B, a_1, \ldots a_k, b_1, \ldots, b_{k}) \subseteq V$ 
the union of $B\cup \{ a_2, \ldots , a_k, b_1, \ldots , b_{k-1}\}$ with the union of 
the vertex sets of the $k-1$ pairwise disjoint paths of length $\leq \ln n$
connecting $b_i$ to $a_{i+1}$, for $i=1, \ldots k-1$, in $\G{[at/2]} - 
E(\Hgraph{[at/2]}) - B -\{ a_1, b_k\} $. 
Note that these $k-1$ paths do exist since the
maximum degree $\Delta (\Hgraph{[at/2]}) \leq \frac{8e\alpha at}{n} \leq \frac{m}{32 n}$ by $\sigma_1\in {\cal A}_1$ and hence the property from ${\cal B}_1$ can be applied.
(In case the choice of the family of paths is not unique then it is selected according to an arbitrary, but fixed preference 
order.)

Let us denote by ${\cal D}^*(\sigma_1) = {\cal D}^*$ the family containing 
$D^* (B, a_1, \ldots a_k, b_1, \ldots, b_{k})$ for all choices of 
$B\in {V\choose \leq \ln n}$,  and $a_1, \ldots , a_k,$ $b_1, \ldots , b_{k}\in V\setminus B$.  Clearly, $|{\cal D}^*| < n^{3\ln n}$. 
Furthermore note that every $D^* \in {\cal D}^*$ has at most 
$\ln n + (\ln n -1) \ln n = \ln^2 n$ elements.

Let ${\cal B}_2$ be the event containing the pairs $(\sigma_1,\sigma_2)$ such that $\sigma_1 \in {\cal A}_1 \cap {\cal B}_1$ and that the
uniform random graph $\G{M_2} \sim G(n, \frac{at}{2})$, has the property 
that for every $D\in\mathcal{D}^*(\sigma_1)$ and any graph $H$ with maximum degree 
$\Delta (H) \leq \gamma_{(\ref{cor:GnmHamConnected})} \frac{m}{n}$  the graph $\G{M_2}-E(H) - D$ is Hamilton connected. 
Note that by Corollary~\ref{cor:GnmHamConnected}, the event ${\cal B}_2$ conditioned on any $\sigma_1\in {\cal A}_1 \cap {\cal B}_1$ holds with probability at least $1-e^{-\Omega(\ln n \ln \ln n)}$. Here it is crucial that, although $M_2$ depends on both $\sigma_1$ and $\sigma_2$, the graph $G\left(M_2\right)$ is independent of $\sigma_1$ by construction.
 
Finally, we let ${\cal S}$ be the event containing those triplets $(\sigma_1,\sigma_2,\sigma_3)$ such that after $t$ rounds there are disjoint paths 
$Q_1,\dots, Q_k$, $k\leq \ln{n}$, covering the set 
$B$ of $\alpha$-bad vertices and having their endpoints, and only those, among the $\alpha$-good vertices. 
Note that by Corollary \ref{cor:PathsOnB},  ${\cal S}$ holds a.a.s.

Then, formally, we have that
\begin{align*} 
& \prob{\overline{{\cal A}}} + \prob{\overline{{\cal B}_1}} + \prob{\overline{{\cal B}_2}} + \prob{\overline{{\cal S}}} =\\
& = \prob{\overline{{\cal A}}} + \prob{\overline{{\cal B}_1}} + \sum_{\sigma_1 \in {\cal A}_1\cap {\cal B}_1} \prob{ \overline{{\cal B}_2} \given \sigma_1}\prob{\sigma_1} + \prob{\overline{{\cal S}}}\\
&= o(1)+o(1)+e^{-\Omega(\ln n\ln \ln n)}\sum_{\sigma_1 \in {\cal A}_1\cap {\cal B}_1} \prob{\sigma_1}+o(1)=o(1).
\end{align*}

It remains to show that for any triplet $(\sigma_1, \sigma_2, \sigma_3)$ such that $(\sigma_1, \sigma_2, \sigma_3)\in {\cal A}\cap {\cal S}$, $\sigma_1 \in {\cal B}_1$ and 
$(\sigma_1,\sigma_2)  \in {\cal B}_2$ hold, \RM following the permutation strategy according to the $\sigma$ induced by the 
triplet $(\sigma_1, \sigma_2, \sigma_3)$ builds a Hamilton cycle against \CB playing with his fixed strategy $S$ (by the end of round $t$).

First we show that \RM's graph after $t$ rounds contains a single path of length at most $\ln^2 n$
covering the set $B$ of all $\alpha$-bad vertices. 
Indeed, ${\cal S}$ guarantees paths $Q_1,\dots,Q_k$, $k\leq \ln{n}$, partitioning $B$, and having their endpoints 
$a_1, b_1, a_2, b_2, \ldots, a_k, b_k$, and only those 
among the $\alpha$-good vertices. Recall that $\size{B}\leq \ln{n}$. 
Since $\sigma_1 \in {\cal B}_1 \cap {\cal A}_1$, there are  
$k-1$ pairwise disjoint paths $P_i \subseteq \G{[at/2]}-E(\Hgraph{[at/2]})-B-\{ a_1, b_k\}$, 
$i=1, \ldots k-1$, of length at most $\ln n$ connecting $b_i$ to $a_{i+1}$. Since only good vertices are involved in these paths, by Lemma \ref{lemGoodBlocked} the paths are indeed in \RM's graph.
The concatenation of the paths $P_i$ and $Q_j$ gives a single $a_1,b_k$-path $P$
of length at most $\ln^2 n$ covering all bad vertices.

Since $\left(\sigma_1,\sigma_2, \sigma_3\right)\in {\cal A}$ and $(\sigma_1,\sigma_2) \in {\cal B}_2$,
the graph $\G{M_2}-\Hgraph{[at]}-\left(V(P)\setminus \{ a_1, b_k\}\right) $ is Hamilton connected, and thus contains a 
Hamilton path $Q$ connecting $a_1$ and $b_k$. Note that removing $V(P)\setminus \{ a_1, b_k\}$ removes all bad vertices and thus, again by Lemma \ref{lemGoodBlocked}, $Q$ is contained in \RM's graph.

The concatenation of $Q$ and $P$ gives a Hamilton cycle.
Here we used that even though $\G{[at/2]}$ and $\G{M_2} $ might have common edges, for $Q$ we used 
only those edges of $\G{M_2}$ that are left after deleting the internal vertices of $P$.
\end{proof}

\section{Remarks and Open Problems}
In this paper we determined the sharp threshold bias of the minimum-degree-$k$, connectivity and Hamiltonicity games
in the half-random \RM vs \CB scenario. To prove that the sharp
threshold bias of the half-random $k$-connectivity game is also 
$\ln \ln n$, we can proceed as we did when deriving connectivity and
minimum-degree-$k$. Suppose there is a vertex cut $S$ of size at most
$k-1$ in \RM's graph. Note that for every bad vertex there exists $k$
vertex disjoint paths to good vertices, so the deletion of $k-1$ vertices will
not disconnect all of these paths: any bad vertex will still be
connected to a good vertex after the deletion of $S$. So it is enough
to show that the graph of \RM induced by the good vertices is not 
disconnected with the removal of $S$. This can be done similarly as 
we show the $1$-connectedness of the graph: the only difference 
is that for any $X$, $|X| \leq n/2$ we show that the number of edges 
going to $\overline{X}\setminus S$ is at least $\Omega(|X| \ln n \ln
\ln n) >0$. For this one needs to also subtract from the calculations there 
the number of edges incident to $S$, which is negligeble.

It would also be interesting to study other natural half-random games, for example non-planarity, 
non-$k$-colorability, and $k$-minor games, as well as their
half-random Avoider-Enforcer and Waiter-Client variants.

Further, it is well-known that for a fixed graph $H$ the 
threshold bias $n^{1/m(H)}$ of the random $H$-building game is coarse. 
It is unclear however whether the \RM vs \CB 
half-random $H$-creation game cannot have a sharp threshold bias, we tend 
to think it does.
Note that by~\cite{bednarska2000biased} 
we know the order of magnitude of the threshold, it is $n^{1/m_2(H)}$, 
where $m_2(H)$ is the usual maximum $2$-density of $H$.

\bibliography{mainLiterature}{}
\bibliographystyle{plain}

\section{Appendix}

\begin{proof}[Proof of Lemma~\ref{lem:EdgesToComplement}]
Fix a set $X\subset [n]$ of size $\size{X}\leq n/2$. 
Applying Theorem \ref{thm:hypergeometric} to the edge set $\left(X,\overline{X}\right)$ between $X$ and its complement, 
with $F={n\choose 2}$ and $l=m$, the probability that less than $\frac{m|X|(n-|X|)}{2{n\choose 2}}\geq \size{X}\frac{m}{2n}$ 
edges are present in $G$ between $X$ and $\overline{X}$ is at most $e^{-\size{X}\frac{m}{8n}}$.
Taking the union bound over all subsets $X$, we obtain that the failure probability is $\sum_{k=1}^{n/2} {n \choose k}e^{-k\frac{m}{8n}} = 
\sum_{k=1}^{n/2} e^{k(O(\ln n) - \Omega(\ln n \ln \ln n))} = o(1).$
\end{proof}

\begin{proof}[Proof of Lemma \ref{lem:smallSetsHaveLargeNeighborhood}]
The following claim directly implies part (a) of the lemma.\\[2mm]
\textbf{Claim:} With probability at least $1- e^{-\Omega (\ln n \ln \ln n)}$ for every vertex set $X$ of size at most $\size{X} \leq \frac{n^2}{m}$ 
there is a sequence of vertices $v_1,\dots,v_{\size{X}/2}\in X$ and disjoint sets 
$N_1,\dots,N_{\size{X}/2}\subset [n]\setminus X$ of size $\frac{m}{4n}$ each, such that for all $i=1, \ldots , \frac{|X|}{2}$ 
we have $N_i \subseteq N(v_i)$.
\begin{proof}
To prove the claim, let us first fix $X$ and write $X=\left\{s_1,\dots,s_{\size{X}}\right\}$.
We inductively define sets $M_i$: if there exist  at least $\frac{m}{4n}$ neighbors of $s_1$ in $\overline{X}$, then 
let $M_1$ be the set containing the $\frac{m}{4n}$ neighbours with the lowest index. Otherwise let $M_1=\emptyset$.  Similarly, let then $M_i$ be the set of the $\frac{m}{4n}$ neighbors of $s_i$ in $\overline{X}\setminus\left(\bigcup_{j<i}M_j\right)$ with the lowest index 
provided there are at least $\frac{m}{4n}$ such neighbors, and the empty set otherwise.

Further, call each $s_i$ a \emph{success}, if $M_i\neq\emptyset$ or $\sum_{j=1}^{i-1} d(s_j)\geq \frac{m}{2}$. 
Note that if there are less than $\frac{\size{X}}{2}$ failures, then either (1) the claim holds for $X$, or (2) $X$ has at least $\frac{m}{4}$ 
incident edges. However, the number of edges incident to $X$ has the hypergeometric distribution with parameters 
$F={n\choose 2}$, $f=\size{X}(n-\size{X}) + {|X|\choose 2}$ and $l=m$. For the expectation we have $\frac{|X|m}{n} \leq \frac{fl}{F} \leq \frac{|X|m}{2(n-1)} \leq \frac{m}{8}$, hence by Theorem \ref{thm:hypergeometric} 
the probability of (2) is at most $e^{-m\size{X}/3n}$. 
We show now that with high probability there are less than $\frac{|X|}{2}$ failures.

We go through the $s_i$ in increasing order, and determine the probability of a failure, conditioned under the exact sets of neighbors of the $s_1,\dots,s_{i-1}$. So, fix an $i\leq \size{X}$ and condition on the event that for $j<i$, the neighborhood of $s_j$ is $N_G(s_j)=B_j$ for some fixed sets $B_j$.\\
Now if $\sum_{j<i}\size{B_j}\geq \frac{m}{2}$, then $s_i$ is a guaranteed success. Otherwise, there are $l\geq \frac{m}{2}$ edges left to place in $G$, and 
$F\leq {n\choose 2}$ potential edges to choose from. The ``good'' edges are all the edges from $s_i$ to 
$\overline{X}\setminus \bigcup_{j<i}M_j$, there are at least $f\geq n-|X|-(i-1)\frac{m}{4n}\geq \frac{n}{2}$ of them. 
For the expectation we have $\frac{fl}{F} \geq \frac{m}{2n}$, so by Theorem \ref{thm:hypergeometric} the probability that $s_i$ is a failure, that is, 
that less than $\frac{m}{4n}$ of the good edges are realized, is at most $e^{-m/16n}$.

Thus, by the union bound the probability that there are $\frac{|X|}{2}$ elements of $X$ that are failures is at most
$${\size{X}\choose |X|/2}\left(e^{-m/16n}\right)^{|X|/2}
=e^{-\Omega(m\size{X}/n)}.$$
This means that the probability that the claim does not hold for $X$ is at most $e^{-\Omega(m\size{X}/n)} + e^{-m|X|/3n}$.

Now taking the union bound over all $X$, $|X|\leq \frac{n^2}{m}$, the probability that the event of the claim does not hold is at most
$$\sum_{x=1}^{n/\ln^2{n}}e^{x\ln{n}}e^{-\Omega(mx/n)}=e^{-\Omega\left(\ln n\ln\ln n\right)}.$$
\end{proof}

We prove that part (b) holds with probability at least $1-e^{-n/10}$. 
Let us fix sets $X$ and $N$. The number of edges between $X$ and $[n]\setminus(X\cup N)$ has the hypergeometric distribution with parameters $F={n\choose 2}$, $f=\size{X}\size{[n]\setminus(X\cup N)}\geq \size{X}\frac{n}{4}$ and $l=m$. By Theorem \ref{thm:hypergeometric}, we have that the probability that there are less than 
$\size{X}\frac{m}{8n} \leq \frac{fl}{2F}$ edges between $X$ and $[n]\setminus(X\cup N)$ is at most $e^{-fl/8F}\leq e^{-\size{X}\frac{m}{32n}}\leq e^{-2n}$. Since there 
are at most $2^n2^n$ pairs of sets $X$ and $N$, the probability that the statement fails is at most $e^{\left(2\ln{2}-2\right)n} \leq e^{-n/10}$.

Finally we show that part (c) holds with probability at least $1-e^{-m/129}$.
Let us fix disjoint vertex sets $X,Y\subset [n]$ of size at least $\size{X},\size{Y}\geq \frac{n}{4}$. The number of edges between $X$ and $Y$ follows the hypergeometric distribution with parameters $F={n\choose 2}$, $f=\size{X}\size{Y}\geq \frac{|X|n}{4}$ and $l=m$.  By Theorem \ref{thm:hypergeometric}, we have that the probability that there are less than $\size{X}\frac{m}{8 n} \leq \frac{fl}{2F}$ edges between $X$ and $Y$ is at most 
$e^{-fl/8F}\leq e^{-\size{X}m/32n}\leq e^{-m/128}$. Since there are at most $2^n2^n\leq e^{o(m)}$ pairs of sets $X$ and $Y$, the claim follows by taking the union bound over all such $X$ and $Y$.
\end{proof}

\end{document}